\begin{document}

\headers{}{}

\title{Provably Convergent and Robust Newton--Raphson Method: A New Dawn in Primitive Variable Recovery for Relativistic MHD
	\thanks{
			C.~Cai and J.~Qiu were partially supported by National Key R\&D Program of China (No.~2022YFA1004500). K.~Wu was partially supported by Shenzhen Science and Technology Program (No.~RCJC20221008092757098) and NSFC grant (No.~12171227).}}
	
\author{Chaoyi Cai\thanks{
		School of Mathematical Sciences, Xiamen University, Xiamen, Fujian 361005, China (\email{caichaoyi@stu.xmu.edu.cn}).}
	\and Jianxian Qiu\thanks{School of Mathematical Sciences and Fujian Provincial Key Laboratory of Mathematical Modeling and High-Performance Scientific Computing, Xiamen University, Xiamen, Fujian 361005, China (\email{jxqiu@xmu.edu.cn}).}
	\and Kailiang Wu\thanks{Corresponding author. Department of Mathematics and SUSTech International Center for Mathematics, Southern University of Science and Technology, and National Center for Applied Mathematics Shenzhen (NCAMS), Shenzhen, Guangdong 518055, China (\email{wukl@sustech.edu.cn}).}} 

\maketitle
\begin{abstract}
A long-standing and formidable challenge faced by all conservative numerical schemes for relativistic magnetohydrodynamics (RMHD) equations is the recovery of primitive variables from conservative ones. This process involves solving highly nonlinear equations subject to physical constraints. An ideal solver should be ``robust, accurate, and fast—it is at {\em the heart of all} conservative RMHD schemes,'' as emphasized in [S.C.~Noble {\em et al.}, Astrophys.~J., 641 (2006), pp.~626--637]. Despite over three decades of research, seeking efficient solvers that can provably guarantee stability and convergence remains an open problem. 



This paper presents the first theoretical analysis for designing a robust, physical-constraint-preserving (PCP), and provably (quadratically) convergent Newton--Raphson (NR) method for primitive variable recovery in RMHD. Our key innovation is a unified approach for the initial guess, carefully devised based on sophisticated analysis. It ensures that the resulting NR iteration consistently converges and adheres to physical constraints throughout all NR iterations. Given the extreme nonlinearity and complexity of the iterative function, the theoretical analysis is highly nontrivial and technical. We discover a pivotal inequality for delineating the convexity and concavity of the iterative function and establish general auxiliary theories to guarantee the PCP property and convergence. We also develop theories to determine a computable initial guess within a theoretical ``safe" interval. Intriguingly, we find that the unique positive root of a cubic polynomial always falls within this ``safe" interval. To enhance efficiency, we propose a hybrid strategy that combines this with a more cost-effective initial value. The presented PCP NR method is versatile and can be seamlessly integrated into any RMHD numerical scheme that requires the recovery of primitive variables, potentially leading to a very broad impact in this field. As an application, we incorporate it into a discontinuous Galerkin method, resulting in fully PCP schemes. Several numerical experiments, including random tests and simulations of ultra-relativistic jet and blast problems, demonstrate the notable efficiency and robustness of the PCP NR method.
\end{abstract}




\begin{MSCcodes}
	65M12, 49M15,  76Y05, 35L65
\end{MSCcodes}

\section{Introduction}


In the vast universe, over 99\% of visible matter is in a plasma state. The movement of plasma is governed by magnetohydrodynamics (MHD) equations. Relativistic MHD (RMHD) combines MHD with Einstein's theory of relativity, playing a crucial role in astrophysics and plasma physics, notably in the historic discovery of gravitational waves \cite{ruiz2018gw170817}. RMHD explains plasma behavior in magnetic fields under conditions of near-light speed and/or strong gravitation. It is essential for understanding and predicting phenomena like pulsars, active galactic nuclei, gamma-ray bursts, gravitational waves, jets, and the dynamics near black holes and neutron stars. 


The governing equations for three-dimensional special RMHD can be expressed as a conservative system of hyperbolic conservation laws: 
\begin{equation}
\label{eq:intro_rmhd}
		\frac{\partial \mathbf{U}}{\partial t}+\sum_{i=1}^{3}\frac{\partial \mathbf{F}_i\left(\mathbf{U}\right)}{\partial x_i}={\bf 0},
\end{equation}
accompanied by an additional divergence-free constraint $\nabla \cdot {\mathbf B} = 0$ on the magnetic field $\mathbf B=(B_1,B_2,B_3)^\top$. In \eqref{eq:intro_rmhd}, 
the conservative vector is defined as 
	$\mathbf{U}=\left(D,\mathbf{m}^\top,\mathbf{B}^\top,E\right)^\top$, 
where $D$ represents the mass density, $\mathbf{m} = (m_1, m_2, m_3)^\top$ is the momentum density vector, and $E$ denotes the energy density. The flux vectors in \eqref{eq:intro_rmhd} are given by
\begin{equation}\label{eq:intro_flucvec}
	\mathbf{F}_i\left(\mathbf{U}\right)=\left(Dv_i,v_i\mathbf{m}^\top-B_i(W^{-2}\mathbf{B}^\top+(\mathbf{v}\cdot\mathbf{B})\mathbf{v}^\top)+p_{tot}\mathbf e_i,v_i\mathbf{B}^\top-B_i\mathbf{v}^\top,m_i\right)^\top,
\end{equation}
where $\mathbf{e}_i$ represents the $i$th row of the unit matrix of size 3, $\mathbf{v} = (v_1, v_2, v_3)^\top$ is the fluid velocity vector, $\rho$ denotes the rest-mass density, and $p_{tot} = p + \frac{1}{2}(W^{-2}|\mathbf{B}|^2 + (\mathbf{v} \cdot \mathbf{B})^2)$ signifies the total pressure with $p$ as the thermal pressure. The Lorentz factor $W = (1 - |\mathbf{v}|^2)^{-\frac{1}{2}}$, and the velocity is normalized so that the speed of light  equals one. 

 
Define 
	$\mathbf Q=\left(\rho,\mathbf v^\top,\mathbf B^\top,p\right)^\top$ 
as the primitive variables. 
The conservative vector $\mathbf U$ is explicitly derivable from $\mathbf Q$ through the relations:
\begin{equation}\label{express:Prim2Conserv}
	\begin{cases}
		D=\rho W,\\
		\mathbf m=\rho h W^2\mathbf{v}+|\mathbf B|^2\mathbf{v}-(\mathbf v\cdot \mathbf B)\mathbf B,\\
		E=\rho h W^2-p_{tot}+|\mathbf B|^2,\\
	\end{cases}
\end{equation}
where the specific enthalpy $h$ is determined by $\rho$ and $p$ via an equation of state (EOS): 
\begin{equation}\label{express:enthalpy}
	h = {\mathcal H}(\rho,p),
\end{equation}
for example, the $\gamma$-law EOS with the adiabatic index $\gamma\in(1,2]$ is given by 
\begin{equation}\label{eq:gamma-EOS}
	h={\mathcal H}(\rho,p)=1+{\gamma p}/{((\gamma-1)\rho)}. 
\end{equation}
However, the (inverse) calculation of $\mathbf Q$ from $\mathbf U$ is challenging, as $\mathbf Q$ {\em cannot be explicitly expressed} by $\mathbf U$ due to the strong nonlinear coupling in \eqref{express:Prim2Conserv}.

The complex and nonlinear nature of the RMHD equations \eqref{eq:intro_rmhd} demands the development of advanced numerical schemes for effective RMHD studies. 
Unfortunately, the flux $\mathbf F_i(\mathbf U)$, evaluated at each time step in all computational cells, is a highly nonlinear {\em implicit} function of ${\bf U}$, preventing its direct calculation from $\mathbf U$.  
Therefore, it is imperative to determine the primitive variables $\mathbf{Q}$ from $\mathbf{U}$ before $\mathbf{F}_i$ can be computed. 
{\em A common and intricate challenge faced by all conservative RMHD schemes is to recover  $\mathbf Q$ from $\mathbf U$.}
This procedure is highly complicated, due to the absence of an explicit, closed-form expression.  
Specifically, given a known $\mathbf{U}=\left(D,\mathbf{m}^\top,\mathbf{B}^\top,E\right)^\top$, the core goal is to solve the nonlinear algebraic system  \eqref{express:Prim2Conserv} for the five unknown primitive variables $\{\rho, p, v_1,v_2,v_3\}$, while adhering to the physical constraints: 
\begin{equation}\label{eq:Qphysical}
	\rho>0,~~p>0,~~|\mathbf{v}|=\sqrt{v_1^2+v_2^2+v_3^2}<1.
\end{equation}
As highlighted by Charles Gammie et al.~in \cite{noble2006primitive,gammie2003harm}, an ideal solver for primitive variable recovery should be ``robust, accurate, and fast---it is at the heart of all conservative RMHD schemes''.  
{\em Yet, for over three decades,  the quest for such solvers has been a significant, ongoing challenge.} 
While several solvers are available, ``{\em none has proven completely reliable},'' as pointed out in \cite{kastaun2021robust}. 
Furthermore, the theory on the stability and convergence of primitive variable solvers remains very limited. As mentioned in \cite{siegel2018recovery}, the primitive variable recovery problem {\em``still remains a major source of error, failure, and inefficiency"} in RMHD simulations.
 

\subsection{Related work}
 In the past thirty years, the development of solvers for primitive variables in RMHD has attracted considerable attention. This has led to a variety of iterative schemes, as detailed in, e.g.,  \cite{10.1046/j.1365-8711.1999.02244.x,balsara2001total,gammie2003harm,del2003efficient,noble2006primitive,anton2006numerical,mignone2006hllc,mignone2007equation,cerda2008new,newman2014primitive,palenzuela2015effects} and a systematic review in  \cite{siegel2018recovery}. 
Despite these advancements, many solvers still struggle with inaccuracy, instability, divergence, failure, and/or non-compliance with constraints \eqref{eq:Qphysical}, particularly in the ultra-relativistic and/or strongly magnetized scenarios \cite{siegel2018recovery}.


The Newton--Raphson (NR) method, a renowned iterative technique for solving algebraic equations, is often employed for primitive variable recovery.  
A straightforward strategy is to apply the NR method directly to solve  \eqref{express:Prim2Conserv} for the five unknown variables $\{\rho, p, {\bf v}\}$. 
This approach, termed the ``5D-NR'' method in literature, was first used in \cite{balsara2001total,gammie2003harm}. 
However, studies \cite{del2003efficient,noble2006primitive,siegel2018recovery} have noted the 5D-NR method is slow, inaccurate, and often unstable.  
To develop more efficient solvers, researchers have explored dimensionality reduction of \eqref{express:Prim2Conserv}; see, e.g,  \cite{10.1046/j.1365-8711.1999.02244.x,del2003efficient,noble2006primitive,anton2006numerical,mignone2006hllc,mignone2007equation,cerda2008new,palenzuela2015effects}.  
The primary idea is to carefully reformulate the five equations \eqref{express:Prim2Conserv} into a system of fewer equations with fewer unknown intermediate variables. 
These intermediate variables, once computed by a root-finding algorithm like the NR method for the reduced system, are then used to explicitly calculate all the primitive variables. 
Komissarov \cite{10.1046/j.1365-8711.1999.02244.x} initially reduced  \eqref{express:Prim2Conserv} into a  three-equation system involving three unknowns ($p$, $W$, and $\mathbf v\cdot\mathbf B$), solvable by a 3D-NR method.  
Ant{\'o}n et al.~\cite{anton2006numerical} observed that the system \eqref{express:Prim2Conserv} can be reduced to 
\begin{equation}\label{express:Prim2Conserv_2D}
	\begin{cases}
		|{\bf m}|^2 = (\xi +|{\bf B}|^2)^2 \frac{W^2-1}{W^2} - (2 \xi +|{\bf B}|^2) \frac{ ({\bf m}\cdot {\bf B})^2 }{\xi^2}, \\
		E= \xi + |{\bf B}|^2 - p - \frac{ |{\bf B}|^2 }{2W^2} - \frac{ ({\bf m}\cdot {\bf B})^2 }{\xi^2},
		\\
		D = \rho W, 
\end{cases}
\end{equation}
which can be solved by a 3D-NR method for the intermediate variables $\rho$, $p$, and $W$. Here $\xi = \rho h W^2$. 
Researchers 
further reduced the dimensionality to obtain two-equation systems for different unknown intermediate variables, leading to various 2D-NR methods. 
For example, Noble et al.~\cite{noble2006primitive} derived two equations for 
the intermediate variables $|{\bf v}|^2$ and $\xi$. 
Giacomazzo and Rezzolla 
\cite{giacomazzo2007whiskymhd} noticed $\xi = \rho h (\rho,p) W^2 = \frac{D}{W} h( \frac{D}{W}, p ) W^2$, enabling the first two equations in \eqref{express:Prim2Conserv_2D}  to form a two-equation system for  $p$ and $W$. 
Cerd\'a-Dur\'an et al.~\cite{cerda2008new} reformulated \eqref{express:Prim2Conserv_2D} into two equations for $\xi$ and $W$. 

In \cite{noble2006primitive}, Noble et al.~proposed a 1D-NR method by deriving an implicit equation for the 
intermediate variable $\xi$, where the inversion of EOS was required. 
Mignone et al.~\cite{mignone2006hllc,mignone2007equation} introduced a similar 1D-NR method for solving the following nonlinear equation (taking the $\gamma$-law EOS as example) for the unknown $\xi$: 
\begin{equation}\label{equ:fu_exa}
	{\mathcal F}(\xi):=\xi-\frac{\gamma-1}{\gamma}\left(\frac{\xi}{\mathcal W^2}-\frac{D}{\mathcal W}\right)+|{\bf B}|^2-\frac{1}{2}\left(\frac{|{\bf B}|^2}{\mathcal W^2}+\frac{({\bf m}\cdot {\bf B})^2}{\xi^2}\right)-E=0,
\end{equation}
where $\mathcal W$ is a function of $\xi$ defined by 
\begin{equation}\label{eq:DefWxi}
	\mathcal W(\xi)=\frac{\xi(\xi+|{\bf B}|^2)}{f_{a}(\xi)^{\frac12}} \quad \mbox{with}\quad  f_{a}(\xi):=\xi^2(\xi+|{\bf B}|^2)^2-(\xi^2 |{\bf m}|^2+(2\xi+|{\bf B}|^2)({\bf m}\cdot {\bf B})^2). 
\end{equation}
Once $\xi$ is determined, the primitive variables can then be explicitly calculated. 

Although the recovery of primitive variables can be reduced to a 1D root-finding problem, designing  
a robust 1D-NR solver remains challenging due to the high complexity of the resulting nonlinear equation, as illustrated for the $\gamma$-law EOS in \eqref{equ:fu_exa}. 
The stability and convergence of NR method heavily depend on the initial guess. 
As noted in \cite{siegel2018recovery}, all existing NR solvers for primitive variables ``do not guarantee convergence''. 
Typically, these solvers fail in scenarios involving strongly magnetized fluids or large Lorentz factors \cite{siegel2018recovery}. 
Figure \ref{fig:fu_exa} illustrates a potential failure case where an improperly chosen initial guess $\xi_0$  leads to failure, and our numerical evidence indicates that such failures can even occur with initial guesses close to the true root.

\begin{figure}[!t]
	\centering
	\includegraphics[width=0.5\textwidth]{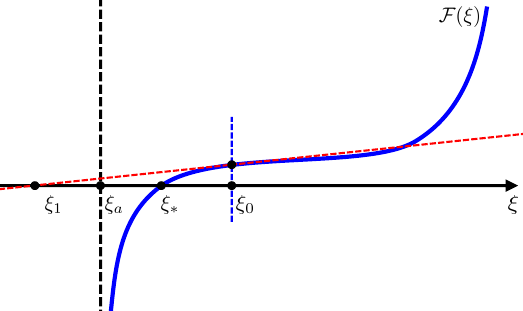}
	\caption{A potential failure scenario in solving \eqref{equ:fu_exa} using the NR method. When $\xi_1 \le \xi_a$, $f_a(\xi_1)\le 0$ so that the values of $\mathcal W(\xi_1)$ and ${\mathcal F}(\xi_1)$ become complex or ill-defined, resulting in  failure. Here $\xi_a$ is the largest non-negative root of $f_a(\xi)$.}\label{fig:fu_exa}
\end{figure}


Yet, finding a unified approach to determine initial guesses that consistently guarantee the robustness and convergence of NR primitive variable solvers remains an open problem. 
A prevalent strategy in the literature is to take the initial guess from the previous time step in numerical evolution  \cite{gammie2003harm,cerda2008new,noble2006primitive,siegel2018recovery}.  
However, its effectiveness is unpredictable and challenging to evaluate \cite{kastaun2021robust}, as the initial guess is not chosen in a deterministic way.  
While this strategy might provide a good initial estimate in cases of smooth solutions and small time step size, it can lead to significant discrepancies from the true root if the time step is not adequately small or the solution exhibits discontinuities. 
The discrepancies can result in divergence or failure, necessitating manual intervention, e.g,  reducing the time step size \cite{newman2014primitive}. Mignone et al.~\cite{mignone2007equation} proposed a more robust initial guess for their 1D-NR method, but a comprehensive theoretical analysis of stability and convergence for this and other NR algorithms remains absent.

In addition to NR-based solvers, several alternative methods were developed for recovering primitive variables in RMHD. For instance, Newman et al.~\cite{newman2014primitive} introduced a fixed-point iteration method with Aitken acceleration for recovering the pressure $p$. 
However, as mentioned in \cite{newman2014primitive}, the expressions involved this method ``are too complicated to provide algebraic proofs of convergence''. 
Another category of solvers utilizes Brent's method \cite{neilsen2014magnetized, palenzuela2015effects}, which combines root bracketing, bisection, and inverse quadratic interpolation. The convergence order of Brent's method switches between 1 (the order of bisection) and 1.839 (the order of inverse quadratic interpolation), both lower than the quadratic convergence order of NR method. Recently, Kastaun et al.~\cite{kastaun2021robust} reformulated the recovery of primitive variables into finding the root of a master function, proved its existence and uniqueness, and employed a robust Brent-type algorithm.
As evidenced in \cite{siegel2018recovery}, while the NR method typically offers greater efficiency and higher accuracy, it is less robust compared to fixed point or Brent's method. To balance efficiency and robustness, Siegel et al.~\cite{siegel2018recovery} suggested starting with an efficient and accurate NR method and, upon its failure, switching to a more robust yet slower algorithm, such as fixed-point iteration \cite{newman2014primitive} or Brent's method \cite{neilsen2014magnetized, palenzuela2015effects}. 
Beyond efficiency, accuracy, and convergence, it is crucial to ensure that the recovered primitive variables  conform to physical constraints \eqref{eq:Qphysical}. Regrettably, many existing primitive variable solvers in RMHD do not always adhere to these constraints. The resulting nonphysical primitive variables make the discrete problems ill-posed and lead to the failure of simulations.  
In recent years, a series of physical-constraint-preserving (PCP) numerical schemes were  proposed for relativistic hydrodynamics  \cite{2015High,wu2017design,WuTang2017ApJS,wu2021minimum,chen2022physical} and RMHD \cite{wu2017admissible,wu2018physical,wu2021provably,wu2021geometric}.  
These PCP schemes, developed via the geometric quasilinearization approach \cite{wu2021geometric}, have been rigorously proven to ensure that the computed conservative variables are physically admissible and comply with the relevant physical constraints. 
 While the PCP property on conservative variables implies the existence and uniqueness of the corresponding physical primitive variables in theory \cite{wu2017admissible, wu2018physical, wu2021provably}, it  however  does not ensure the convergence of primitive variable solvers.  Moreover, the PCP property on conservative variables alone does not necessarily guarantee that the physical constraints \eqref{eq:Qphysical} are met by the numerically computed primitive variables. 
 This highlights a critical gap in current methodologies and underscores the need for further research of fully PCP schemes for robust RMHD simulations. 


\subsection{Contributions and innovations of this paper}
This paper aims to analyze and develop a robust, PCP\footnote{An iterative primitive variable solver is termed PCP if its approximate primitive variables always satisfy the physical constraints \eqref{eq:Qphysical} throughout the iterations.}, and provably convergent NR primitive variable solver. The contributions, innovations, and significance of this work include:

\begin{itemize}[leftmargin=*]
	\item We introduce a robust and efficient NR method for RMHD, building on the 1D-NR methods by Noble et al.~\cite{noble2006primitive} and Mignone et al.~\cite{mignone2006hllc}. Our key innovation is a unified approach for the initial guess, designed based on systematic theoretical analysis. Our design ensures that the NR iteration provably converges and consistently adheres to physical constraints \eqref{eq:Qphysical} throughout the iteration process. To our knowledge, this may be the first proven convergent PCP NR method for RMHD.

	\item The PCP NR method boasts a rapid, provably quadratic convergence rate, due to the iterative function being proven strictly increasing and thus always having a unique (non-repeated) positive root. Empirical evidence shows the NR method's swift convergence to near machine accuracy within five iterations on average.


	\item We establish rigorous mathematical theories to analyze the convergence and stability of the PCP NR method. 
	We propose three auxiliary lemmas for analyzing the convergence of NR method in a generic nonlinear equation, forming the foundation of our analysis. 
	We construct a crucial inequality, which is essential for analyzing the convexity and concavity structure of the iterative function and for proving the PCP property and convergence of our NR method for the $\gamma$-law EOS \eqref{eq:gamma-EOS}. 
	Our theories establish a ``safe" interval for the initial guess that consistently ensures the provable convergence and PCP property of the NR method.

	\item We further derive theories for determining a computable initial value within the theoretical ``safe" interval. Notably, we discover that the unique positive root of a cubic polynomial always lies within the ``safe" interval, and derive a real analytical expression for this root. We propose a hybrid strategy for further enhancing the efficiency.  While our primary focus is the PCP NR method, our theoretical findings extend beyond this specific approach and can broadly apply to the development of other PCP convergent solvers, such as the bisection or Brent's algorithms, for robust recovery of RMHD primitive variables.

\item The PCP NR method is versatile and can be seamlessly integrated into any RMHD numerical scheme that requires the recovery of primitive variables. As an application, we have successfully integrated it into the PCP discontinuous Galerkin (DG) schemes from \cite{wu2017admissible,wu2021provably}, which were proven to maintain the physicality of the computed conservative variables. The PCP NR method guarantees that the recovered primitive variables are always physical. This integration leads to the fully PCP schemes, ensuring all the computational processes in RMHD adhere to physical constraints \eqref{eq:Qphysical}. 
 We implement the PCP NR method and integrated PCP NR-DG schemes, and conduct extensive numerical experiments to demonstrate the notable efficiency and robustness of the PCP NR method compared to six other primitive variable solvers.

\end{itemize}

The paper is structured as follows: Section \ref{sec:whole algor} introduces the PCP convergent NR method, Section \ref{sec:primitive variables recovering algorithms} analyzes its convergence and PCP properties, Section \ref{sec:numerical_tests} provides numerical tests, and Section \ref{sec:conclusion} summarizes the concluding remarks. 


\section{PCP Newton--Raphson Method}\label{sec:whole algor}

This section presents an efficient and highly robust NR method for determining primitive variables in RMHD. The method builds upon the 1D-NR methods of Noble et al.~\cite{noble2006primitive} and Mignone et al.~\cite{mignone2006hllc}. Our key innovation is a unified approach for the initial guess, which is carefully designed based on sophisticated theoretical analysis. This design ensures that the resulting NR iteration is provably convergent and consistently maintains the physical constraints \eqref{eq:Qphysical} throughout the entire NR iteration process. The rigorous proofs of convergence and stability of the NR method are highly technical and will be discussed in Section \ref{sec:primitive variables recovering algorithms}.

In this section, we consider a general causal EOS \eqref{express:enthalpy} satisfying 
\begin{equation}\label{eq:EOScond}
	\begin{cases}
		\mbox{The function ${\mathcal H}(\rho,p)$ in \eqref{express:enthalpy} is differentiable in $\mathbb R^+\times \mathbb R^+$}, &
		\\
		{\mathcal H}(\rho,p) \ge \sqrt{1+p^2/\rho^2}+p/\rho \qquad \qquad \quad~ \mbox{for all } p, \rho>0,
		\\
		{\mathcal H}(\rho,p) \left(\frac1{\rho} - \frac{\partial {\mathcal H}(\rho,p)}{\partial p} \right) < \frac{\partial {\mathcal H}(\rho,p)}{\partial \rho} < 0 \qquad \mbox{for all } p, \rho>0,
		\\
		\lim\limits_{p\to 0^+} {\mathcal H}(\rho,p)  = 1  \qquad  \mbox{for all } \rho>0, 
	\end{cases}
\end{equation}
where the second condition is revealed by relativistic kinetic theory, while the third arises from relativistic causality and the premise that the coefficient of thermal expansion for fluids is positive \cite{WuTang2017ApJS}. 
These assumptions are reasonable and valid for most compressible fluids including gases.  
In particular, they are satisfied by the $\gamma$-law EOS \eqref{eq:gamma-EOS} and several other commonly used EOSs, as shown in \cite{WuTang2017ApJS,wu2018physical}.

\subsection{Physically admissible states}

\begin{Def}
	A conservative vector $\mathbf{U}$ is termed physically admissible if it corresponds to a unique primitive vector $\mathbf{Q} = \left(\rho, \mathbf{v}^\top, \mathbf{B}^\top, p\right)^\top$ that satisfies constraints \eqref{eq:Qphysical}.
\end{Def}

Under the assumptions \eqref{eq:EOScond}, Wu and Tang \cite{wu2017admissible,wu2018physical} derived a sufficient and necessary condition for assessing the admissibility of conservative variables; see Theorem \ref{Thm:WuTang}.

\begin{theorem}[\cite{wu2017admissible,wu2018physical}]\label{Thm:WuTang}
	Consider a general EOS satisfying \eqref{eq:EOScond}. 
	A conservative vector $\mathbf{U}$ is physically admissible, if and only if $\mathbf{U}$ belongs to the following set 
	\begin{equation}\label{express:Uphysical}
		\mathcal{G}=\left\{ \mathbf{U}=\left(D,\mathbf{m}^\top,\mathbf{B}^\top,E\right)^\top:~   D>0,~E-\sqrt{D^2+|{\bf m}|^2}>0,~\Psi(\mathbf U)>0\right\},
	\end{equation}
	where
	\begin{align*}
		\Psi(\mathbf U) &:=(\Phi(\mathbf U)-2(|{\bf B}|^2-E))\sqrt{\Phi(\mathbf U)+|{\bf B}|^2-E}-\sqrt{\frac{27}{2}(D^2|{\bf B}|^2+ ({\bf m}\cdot {\bf B})^2)},
		\\
		\Phi(\mathbf U) &:=\sqrt{(|{\bf B}|^2-E)^2+3(E^2-D^2-|{\bf m}|^2)}. 
	\end{align*}
\end{theorem}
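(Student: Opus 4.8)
The plan is to establish the claimed characterization of $\mathcal{G}$ by reducing the recovery of $\mathbf{Q}$ from $\mathbf{U}$ to a one-dimensional root-finding problem and then showing that solvability of that problem, within the physically meaningful range, is encoded exactly by the three inequalities in \eqref{express:Uphysical}. Concretely, I would set $\xi:=\rho h W^2$ and exploit the momentum relation in \eqref{express:Prim2Conserv}: dotting it with $\mathbf{B}$ gives $\mathbf{m}\cdot\mathbf{B}=\xi\,(\mathbf{v}\cdot\mathbf{B})$, hence $\mathbf{v}=(\xi+|\mathbf{B}|^2)^{-1}\!\left(\mathbf{m}+\xi^{-1}(\mathbf{m}\cdot\mathbf{B})\mathbf{B}\right)$, so that $|\mathbf{v}|^2$, the Lorentz factor $W=\mathcal{W}(\xi)$ from \eqref{eq:DefWxi}, and $\rho=D/W$ all become explicit functions of $\xi$; in particular $|\mathbf{v}|<1$ forces $f_a(\xi)>0$, and $p>0$ is equivalent to $h=\xi/(DW)>1$ (using that $\mathcal{H}$ is strictly increasing in $p$, which follows from \eqref{eq:EOScond}), i.e.\ to $\xi>D\,\mathcal{W}(\xi)$. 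Substituting into the energy relation and inverting the EOS for $p$ yields a scalar master function $\mathcal{F}$ --- of the form \eqref{equ:fu_exa} in the $\gamma$-law case --- whose zeros in the admissible interval $\mathcal{I}:=\{\xi:f_a(\xi)>0,\ \xi>D\,\mathcal{W}(\xi)\}$ are in bijection with physically admissible recoveries $\mathbf{Q}$. The theorem then reduces to proving: $\mathcal{F}$ has exactly one zero in $\mathcal{I}$ if and only if $D>0$, $E>\sqrt{D^2+|\mathbf{m}|^2}$, and $\Psi(\mathbf{U})>0$.

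For the necessity direction, suppose a physical $\mathbf{Q}$ exists. Then $D=\rho W>0$ is immediate. Substituting \eqref{express:Prim2Conserv}, using $W^2|\mathbf{v}|^2=W^2-1$, and invoking the sharp kinetic bound $\mathcal{H}(\rho,p)\ge\sqrt{1+p^2/\rho^2}+p/\rho$ from \eqref{eq:EOScond} (which gives $h>1$ and, after rationalizing the square roots, the strict inequality $\rho W(h^2-1)>p\,(hW+\sqrt{h^2W^2-h^2+1})$ whenever $p>0$) yields $E-\sqrt{D^2+|\mathbf{m}|^2}>0$, and hence $E^2-D^2-|\mathbf{m}|^2>0$ so that $\Phi(\mathbf{U})$ is real. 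The third inequality $\Psi(\mathbf{U})>0$ is the delicate one: I would eliminate all remaining variables in favor of a single scalar (for instance $|\mathbf{v}|^2$ or a monotone function of $W$) and recast ``$p>0$ is attainable'' as positivity of the maximum of a cubic polynomial $P$ over the relevant range; since $\Phi(\mathbf{U})^2=(|\mathbf{B}|^2-E)^2+3(E^2-D^2-|\mathbf{m}|^2)$ is exactly the quantity whose square root locates the critical points $\big(E-|\mathbf{B}|^2\pm\Phi(\mathbf{U})\big)/3$ of such a cubic, computing the binding extreme value of $P$ produces precisely the threshold $\big(\Phi(\mathbf{U})-2(|\mathbf{B}|^2-E)\big)\sqrt{\Phi(\mathbf{U})+|\mathbf{B}|^2-E}>\sqrt{\tfrac{27}{2}\big(D^2|\mathbf{B}|^2+(\mathbf{m}\cdot\mathbf{B})^2\big)}$, i.e.\ $\Psi(\mathbf{U})>0$. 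As a consistency check, in the pure-hydrodynamic limit $\mathbf{B}=\mathbf{0}$ this collapses to $\Phi(\mathbf{U})=\sqrt{4E^2-3(D^2+|\mathbf{m}|^2)}>E$, again equivalent to $E>\sqrt{D^2+|\mathbf{m}|^2}$, so $\mathcal{G}$ correctly degenerates to $\{D>0,\ E>\sqrt{D^2+|\mathbf{m}|^2}\}$.

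For sufficiency, assume the three inequalities. I would first check that they force $\mathcal{I}\ne\emptyset$ and that a sign change of $\mathcal{F}$ occurs inside the portion of $\mathcal{I}$ where the recovered pressure is positive --- this is exactly where $\Psi(\mathbf{U})>0$ enters, through the cubic $P$ exceeding the coupling threshold built from $D^2|\mathbf{B}|^2+(\mathbf{m}\cdot\mathbf{B})^2$. A sign/limit analysis of $\mathcal{F}$ on $\mathcal{I}$ then gives the required sign change ($E>\sqrt{D^2+|\mathbf{m}|^2}$ controlling the behavior near the left endpoint of $\mathcal{I}$ and $\mathcal{F}(\xi)\to+\infty$ as $\xi\to\infty$), so the intermediate value theorem yields a root; uniqueness follows from strict monotonicity of $\mathcal{F}$ on $\mathcal{I}$ --- for the $\gamma$-law EOS this is the monotonicity of the iterative function proved later in this paper, and for a general EOS one argues it from the structural inequalities in \eqref{eq:EOScond}. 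Finally $\rho=D/W>0$, $p>0$, and $|\mathbf{v}|^2<1$ (from $f_a(\xi)>0$) are read off, so the recovered $\mathbf{Q}$ satisfies \eqref{eq:Qphysical}.

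The main obstacle is pinning down the third condition: carrying out the elimination that turns attainability of $p>0$ into positivity of a cubic over a prescribed interval, correctly identifying which critical point or endpoint is binding (this splits according to the sign of $E-|\mathbf{B}|^2$ and requires careful tracking of $(\mathbf{m}\cdot\mathbf{B})^2$ together with $D^2|\mathbf{B}|^2$), and verifying that the binding extreme value collapses to exactly the combination defining $\Psi(\mathbf{U})$, including the factor $\sqrt{27/2}$ and the square root $\sqrt{\Phi(\mathbf{U})+|\mathbf{B}|^2-E}$. Establishing uniqueness of the recovery for a general EOS satisfying \eqref{eq:EOScond}, rather than only the $\gamma$-law, is a secondary technical point that would also need care.
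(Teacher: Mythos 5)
First, a point of comparison that matters: the paper does not prove this theorem at all --- it is imported verbatim from \cite{wu2017admissible,wu2018physical}, so there is no in-paper argument to measure your proposal against. Your outline does follow the same overall strategy as those cited works: reduce the recovery to a scalar root-finding problem for $\xi=\rho hW^2$, identify $|\mathbf v|<1$ with $f_a(\xi)>0$ and $p>0$ with $\xi>D\,\mathcal W(\xi)$, and encode solvability of ${\mathcal F}(\xi)=0$ in the admissible range by the three inequalities. The consistency checks you run (the hydrodynamic limit $\mathbf B=\mathbf 0$, the role of the kinetic bound in forcing $E>\sqrt{D^2+|\mathbf m|^2}$) are correct and are indeed how the cited derivation is organized.

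As a proof, however, the proposal has genuine gaps, and the largest one is the one you flag yourself. The entire content of the third condition is the elimination that converts ``a root of $\mathcal F$ with positive recovered pressure exists'' into ``a certain cubic exceeds a threshold at its binding extremum,'' and this is never carried out. Worse, the structure you guess is not quite right: the critical points $\bigl(E-|\mathbf B|^2\pm\Phi(\mathbf U)\bigr)/3$ you propose do not match the quantity that actually appears, since $\xi_d=\tfrac13\bigl(\Phi(\mathbf U)+2(E-|\mathbf B|^2)\bigr)$ as in \eqref{eq:xi_d}, and the cubic $f_c$ of \eqref{eq:f3} has its critical points at $0$ and $-\tfrac23(|\mathbf B|^2-E)$, not at $\xi_d$; the identity one actually needs is $\Psi(\mathbf U)>0\iff f_c(\xi_d)>0$, where $\xi_d$ is the extremizer of a \emph{different} cubic (with quadratic coefficient $2(|\mathbf B|^2-E)$) arising from the energy constraint. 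Without pinning down which polynomial is being extremized where, the factor $\sqrt{27/2}$ and the radical $\sqrt{\Phi+|\mathbf B|^2-E}$ cannot be verified, and the theorem is not proved. A second gap is uniqueness for a general EOS: the Definition of admissibility requires a \emph{unique} physical $\mathbf Q$, and you dispose of this by asserting strict monotonicity of $\mathcal F$ ``from the structural inequalities in \eqref{eq:EOScond}.'' That monotonicity is itself a nontrivial result (it is exactly the third bullet of Theorem \ref{thm:m3As}, again only cited here), so it cannot be waved through. In short: right route, matching the source papers, but incomplete at precisely the two places where the real work lies.
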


Ensuring that the computed conservative variables are physically admissible is an important, albeit separate, topic; see the PCP numerical schemes developed in \cite{wu2017admissible,wu2018physical,wu2021provably}. Throughout this paper, we always assume that the given conservative vectors are physically admissible and only focus on the NR method that accurately and robustly recovers the corresponding primitive variables.

\subsection{PCP NR method}
Assume that the EOS $h={\mathcal H}(\rho,p)$ can be rewritten as $p={\mathcal P}(\rho,h)$. 
Given a physically admissible state $\mathbf{U}=\left(D,\mathbf{m}^\top,\mathbf{B}^\top,E\right)^\top \in {\mathcal G}$, 
our objective is to recover the associated primitive variables $\mathbf Q=\left(\rho,\mathbf v^\top,\mathbf B^\top,p\right)^\top$. 
Define 
\begin{align}\label{key331}
	&m := |{\bf m}|, \quad B := |{\bf B}|, \quad 
	\tau := \mathbf m\cdot\mathbf B, \quad 
	\alpha_1:=B^2-E,\quad \alpha_2=B^2-m,
	\\ \label{key331b}
	&	\eta=\xi+B^2, \quad \beta_1=		
	\begin{cases}
		\tau^2/B^2,&\text{if}~B\neq0,\\
		0,&\text{if}~B=0, 
	\end{cases} \quad \beta_2=m^2-\beta_1.
\end{align}
Motivated by \cite{noble2006primitive,mignone2006hllc}, we first compute the unknown intermediate variable  $\xi := \rho h W^2$ by finding the positive root of the nonlinear function
\begin{equation}\label{equ:fu general0}
	{\mathcal F}(\xi) :=\xi-{\mathcal P}\left(\frac{D}{\mathcal W},\frac{\xi}{D\mathcal W}\right)-\frac{1}{2}\left(\frac{B^2}{\mathcal W^2}+\frac{\tau^2}{\xi^2}\right) +\alpha_1,   
\end{equation} 
where $\mathcal W(\xi)$ is defined in \eqref{eq:DefWxi}. For the computational stability in ultra-relativistic and strongly magnetized cases, we carefully reformulate $\mathcal W(\xi)$ as 
\begin{equation}\label{fu:express2}
		\mathcal W(\xi)=\left({\frac{(\xi+\alpha_2)(\eta+m)}{\eta^2}+\beta_1\left(\frac{1}{\eta^2}-\frac{1}{\xi^2}\right)}\right)^{-\frac12}. 
\end{equation}
The reformulation \eqref{fu:express2} helps mitigate the effect of 
round-off errors in large-scale cases, as detailed in Appendix \ref{sec:detail}. 
The derivative of ${\mathcal F}(\xi)$ is given by 
\begin{equation}
	{\mathcal F}'(\xi)=1+B^2\varphi_a+\frac{\tau^2}{\xi^3}+ {\mathcal P}_\rho    \left(\frac{D}{\mathcal W},\frac{\xi}{D\mathcal W}\right)D\mathcal W \varphi_a+\frac{1}{D} {\mathcal P}_h \left(\frac{D}{\mathcal W},\frac{\xi}{D\mathcal W}\right)\left(\xi  \mathcal W \varphi_a-\frac{1}{\mathcal W}\right),
\end{equation}
where 
$\mathcal W$ is calculated by \eqref{fu:express2}, $\varphi_a=-\left(\frac{\beta_1}{\xi^3}+\frac{\beta_2}{\eta^3}\right)$, and 
${\mathcal P}_\rho $ and ${\mathcal P}_h$ denote the partial derivatives of ${\mathcal P}(\rho,h)$ with respect to $\rho$ and $h$, respectively.

Our robust PCP NR method for computing $\xi$ proceeds as follows: 
\begin{equation}\label{eq:NR}
	\xi_{n+1} = \xi_n - \frac{ {\mathcal F}(\xi_n) }{ {\mathcal F}'(\xi_n) }, \qquad n=0,1,2,\dots
\end{equation}
with the initial guess defined as 
\begin{equation}\label{eq:initial}
	\xi_0 = \begin{cases}
		\xi_d, & {\mathcal F}(\xi_d) \le 0,
		\\
		\xi_c, & \mbox{otherwise},
	\end{cases}
\end{equation}
where 
\begin{equation}\label{eq:xi_d}
	\xi_d:=\frac{1}{3} \Big( \Phi({\bf U}) - 2 (B^2 - E) \Big) = \frac{1}3 \Big( \sqrt{\alpha_1^2+3(E^2-(D^2+m^2))}-2\alpha_1 \Big),
\end{equation}
and $\xi_c$ is the unique positive root (see Lemma \ref{lem:f3oneroot}) of  
the cubic polynomial 
\begin{equation}\label{eq:f3}
	f_c(\xi):=\xi^3+(B^2-E)\xi^2-\frac{B^2D^2+\tau^2}{2}. 
\end{equation}
In our computations, $\xi_c$ is calculated by
			\begin{equation}\label{eq:xi_c}
			\xi_c=
\begin{cases}
	-\frac{\alpha_1}{3}\left(1-2\cos\left(\frac{\theta}{3}-\frac{\pi}{3}\right)\right), &\text{if } \delta>0,\\
	-\frac13{\left(\alpha_1+\sqrt[3]{X_1+X_2}+\sqrt[3]{X_1-X_2}\right)}, &\text{if } \delta \le 0,
\end{cases}
\end{equation}
where $\delta=27a_0+4 \alpha_1^3$, $\theta=\arccos\left(1+\frac{13.5a_0}{\alpha_1^3}\right)$, $a_0=-\frac12(B^2D^2+\tau^2)$, $X_1=\alpha_1^3+13.5a_0$, and $X_2=1.5\sqrt{3a_0 \delta}$. Note that \eqref{eq:xi_c} does not involve any complex numbers. 

After a sufficient number of NR iterations \eqref{eq:NR}, we obtain an accurately approximate value of $\xi$,  then the primitive variables can be sequentially calculated by 
\begin{align*}
	\mathbf v =\frac{\mathbf m+\xi^{-1} \tau \mathbf B}{\xi+B^2},\quad 
	W=\frac{1}{\sqrt{1-|\mathbf v|^2}},\quad 
	\rho =\frac{D}{W},\quad 
	p = {\mathcal P}\left(\frac{D}{W},\frac{\xi}{DW}\right).
\end{align*}
To facilitate easy implementation of the aforementioned PCP NR method by interested users, we have detailed the pseudocode in Algorithm \ref{algor:2} for the $\gamma$-law EOS.

\begin{remark}
	The above PCP NR method exhibits many remarkable properties. 
	First, its convergence is guaranteed, thanks to our carefully devised initial guess \eqref{eq:initial}. 
	We will provide a rigorous proof of the convergence in Section \ref{sec:primitive variables recovering algorithms} for the $\gamma$-law EOS and validate it for various EOSs through extensive tests  in Section \ref{sec:numerical_tests}.
	The initial guess \eqref{eq:initial} also significantly enhances the robustness of the NR method, making it PCP, i.e., ensures adherence to physical constraints \eqref{eq:Qphysical} throughout the NR iterations \eqref{eq:NR}. Our extensive numerical experiments demonstrate 
	 the PCP NR method's rapid convergence to the target accuracy 
	 $\epsilon_{\tt target} = 10^{-14}$, typically within five iterations on average.  
	 This highlights its exceptional efficiency.
	Moreover, 
	our results in Section \ref{sec:numerical_tests} show that in about 81\% of  random test cases, the condition ${\mathcal F}(\xi_d) \le 0$ in \eqref{eq:initial} is met. In such instances, the complex calculation of $\xi_c$ is avoid, 
	 further enhancing its overall efficiency.
\end{remark}

\begin{remark}
	Since the RMHD equations degenerate into the relativistic hydrodynamic (RHD) system when  $\mathbf B=\mathbf 0$, the PCP NR method is also applicable to RHD without magnetic field. In this case, 
	${\mathcal F}(\xi) =\xi-{\mathcal P}\left(\frac{D}{\mathcal W},\frac{\xi}{D\mathcal W}\right)-E$ with 
	with $\mathcal W(\xi)= \xi/ \sqrt{ (\xi +m) (\xi -m) } $, and the initial guess for the NR method \eqref{eq:NR} can be simply taken as $\xi_0=\xi_c=E$, which ensures the convergence and PCP property.
\end{remark}

\section{Theoretical Analysis of Convergence and Stability}\label{sec:primitive variables recovering algorithms}

This section is dedicated to a rigorous analysis of the convergence and the PCP property of the above NR method. Due to the strong nonlinearity and intricacy of the iterative function ${\mathcal F}(\xi)$, this analysis is challenging and technical. 


Recall the definition \eqref{eq:DefWxi} for the function $\mathcal W(\xi)$, which is involved in ${\mathcal F}(\xi)$. 
To make $\mathcal W(\xi)$ and ${\mathcal F}(\xi)$ well-defined, we require 
$\xi\in\Omega_1:=\mathbb{R}^+\cap \left\{\xi|f_a(\xi)>0\right\},$ 
where $f_{a}(\xi)$ is defined in \eqref{eq:DefWxi}. 
Define 
$$
f_b(\xi) :=f_a(\xi)-D^2(\xi+B^2)^2. 
$$
We recall and summarize the following results proven in \cite{wu2017admissible,wu2018physical}.

\begin{theorem}[\cite{wu2017admissible,wu2018physical}]\label{thm:m3As}
Consider a general EOS satisfying \eqref{eq:EOScond}.	If $\mathbf U \in \mathcal{G}$, then
\begin{itemize}[leftmargin=*]
		\item The quartic polynomial 
		 $f_a(\xi)$ has at least one non-negative root. Let $\xi_a$ denote the largest non-negative root of $f_a(\xi)$, then $\Omega_1=(\xi_a,+\infty)$.
		 
		 \item The quartic polynomial 
		 $f_b(\xi)$ has a unique root $\xi_b$ in $\Omega_1$. Furthermore,  $f_b(\xi)>0$ on $(\xi_b,+\infty)$, $f_b(\xi)<0$ on $(\xi_a,\xi_b)$.
		 
		 \item The function ${\mathcal F}(\xi)$ is strictly increasing on $\Omega_1$, namely, 
		 ${\mathcal F}'(\xi)>0$ for all $\xi \in \Omega_1$. Moreover, 
		 ${\mathcal F}(\xi_b)<0$ and $\lim\limits_{\xi\to +\infty}{\mathcal F}(\xi)=+\infty$. 
		 
		 \item ${\mathcal F}(\xi)$ has a unique root on $\Omega_2:=(\xi_b,+\infty)\subseteq \Omega_1$, denoted by $\xi_*$, which equals $\rho h W^2$. 
		 
\end{itemize}
\end{theorem}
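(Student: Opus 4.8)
The plan is to establish the four items in the order stated, since each rests on the preceding ones. For the first item, note that $f_a(0)=-B^2\tau^2\le0$, $f_a(\xi)\to+\infty$, $f_a'(0)=-2\tau^2\le0$, and $f_a'''(\xi)=24\xi+12B^2>0$ on $\mathbb{R}^+$; the last fact makes $f_a'$ strictly convex on $[0,\infty)$, so $\{\xi\ge0:\,f_a'(\xi)\le0\}$ is a bounded interval $[0,\xi_0]$, and hence $f_a$ is strictly decreasing on $[0,\xi_0]$ and strictly increasing on $[\xi_0,\infty)$. Since $f_a(0)\le0$, it follows that $f_a<0$ on $(0,\xi_a)$ and $f_a>0$ on $(\xi_a,+\infty)$ for a unique largest non-negative root $\xi_a$, so $\Omega_1=(\xi_a,+\infty)$. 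For the second item, I would factor $f_b(\xi)=(\xi+B^2)^2\big(G(\xi)-D^2\big)$ with $G(\xi):=f_a(\xi)/(\xi+B^2)^2$, and compute $G'(\xi)=2\xi\big[(\xi+B^2)^3-(B^2m^2-\tau^2)\big]/(\xi+B^2)^3$. On $\Omega_1$, $f_a(\xi)>0$ gives $\xi^2(\xi+B^2)^2>\xi^2m^2$, hence $\xi+B^2>m$; together with $\xi+B^2>B^2$ this yields $(\xi+B^2)^3>\max\{m^3,B^6\}\ge B^2m^2\ge B^2m^2-\tau^2$, so $G'>0$ on $\Omega_1$. Thus $G$ is a continuous strictly increasing bijection of $\Omega_1$ onto $(0,+\infty)$ (with $G(\xi_a^+)=0$), $G=D^2$ has a unique solution $\xi_b\in\Omega_1$, and since $(\xi+B^2)^2>0$ on $\Omega_1$ the sign of $f_b$ equals that of $G-D^2$, giving $f_b<0$ on $(\xi_a,\xi_b)$ and $f_b>0$ on $(\xi_b,+\infty)$.

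The monotonicity of $\mathcal{F}$ is the technical heart of the third item. Using $B^2\beta_1=\tau^2$, $\beta_2B^2=B^2m^2-\tau^2=:\kappa\ge0$, and $\mathcal{W}^{-2}=1-\beta_2/\eta^2-\beta_1/\xi^2$ (whence $\mathcal{W}'=\mathcal{W}^3\varphi_a$), I would first reduce the derivative to the compact form $\mathcal{F}'(\xi)=1-p'(\xi)-\kappa/\eta^3$, where $p(\xi):=\mathcal{P}(D/\mathcal{W},\,\xi/(D\mathcal{W}))$. With $\rho(\xi)=D/\mathcal{W}$ and $h(\xi)=\xi/(D\mathcal{W})$ one gets $\rho'=-\mathcal{W}^2\varphi_a\,\rho\ge0$ and $h'=(1/\xi-\mathcal{W}^2\varphi_a)\,h>0$, so that $p'(\xi)=-\mathcal{W}^2\varphi_a\,(\rho\mathcal{P}_\rho+h\mathcal{P}_h)+h\mathcal{P}_h/\xi$. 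On $\Omega_2$ (where $p,\rho>0$) the hypotheses \eqref{eq:EOScond} translate, via $\mathcal{P}_h=1/\partial_p\mathcal{H}$ and $\mathcal{P}_\rho=-\partial_\rho\mathcal{H}/\partial_p\mathcal{H}$, into $\mathcal{P}_h\in(0,\rho)$, $\mathcal{P}_\rho>0$, and the causality bound $\rho\mathcal{P}_\rho+h\mathcal{P}_h<\rho h=\xi/\mathcal{W}^2$. Since $-\mathcal{W}^2\varphi_a\ge0$, these give $p'(\xi)<-\xi\varphi_a+1/\mathcal{W}^2$; substituting this together with $1-1/\mathcal{W}^2=\beta_2/\eta^2+\beta_1/\xi^2$, $\xi\varphi_a=-\beta_1/\xi^2-\xi\beta_2/\eta^3$, and $\eta-\xi=B^2$ makes every leftover term cancel, leaving $\mathcal{F}'(\xi)>(\beta_2B^2-\kappa)/\eta^3=0$. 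For the $\gamma$-law EOS \eqref{eq:gamma-EOS}, where $\mathcal{P}$ is globally defined, the same computation with the elementary bounds $\mathcal{P}_hh'\le\tfrac{1}{2}(1/\mathcal{W}^2-\xi\varphi_a)$ and $\mathcal{P}_\rho\rho'\le0$ for $h<1$, combined with $\eta^3>\kappa$ (just proved), extends $\mathcal{F}'>0$ to all of $\Omega_1$.

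The remaining parts of the third item, and the fourth item, are then soft. As $\xi\to+\infty$, $\mathcal{W}\to1$, and large $\xi$ lie in $\Omega_2$, so the second inequality in \eqref{eq:EOScond} gives $h>2p/\rho$, i.e. $p<\rho h/2=\xi/(2\mathcal{W}^2)$; hence $\mathcal{F}(\xi)>\xi\big(1-\tfrac{1}{2\mathcal{W}^2}\big)-\tfrac{B^2}{2\mathcal{W}^2}-\tfrac{\tau^2}{2\xi^2}+B^2-E\ge\tfrac{\xi}{2}+\tfrac{B^2}{2}-\tfrac{\tau^2}{2\xi^2}-E\to+\infty$. At $\xi=\xi_b$ one has $f_a(\xi_b)=D^2(\xi_b+B^2)^2$, so $\mathcal{W}(\xi_b)=\xi_b/D$, $h(\xi_b)=1$, hence $p(\xi_b)=0$ and $\xi_b^2\mathcal{F}(\xi_b)=\xi_b^3+(B^2-E)\xi_b^2-\tfrac{1}{2}(B^2D^2+\tau^2)=f_c(\xi_b)$; then $\mathcal{F}(\xi_b)<0$ follows from Theorem \ref{Thm:WuTang}, since $\mathbf{U}\in\mathcal{G}$ supplies a physical $\mathbf{Q}$ whose $\xi=\rho hW^2$ is a root of $\mathcal{F}$ in $\Omega_2=(\xi_b,+\infty)$, and a continuous strictly increasing $\mathcal{F}$ can have such a root only if its boundary value $\mathcal{F}(\xi_b)$ is negative. (Equivalently, $\mathcal{F}(\xi_b)<0$ can be checked directly, which is exactly how the explicit criterion $\Psi(\mathbf{U})>0$ was obtained in \cite{wu2017admissible,wu2018physical}.) Finally, $\mathcal{F}$ being continuous, strictly increasing on $\Omega_2$, with $\mathcal{F}(\xi_b)<0<\lim_{\xi\to+\infty}\mathcal{F}(\xi)$, possesses a unique root $\xi_*\in\Omega_2$; by the construction of $\mathcal{F}$ this $\xi_*$ equals $\rho hW^2$ of the recovered state, which is physical because $\xi_*\in\Omega_2$ forces $p>0$, while $\rho=D/\mathcal{W}(\xi_*)>0$ and $|\mathbf{v}|^2=1-\mathcal{W}(\xi_*)^{-2}<1$.

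The main obstacle is the monotonicity argument of the second paragraph: $\mathcal{F}'$ is a sum of terms of mixed sign, and proving it positive hinges on (i) extracting from the abstract EOS assumptions \eqref{eq:EOScond} the usable inequalities $\mathcal{P}_h\in(0,\rho)$, $\mathcal{P}_\rho>0$, and the causality bound $\rho\mathcal{P}_\rho+h\mathcal{P}_h<\rho h$, and (ii) a delicate algebraic cancellation in which, once this bound is applied, all leftover terms collapse precisely against $\kappa=\beta_2B^2$. A secondary subtlety is that the EOS inequalities are guaranteed only where $p,\rho>0$, i.e. on $\Omega_2$, so upgrading strict monotonicity to all of $\Omega_1$ must be done EOS-by-EOS (it is elementary for the $\gamma$-law). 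The first two items are comparatively routine convexity/factorization arguments, and the fourth item is a formal consequence of the previous parts together with Theorem \ref{Thm:WuTang}.
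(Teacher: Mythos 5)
The paper does not prove Theorem \ref{thm:m3As}: it is quoted verbatim from \cite{wu2017admissible,wu2018physical} (``We recall and summarize the following results proven in\dots''), so there is no in-paper argument to compare against. Judged on its own, your reconstruction is essentially sound and in places quite elegant. The convexity argument for $f_a$, the factorization $f_b=(\xi+B^2)^2(G-D^2)$ with $G'=2\xi[(\xi+B^2)^3-(B^2m^2-\tau^2)]/(\xi+B^2)^3>0$ on $\Omega_1$, and above all the reduction $\mathcal{F}'(\xi)=1-p'(\xi)-\kappa/\eta^3$ with $\kappa=B^2m^2-\tau^2$, followed by the exact cancellation $1+\xi\varphi_a-\mathcal{W}^{-2}=\kappa/\eta^3$ against the causality bound $\rho\mathcal{P}_\rho+h\mathcal{P}_h<\rho h$ (which you correctly extract from the third condition in \eqref{eq:EOScond}), all check out. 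The limit at infinity and the identity $\xi_b^2\mathcal{F}(\xi_b)=f_c(\xi_b)$ are also correct.

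Two caveats. First, as you acknowledge, your monotonicity argument for a \emph{general} EOS establishes $\mathcal{F}'>0$ only on $\Omega_2=(\xi_b,+\infty)$, whereas the statement asserts it on all of $\Omega_1=(\xi_a,+\infty)$; on $(\xi_a,\xi_b]$ one has $h\le1$, the hypotheses \eqref{eq:EOScond} say nothing there, and your patch (via $\mathcal{P}_\rho\rho'\le0$ and $\mathcal{P}_hh'\le\frac12(\mathcal{W}^{-2}-\xi\varphi_a)$ together with $\eta^3>\kappa$) is carried out only for the $\gamma$-law EOS. Since everything downstream in the paper (Theorems \ref{thm:PCPset}--\ref{thm:wholeNR}) only uses monotonicity on $\Omega_2$, this is a gap in faithfulness to the quoted statement rather than in the paper's logic, but it should be stated as such. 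Second, your derivation of $\mathcal{F}(\xi_b)<0$ leans on Theorem \ref{Thm:WuTang} to produce a root of $\mathcal{F}$ in $\Omega_2$ and then argues backwards by monotonicity. Within this paper that is legitimate (Theorem \ref{Thm:WuTang} is taken as given), but in the source references the implication runs the other way: the condition $\Psi(\mathbf{U})>0$ defining $\mathcal{G}$ is precisely the algebraic encoding of $f_c(\xi_b)<0$, so a fully independent proof would verify $\xi_b<\xi_c$ directly from $\Psi(\mathbf{U})>0$ rather than invoking the existence of the physical state. Neither point invalidates the argument; both are worth flagging.
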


For any $\xi \in\Omega_1$, define the following functions
\begin{equation}\label{eq:xicalpri}
	\mathbf v(\xi) :=\frac{\mathbf m+\xi^{-1}\tau \mathbf B}{\xi+B^2},\quad 
	\rho(\xi) :=\frac{D}{\mathcal W(\xi)},\quad  h(\xi):= \frac{\xi}{D\mathcal W(\xi)}, \quad 
	p(\xi) :=  {\mathcal P}\left( \rho(\xi), h(\xi) \right).
\end{equation}
Given $\mathbf U\in\mathcal{G}$, we employ the NR method \eqref{eq:NR} to approximate the root   
$\xi_*$. Subsequently, the corresponding primitive variables are obtained by inserting this approximate value into \eqref{eq:xicalpri}. 
 Ensuring both the convergence of the NR method \eqref{eq:NR} and adherence to the physical constraints  \eqref{eq:Qphysical} is vital.

\begin{Def} 
	For a given $\mathbf U \in \mathcal{G}$, the NR method \eqref{eq:NR} 
	is termed convergent if the iterative sequence $\{\xi_n\}_{n\geq0}$ converges to the physical root $\xi_*$ of ${\mathcal F}(\xi)$.
\end{Def}

\begin{Def}
	For a given $\mathbf U \in \mathcal{G}$, the NR method \eqref{eq:NR}, utilized to solve ${\mathcal F}(\xi) = 0$, is termed physical-constraint-preserving (PCP) if the iterative sequence $\{\xi_n\}_{n\geq0}$ consistently satisfies
	\begin{equation}\label{eq:PCP}
		\xi_n \in \Omega_1, \quad \rho(\xi_n) > 0, \quad p(\xi_n) > 0, \quad |\mathbf v(\xi_n)| < 1, \quad \forall n \ge 0.
	\end{equation}
\end{Def}

Building upon Theorem \ref{thm:m3As}, we arrive at the following theorem.

\begin{theorem}\label{thm:PCPset}
	For a general EOS satisfying \eqref{eq:EOScond}, 
	the NR method \eqref{eq:NR} is PCP, if and only if  $\xi_n \in\Omega_2$ for all $n \ge 0$.
\end{theorem}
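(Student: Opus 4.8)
The plan is to reduce the four-part PCP requirement \eqref{eq:PCP} to a single pointwise membership condition by showing that, on the admissible domain $\Omega_1$, the positivity of $\rho$ and the subluminal bound on $\mathbf v$ are \emph{automatic}, so that the only genuine constraint is $p(\xi)>0$, which I will then identify with $\xi\in\Omega_2$. Since $\Omega_2\subseteq\Omega_1$ by Theorem \ref{thm:m3As}, once I prove the pointwise equivalence ``for $\xi\in\Omega_1$, all constraints in \eqref{eq:PCP} hold $\iff \xi\in\Omega_2$'', both directions of the asserted iff follow immediately: the PCP property holds iff $\xi_n\in\Omega_1$ and satisfies the constraints for every $n$, i.e.\ iff $\xi_n\in\Omega_1\cap\Omega_2=\Omega_2$ for all $n$.

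The first ingredient I would establish is the algebraic identity $f_a(\xi)=\xi^2(\xi+B^2)^2\bigl(1-|\mathbf v(\xi)|^2\bigr)$, obtained by a direct computation: using $\mathbf m\cdot\mathbf B=\tau$ one finds $\xi^2(\xi+B^2)^2|\mathbf v(\xi)|^2=\xi^2 m^2+(2\xi+B^2)\tau^2$, which is exactly the subtracted term in the definition \eqref{eq:DefWxi} of $f_a$. Comparing with \eqref{eq:DefWxi} this gives $\mathcal W(\xi)^2=\bigl(1-|\mathbf v(\xi)|^2\bigr)^{-1}$. For any $\xi\in\Omega_1$ we have $\xi>0$ and $f_a(\xi)>0$, hence $\mathcal W(\xi)>0$ and, because $D>0$ for $\mathbf U\in\mathcal G$, the relation $\rho(\xi)=D/\mathcal W(\xi)>0$ holds for free; likewise $1-|\mathbf v(\xi)|^2=f_a(\xi)/\bigl(\xi^2(\xi+B^2)^2\bigr)>0$ yields $|\mathbf v(\xi)|<1$ automatically. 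Thus on $\Omega_1$ the first, second, and fourth conditions in \eqref{eq:PCP} are always satisfied.

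It remains to treat the pressure. I would first show $p(\xi)>0\iff h(\xi)>1$ from the EOS assumptions \eqref{eq:EOScond}: the third condition together with $\mathcal H>0$ forces $1/\rho-\mathcal H_p<0$, i.e.\ $\mathcal H_p>1/\rho>0$, so $\mathcal H(\rho,\cdot)$ is strictly increasing, and combined with $\lim_{p\to0^+}\mathcal H(\rho,p)=1$ this gives $\mathcal H(\rho,p)>1\iff p>0$. Inverting through $p=\mathcal P(\rho,h)$ then yields $p(\xi)>0\iff h(\xi)>1$. The second ingredient is the algebraic reduction $h(\xi)>1\iff \xi>D\mathcal W(\xi)\iff \xi^2 f_a(\xi)>D^2\xi^2(\xi+B^2)^2\iff f_a(\xi)-D^2(\xi+B^2)^2>0$, where the last expression is precisely $f_b(\xi)$. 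By Theorem \ref{thm:m3As}, $f_b(\xi)>0$ on $\Omega_1$ holds exactly when $\xi>\xi_b$, that is $\xi\in\Omega_2$. Chaining these equivalences gives $p(\xi)>0\iff\xi\in\Omega_2$ for $\xi\in\Omega_1$, completing the pointwise characterization and hence the theorem.

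The main obstacle I anticipate is the step $p(\xi)>0\iff h(\xi)>1$, since it must be extracted purely from the abstract, inequality-form EOS hypotheses \eqref{eq:EOScond} rather than from an explicit formula; the monotonicity of $\mathcal H$ in $p$ and the boundary limit must be combined carefully, and one must argue that $\mathcal P$ correctly inverts $\mathcal H$ so that $h(\xi)>1$ translates back to positive pressure. By contrast, the identity $f_a=\xi^2(\xi+B^2)^2(1-|\mathbf v|^2)$ and the manipulation linking $h(\xi)>1$ to $f_b(\xi)>0$ are routine once set up, and the final appeal to the sign structure of $f_b$ is supplied directly by Theorem \ref{thm:m3As}.
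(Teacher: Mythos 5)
Your proposal is correct and follows essentially the same route as the paper's proof: the same identity $f_a(\xi)=\xi^2(\xi+B^2)^2\bigl(1-|\mathbf v(\xi)|^2\bigr)$ handling the density and velocity constraints, the same use of the EOS hypotheses (monotonicity of $\mathcal H$ in $p$ plus the limit $\lim_{p\to 0^+}\mathcal H=1$, equivalently $\mathcal P_h>0$ and $\lim_{h\to 1^+}\mathcal P=0$) to get $p(\xi)>0\iff h(\xi)>1$, the same reduction $h(\xi)>1\iff f_b(\xi)>0\iff\xi\in\Omega_2$, and the same final appeal to $\Omega_2\subseteq\Omega_1$. The only difference is presentational—you phrase $\rho>0$ and $|\mathbf v|<1$ as automatic on $\Omega_1$ while the paper states each constraint as an equivalence—which does not change the substance of the argument.
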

\begin{proof}
	Given $\mathbf U\in\mathcal{G}$, we examine each of the constraints in \eqref{eq:PCP} individually. 
	\begin{itemize}[leftmargin=*]
		\item The constraint $\rho(\xi_n)=\frac{D}{\mathcal W(\xi_n)}>0$ is equivalent to $\mathcal W(\xi_n)>0$. This means $f_a(\xi_n)>0$ and $\xi_n>0$, namely, $\xi_n\in\Omega_1$, according to Theorem \ref{thm:m3As}. 
		\item The second condition in \eqref{eq:EOScond} implies 
		$$p >0 \implies h={\mathcal H}(\rho,p) > 1, \quad \mbox{if } \rho >0.$$ 
		On the other hand, 
		as proven in \cite[Section 2.1]{wu2018physical}, 
		the third and fourth conditions in \eqref{eq:EOScond} imply 
		 ${\mathcal P}_h(\rho,h) > 0$ and $\lim\limits_{h\to 1^+} {\mathcal P}(\rho,h)  = 0$.  
		This means  $ h > 1 \implies p(\rho,h)> 0$. 
		Therefore, 
		the constraint $p(\xi_n)>0$ is equivalent to $h(\xi_n) > 1 $, namely, $\xi_n > D \mathcal W(\xi_n)$, or $f_b(\xi_n)>0$. Thanks to Theorem \ref{thm:m3As}, $p(\xi_n)>0$ is equivalent to $\xi_n \in\Omega_2$. 
		\item Note that 
				\begin{equation*}
				|\mathbf v(\xi_n)|^2-1 
				=\frac{\xi_n^2m^2+\tau^2B^2+2\tau^2\xi_n}{(\xi_n+B^2)^2\xi_n^2}-1
				=-\frac{f_a(\xi_n)}{(\xi_n+B^2)^2\xi_n^2}.
		\end{equation*}
		Thus, $|\mathbf v(\xi_n)|<1$ is equivalent to $f_a(\xi_n)>0$ and $(\xi_n+B^2)\xi_n \neq 0$.
	\end{itemize}
Recall that Theorem \ref{thm:m3As} has established $\Omega_2 \subseteq \Omega_1$.
In conclusion, the constraints \eqref{eq:PCP} are equivalent to $\xi_n \in \Omega_2$ for all $n \ge 0$. This completes the proof.
\end{proof}

\subsection{Main theorems on PCP property and convergence}

As well-known, the stability and convergence of the NR method heavily depend on the initial guess $\xi_0$. 
As noted in \cite{siegel2018recovery}, all existing NR solvers with improper initial guess for RMHD did not guarantee convergence and often failed in scenarios involving highly magnetized fluids or large Lorentz factors. 
For large-scale problems, such issues or failures can arise even when the initial guesses are close to the exact root. Thus, identifying an initial guess that ensures both the stability and convergence of the NR method \eqref{eq:NR} is a highly challenging task. 
Furthermore, even when the NR iterations \eqref{eq:NR} eventually converge to the exact root, the intermediate approximate values $\xi_n$ generated may not satisfy the physical constraints \eqref{eq:PCP}. In such cases, the NR iterations might become unstable or yield nonphysical primitive variables if the iterations terminate prematurely. Therefore, finding an initial guess that consistently secures the PCP property throughout the iterations is both critical and very nontrivial. 




We summarize our key findings in the following theorems.

\begin{theorem}\label{thm:initialset}
	Assume that $\mathbf U \in \mathcal{G}$. 
	For the $\gamma$-law EOS \eqref{eq:gamma-EOS}, 
	if the initial guess $\xi_0\in\Omega_3:=(\xi_b,\xi_*]$, then the NR method \eqref{eq:NR}
	is always PCP and convergent. 
\end{theorem}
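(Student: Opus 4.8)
The plan is to prove that, under the hypothesis $\xi_0\in\Omega_3=(\xi_b,\xi_*]$, the Newton--Raphson sequence $\{\xi_n\}_{n\ge0}$ produced by \eqref{eq:NR} never leaves $\Omega_2=(\xi_b,+\infty)$ and converges to $\xi_*$. Once this is shown, the PCP property \eqref{eq:PCP} is immediate from Theorem~\ref{thm:PCPset} (which characterizes PCP as $\xi_n\in\Omega_2$ for all $n$), and convergence to $\xi_*$ is by definition convergence to the physical root. The whole argument rests on two structural properties of the iterative function $\mathcal F$: (i) $\mathcal F$ is strictly increasing on $\Omega_1$, which is already furnished by Theorem~\ref{thm:m3As}; and (ii) for the $\gamma$-law EOS, $\mathcal F''$ changes sign \emph{at most once} on $\Omega_1$, from negative to positive, so that $\mathcal F$ is ``concave-then-convex'' with a single inflection abscissa $\xi^\sharp$ (allowing $\xi^\sharp=+\infty$, i.e.\ globally concave). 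Property (ii) is precisely the pivotal inequality, and I expect it to absorb essentially all of the technical difficulty.

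Granting (i)--(ii), the convergence proof runs as follows. Since $\mathcal F$ is increasing with $\mathcal F(\xi_*)=0$, any iterate lying in $(\xi_b,\xi_*]$ has $\mathcal F(\xi_n)\le0$, and $\mathcal F'(\xi_n)>0$ forces the Newton step to satisfy $\xi_{n+1}\ge\xi_n>\xi_b$. I then split according to where $\xi^\sharp$ sits. If all iterates stay in the concave part $(\xi_b,\min\{\xi^\sharp,\xi_*\})$, then on each segment $[\xi_n,\xi_*]$ the graph of $\mathcal F$ lies below its tangent at $\xi_n$; as that tangent is affine and increasing and vanishes at $\xi_{n+1}$ while $\mathcal F(\xi_*)=0$, we get $\xi_{n+1}\le\xi_*$, so $\{\xi_n\}$ is monotone nondecreasing and bounded above by $\xi_*$, hence convergent. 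If instead some iterate overshoots into the convex part (this can happen only when $\xi^\sharp<\xi_*$, and the overshooting step necessarily lands at a point $\ge\xi_*>\xi^\sharp$), then there $\mathcal F>0$, $\mathcal F'>0$, and the ``tangent below graph'' property gives $\xi_*\le\xi_{n+1}\le\xi_n$ from that step on, so the tail is monotone nonincreasing and bounded below by $\xi_*$, hence convergent. In both regimes every $\xi_n$ lies in $\Omega_2$, and the limit $\xi_\infty$ satisfies $\xi_\infty\ge\min\{\xi_0,\xi_*\}>\xi_b$, so $\xi_\infty\in\Omega_2\subseteq\Omega_1$. Passing to the limit in \eqref{eq:NR}, using continuity of $\mathcal F,\mathcal F'$ and $\mathcal F'(\xi_\infty)>0$, gives $\mathcal F(\xi_\infty)=0$, and uniqueness of the root on $\Omega_2$ (Theorem~\ref{thm:m3As}) yields $\xi_\infty=\xi_*$. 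This is exactly where the three generic auxiliary lemmas on Newton iteration for increasing concave-then-convex functions would be invoked, so as to package both regimes into a single clean statement. Having established $\xi_n\in\Omega_2$ for every $n$ and $\xi_n\to\xi_*$, Theorem~\ref{thm:PCPset} delivers \eqref{eq:PCP}, completing the proof.

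The genuine obstacle is property (ii) for the $\gamma$-law EOS. After substituting the $\gamma$-law into \eqref{equ:fu general0}, $\mathcal F$ becomes a rational function of $\xi$ plus a constant multiple of $\sqrt{f_a(\xi)}\big/\big(\xi(\xi+B^2)\big)$, so $\mathcal F''$ is an unwieldy mixture of $f_a^{-1/2}$, $f_a^{-3/2}$ and rational terms in $\xi$. One must clear denominators and show that the resulting polynomial-type numerator, viewed as a function of $\xi$ on $(\xi_a,+\infty)$, has exactly one sign change, from $-$ to $+$. Here the structural facts from Theorem~\ref{thm:m3As} ($f_a>0$ on $\Omega_1$, the sign pattern of $f_b$ across $\xi_b$), the admissibility $\mathbf U\in\mathcal G$, and $\gamma\in(1,2]$ all have to be exploited to pin the sign down. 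I would isolate this as a standalone lemma---the pivotal inequality---and prove it before assembling the convergence argument above.
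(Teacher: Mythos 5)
Your convergence superstructure is essentially the paper's: given that $\mathcal F$ is strictly increasing on $\Omega_2$ with $\mathcal F(\xi_b)<0$ and that $\mathcal F''$ changes sign at most once (from $-$ to $+$), a case analysis on the concave/convex regions shows the Newton iterates stay in $\Omega_2$ and converge to $\xi_*$, after which Theorem~\ref{thm:PCPset} gives the PCP property. This matches the paper's Lemmas on generic NR convergence. Two things, however, keep the proposal from being a proof. The minor one: your assertion that an iterate leaving the concave region ``necessarily lands at a point $\ge\xi_*$'' is false --- the tangent at a point in the concave region can cross zero inside $(\xi^\sharp,\xi_*)$, so an iterate may land in the convex region but still below the root. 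This subcase is easy to repair (from $[\xi^\sharp,\xi_*)$ convexity forces the \emph{next} iterate to land at or above $\xi_*$, and the decreasing regime then takes over), and the paper's auxiliary lemma treats it explicitly, together with the possibility that the iterates approach but never reach the inflection point; your limit-passing argument covers the latter.

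The major gap is that property (ii) --- the single sign change of $\mathcal F''$ on $\Omega_2$ --- is exactly the pivotal inequality, and you have not proved it; you have only stated that it must be proved. Moreover, the route you sketch (clear denominators in $\mathcal F''$ and count sign changes of the resulting ``polynomial-type numerator'' on $(\xi_a,+\infty)$) is not how the paper proceeds and is unlikely to be tractable: $\mathcal F''$ mixes rational terms with $f_a^{-1/2}$ and $f_a^{-3/2}$, the left endpoint $\xi_b$ is itself only implicitly defined as a root of a quartic, and a direct sign-change count of the squared-out numerator would involve a polynomial of very high degree with no visible structure. The paper instead proves the differential inequality $\xi\,\mathcal F'''(\xi)+4\,\mathcal F''(\xi)>0$ on $\Omega_2$, which says $\xi^4\mathcal F''(\xi)$ is strictly increasing and hence that $\mathcal F''$ changes sign at most once --- and even this requires a chain of nontrivial reductions: exploiting $D\mathcal W(\xi)<\xi$ (i.e.\ $f_b>0$ on $\Omega_2$), substituting $P=\beta_2/\eta^2$, $Q=\beta_1/\xi^2$ with $P+Q<1$, recognizing the residual expression as a quadratic form in $(\xi,\eta)$, and verifying positive semi-definiteness via a quartic polynomial inequality on the unit square. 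Since you yourself identify this lemma as absorbing ``essentially all of the technical difficulty,'' the proposal is a correct skeleton with the load-bearing bone missing.
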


The discovery and proof of Theorem \ref{thm:initialset} are highly nontrivial and technical. For better readability, we put the proof in Section \ref{sec:cproof}, after establishing some auxiliary theories in Section \ref{sec:auxi} and a crucial inequality in Section \ref{sec:ineq}.

Due to the technical difficulties, Theorem \ref{thm:initialset} is only restricted to the $\gamma$-law EOS. 
The rigorous convergence analysis for general EOSs presents a greater challenge. 
We only have a preliminary result, as shown in Theorem \ref{thm:initialset_gEOS}. 
Some numerical evidence will be presented in Section \ref{sec:numerical_tests} to validate the efficiency and robustness of the PCP NR method for various EOSs.

\begin{theorem}\label{thm:initialset_gEOS}
	Assume that $\mathbf U \in \mathcal{G}$. 
		Consider a general EOS \eqref{express:enthalpy} satisfying \eqref{eq:EOScond} and that  
	${\mathcal F}'(\xi)$ is monotone on $\Omega_2 = (\xi_b, +\infty)$, or there is an inflection point $\xi_{in} \in \Omega_2$ such that ${\mathcal F}'(\xi)$ is monotonically decreasing on $ (\xi_b,\xi_{in}]$ and increasing on $[\xi_{in},+\infty)$. 
	If the initial guess $\xi_0\in\Omega_3:=(\xi_b,\xi_*]$, then the NR method \eqref{eq:NR}
	is PCP and convergent. 
\end{theorem}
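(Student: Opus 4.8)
The plan is to establish convergence and the PCP property for the NR iteration under the weaker structural assumption on ${\mathcal F}'$, following the same skeleton one expects to use for the $\gamma$-law case in Theorem \ref{thm:initialset}. By Theorem \ref{thm:PCPset}, it suffices to show that the iterates $\xi_n$ stay in $\Omega_2 = (\xi_b,+\infty)$ for all $n$, and that $\xi_n \to \xi_*$. The first step is to record the monotone-recovery facts from Theorem \ref{thm:m3As}: ${\mathcal F}$ is strictly increasing on $\Omega_1 \supseteq \Omega_2$, ${\mathcal F}(\xi_b) < 0$, ${\mathcal F}(\xi_*) = 0$, and ${\mathcal F}(\xi) \to +\infty$ as $\xi \to +\infty$; hence ${\mathcal F}(\xi) < 0$ on $(\xi_b,\xi_*)$ and ${\mathcal F}(\xi) > 0$ on $(\xi_*,+\infty)$. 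Since ${\mathcal F}' > 0$ on $\Omega_2$, the NR map $N(\xi) := \xi - {\mathcal F}(\xi)/{\mathcal F}'(\xi)$ is well defined there.

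The heart of the argument is a one-step invariance claim: if $\xi_n \in \Omega_3 = (\xi_b,\xi_*]$, then $\xi_{n+1} \in \Omega_3$ as well, and moreover $\xi_{n+1} \ge \xi_n$ (the sequence is monotone increasing and bounded above by $\xi_*$, hence convergent; the limit must be a root of ${\mathcal F}$ in $\bar\Omega_2$, which can only be $\xi_*$). The inequality $\xi_{n+1} \ge \xi_n$ is immediate from ${\mathcal F}(\xi_n) \le 0$ and ${\mathcal F}'(\xi_n) > 0$. The upper bound $\xi_{n+1} \le \xi_*$ is where the curvature hypothesis enters. I would split on the location of $\xi_n$ relative to the inflection point $\xi_{in}$ (if present): on the interval where ${\mathcal F}'$ is monotonically increasing, ${\mathcal F}$ is convex, and the standard fact that for a convex increasing function the Newton step from a point left of the root undershoots the root gives $\xi_{n+1} \le \xi_*$ directly; on the initial interval $(\xi_b,\xi_{in}]$ where ${\mathcal F}'$ is monotonically decreasing, ${\mathcal F}$ is concave, so one needs the complementary estimate. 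Here the natural tool is one of the three auxiliary lemmas promised in Section \ref{sec:auxi} for NR on a generic nonlinear equation: a concave increasing function, even starting below the root, has its Newton iterate bounded by the root provided the concave portion is followed by a convex portion whose tangent still lies below the graph — this is exactly the ``single inflection point, decreasing-then-increasing ${\mathcal F}'$'' configuration assumed in the statement. Concretely I would show that the tangent line to ${\mathcal F}$ at $\xi_n$ crosses zero at some $\xi_{n+1} \le \xi_*$ by comparing the tangent with the secant/graph: on $[\xi_{in},\xi_*]$ convexity gives the graph above its tangents, so the tangent at $\xi_n$ (extended past $\xi_{in}$) stays below ${\mathcal F}$, forcing its root $\xi_{n+1}$ to lie at or left of $\xi_*$. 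The lower bound $\xi_{n+1} > \xi_b$ then follows from $\xi_{n+1} \ge \xi_n > \xi_b$.

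Finally, once monotone convergence $\xi_n \uparrow \xi_*$ is in hand, the PCP property is free: every $\xi_n$ lies in $\Omega_3 \subseteq \Omega_2$, so Theorem \ref{thm:PCPset} gives $\rho(\xi_n) > 0$, $p(\xi_n) > 0$, $|\mathbf v(\xi_n)| < 1$ for all $n$. The main obstacle I anticipate is the concave case: on $(\xi_b,\xi_{in}]$ the naive convexity argument is unavailable, and one must genuinely use the \emph{global} decreasing-then-increasing structure of ${\mathcal F}'$ — together with the sign information ${\mathcal F}(\xi_b) < 0$ and ${\mathcal F}(\xi_*) = 0$ — to rule out an overshoot $\xi_{n+1} > \xi_*$. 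This is precisely the scenario the generic auxiliary lemmas of Section \ref{sec:auxi} are designed to handle, so the proof should reduce to verifying that the hypotheses of the relevant auxiliary lemma (the correct sign of ${\mathcal F}$ at the endpoints, the single-inflection shape, and strict positivity of ${\mathcal F}'$ on $\Omega_2$) are met by ${\mathcal F}$ under \eqref{eq:EOScond}, and then invoking it; the $\gamma$-law analysis of Theorem \ref{thm:initialset} is the special case where the ``crucial inequality'' of Section \ref{sec:ineq} is what certifies this shape, whereas here the shape is assumed outright.
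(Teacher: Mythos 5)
There is a genuine gap, and it is located exactly where you place the ``heart of the argument.'' Your one-step invariance claim --- that $\xi_n \in \Omega_3 = (\xi_b,\xi_*]$ implies $\xi_{n+1} \in \Omega_3$, so that the sequence increases monotonically to $\xi_*$ --- is false whenever ${\mathcal F}$ is convex near $\xi_n$. You have the standard convexity fact backwards: for a convex increasing function, the tangent line at $\xi_n$ lies \emph{below} the graph, so its zero $\xi_{n+1}$ satisfies ${\mathcal F}(\xi_{n+1}) \ge 0$, i.e.\ $\xi_{n+1} \ge \xi_*$ --- the Newton step from the left of the root \emph{overshoots}, it does not undershoot. (A one-line check: $f(x)=e^x-1$, $x_0=-1$ gives $x_1=e-2>0$.) Your own later sentence ``the tangent at $\xi_n$ stays below ${\mathcal F}$, forcing its root $\xi_{n+1}$ to lie at or left of $\xi_*$'' draws the wrong conclusion from the correct premise: if the tangent stays below ${\mathcal F}$ and ${\mathcal F}(\xi_*)=0$, the tangent is still negative at $\xi_*$, so its root lies at or to the \emph{right} of $\xi_*$. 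The undershoot estimate you want is the one valid on the \emph{concave} portion $(\xi_b,\xi_{in}]$, where the tangent lies above the graph; you have swapped the two regimes.

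The theorem is nevertheless true, and the paper's proof (via Lemma \ref{lem:NRconverges}) shows why your framework cannot be repaired as stated but a weaker invariance suffices. What is actually preserved is $\Omega_2=(\xi_b,+\infty)$, not $\Omega_3$: after an overshoot $\xi_1 \ge \xi_*$ (which genuinely occurs in the convex case and in the ``inflection point left of $\xi_*$'' case), Lemma \ref{lem:monoNRconverges2} takes over and the sequence decreases monotonically to $\xi_*$ \emph{from above}, staying in $\Omega_2$ throughout. Since Theorem \ref{thm:PCPset} characterizes the PCP property by $\xi_n\in\Omega_2$ for all $n$ (not $\xi_n\in\Omega_3$), this is all that is needed. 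Your proposal also leaves unaddressed the subcase where the iterates might remain below $\xi_{in}$ for all $n$ in the concave-then-convex configuration; the paper rules this out by a monotone-bounded-convergence argument showing the limit would be a root strictly below $\xi_*$, contradicting uniqueness. To fix your write-up, replace the $\Omega_3$-invariance claim by $\Omega_2$-invariance, split into the four concavity/convexity patterns of Lemma \ref{lem:NRconverges}, and in the convex regimes prove the overshoot $\xi_{n+1}\ge\xi_*$ followed by monotone decrease, rather than an undershoot.
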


The proof of Theorem \ref{thm:initialset_gEOS} is also given in Section \ref{sec:cproof}.

Theorem \ref{thm:initialset} provides a ``safe'' interval $\Omega_3$ for the initial guess $\xi_0$ that consistently guarantees the PCP property and convergence of the NR method \eqref{eq:NR} for the $\gamma$-law EOS. Our numerical evidence supports the assumption in Theorem \ref{thm:initialset_gEOS} (see Figure \ref{fig:Df} for the two patterns of ${\mathcal F}'(\xi)$ observed in extensive random experiments), indicating that $\Omega_3$ should also be a ``safe'' interval for other EOSs. 
However, pinpointing a computable initial value $\xi_0$ within $\Omega_3$ is a new challenge. This is because the right endpoint $\xi_*$ of $\Omega_3$ is the unknown exact root of ${\mathcal F}(\xi)$, while the left endpoint $\xi_b$ is a root of the quartic polynomial $f_b(\xi)$ which is difficult to calculate efficiently. 
Interestingly, our theory presented in Section \ref{sec:initialvalue} reveals that 
the unique positive root (denoted as $\xi_c$) of 
the cubic polynomial $f_c(\xi)$ in \eqref{eq:f3} is consistently located within the ``safe'' interval $\Omega_3$, offering a calculable and stable initial value for the NR method \eqref{eq:NR}. 
On the other hand, we find that the quantity $\xi_d$ defined in \eqref{eq:xi_d}
is easier to compute and may also provide a robust initial guess. 
These findings lead to the following critical theorem.

\begin{theorem}\label{thm:wholeNR} 
	Assume that $\mathbf U \in \mathcal{G}$. 
	Consider the $\gamma$-law EOS \eqref{eq:gamma-EOS} or a general EOS \eqref{express:enthalpy} satisfying the assumptions in Theorem \ref{thm:initialset_gEOS}. 
	If the initial guess $\xi_0$ is chosen as \eqref{eq:initial} or set as $\xi_c$, 
	then the NR method \eqref{eq:NR}
	is PCP and convergent, exhibiting a quadratic convergence rate.
\end{theorem}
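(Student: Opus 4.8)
The plan is to reduce Theorem~\ref{thm:wholeNR} to Theorems~\ref{thm:initialset} and~\ref{thm:initialset_gEOS}, which already guarantee that the NR iteration~\eqref{eq:NR} is PCP and convergent whenever the initial guess lies in the ``safe'' interval $\Omega_3=(\xi_b,\xi_*]$. Hence it suffices to verify $\xi_0\in\Omega_3$ for \emph{both} admissible prescriptions of $\xi_0$ in the statement, and then to upgrade the convergence to quadratic order.

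I would first handle $\xi_0=\xi_c$. By Lemma~\ref{lem:f3oneroot} the cubic $f_c$ in~\eqref{eq:f3} has a unique positive root $\xi_c$, and by the analysis of Section~\ref{sec:initialvalue} this root always satisfies $\xi_b<\xi_c\le\xi_*$, i.e.\ $\xi_c\in\Omega_3$; so Theorem~\ref{thm:initialset} (for the $\gamma$-law EOS) or Theorem~\ref{thm:initialset_gEOS} (for a general EOS with the stated structure of ${\mathcal F}'$) applies directly. For the hybrid choice~\eqref{eq:initial}: if ${\mathcal F}(\xi_d)>0$ then $\xi_0=\xi_c$ and we are done by the previous case; if ${\mathcal F}(\xi_d)\le0$ then $\xi_0=\xi_d$, and I must show $\xi_d\in\Omega_3$. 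The lower bound $\xi_d>\xi_b$ should follow from the comparison $\xi_b<\xi_c\le\xi_d$ established in Section~\ref{sec:initialvalue} (e.g.\ via $f_c(\xi_d)\le0$), which in particular places $\xi_d$ inside $\Omega_1\supseteq\Omega_2$ so that ${\mathcal F}(\xi_d)$ is well defined and real; the upper bound $\xi_d\le\xi_*$ is then immediate since ${\mathcal F}$ is strictly increasing on $\Omega_1$ (Theorem~\ref{thm:m3As}) and ${\mathcal F}(\xi_d)\le0={\mathcal F}(\xi_*)$. Thus $\xi_d\in(\xi_b,\xi_*]=\Omega_3$, and Theorems~\ref{thm:initialset}/\ref{thm:initialset_gEOS} apply once more. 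In every case the iterates stay in $\Omega_2$, so the method is PCP (equivalently, by Theorem~\ref{thm:PCPset}) and $\xi_n\to\xi_*$.

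For the quadratic rate: ${\mathcal F}'>0$ on $\Omega_1$ by Theorem~\ref{thm:m3As}, so $\xi_*$ is a simple root and every Newton step is well defined. For the $\gamma$-law EOS, ${\mathcal F}$ is real-analytic on $\Omega_1$ (it is assembled by rational operations from $\xi$, $\xi+B^2$, and $\sqrt{f_a(\xi)}>0$, cf.~\eqref{equ:fu_exa}), hence $C^2$ near $\xi_*$; for a general EOS the same holds whenever ${\mathcal P}$ is twice differentiable. The classical local convergence theory of Newton's method then yields a constant $C$ with $|\xi_{n+1}-\xi_*|\le C\,|\xi_n-\xi_*|^2$ for $\xi_n$ sufficiently close to $\xi_*$, a regime reached after finitely many steps because $\xi_n\to\xi_*$.

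I expect the main obstacle to lie not in this assembly but in the containment $\xi_b<\xi_c\le\xi_*$ together with $\xi_c\le\xi_d$ — the business of Section~\ref{sec:initialvalue}. One must show, using only $\mathbf U\in\mathcal{G}$ and the EOS conditions~\eqref{eq:EOScond}, that $f_c(\xi_b)<0$ (equivalently $\xi_c>\xi_b$) by comparing the cubic $f_c$ with the quartic $f_b$, that ${\mathcal F}(\xi_c)\le0$ (equivalently $\xi_c\le\xi_*$) via the monotonicity of ${\mathcal F}$, and that $f_c(\xi_d)\le0$ (equivalently $\xi_c\le\xi_d$) using the admissibility inequality $E>\sqrt{D^2+m^2}$. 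These are delicate algebraic estimates among $f_a$, $f_b$, $f_c$, and the implicit function ${\mathcal F}$; everything else in the proof of Theorem~\ref{thm:wholeNR} reduces to a direct appeal to Theorems~\ref{thm:m3As}, \ref{thm:initialset}, \ref{thm:initialset_gEOS}, \ref{thm:PCPset} or to a standard Newton-method estimate.
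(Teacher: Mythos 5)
Your proposal is correct and follows essentially the same route as the paper: reduce to Theorems \ref{thm:initialset}/\ref{thm:initialset_gEOS} via the containments $\xi_c\in\Omega_3$ (Theorem \ref{thm:f3omega3}) and $\xi_b<\xi_c<\xi_d$ (Theorem \ref{lem:xid}), use the monotonicity of ${\mathcal F}$ from Theorem \ref{thm:m3As} to get $\xi_d\le\xi_*$ on the branch ${\mathcal F}(\xi_d)\le 0$, and conclude the quadratic rate from ${\mathcal F}'>0$ on $\Omega_2$ (so $\xi_*$ is a simple root). One minor slip in your closing paragraph: by Lemma \ref{lem:f3oneroot}, the inequality $\xi_c\le\xi_d$ is equivalent to $f_c(\xi_d)\ge 0$, not $f_c(\xi_d)\le 0$ — this does not affect your main argument, and the paper settles that containment by citing an inequality from \cite{wu2017admissible} rather than re-deriving it.
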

\begin{proof}
	The proof is based on Theorems \ref{thm:initialset}--\ref{thm:initialset_gEOS} and the theories on the initial guess presented in Section \ref{sec:initialvalue}. 
	Theorem \ref{thm:f3omega3} confirms that $\xi_c$ always falls within the ``safe'' interval $\Omega_3 = (\xi_b, \xi_*]$. According to Theorems \ref{thm:initialset}--\ref{thm:initialset_gEOS}, choosing $\xi_c$ as the initial guess ensures the PCP property and convergence of the NR method \eqref{eq:NR}. 
	If ${\mathcal F}(\xi_d) \le 0$, the monotonicity of ${\mathcal F}(\xi)$ shown in Theorem \ref{thm:m3As} implies $\xi_d \le \xi_*$, which along with Theorem \ref{lem:xid} yields $\xi_d \in \Omega_3=(\xi_b,\xi_*]$.  
	Consequently, if the initial guess $\xi_0$ is chosen as \eqref{eq:initial}, then the NR method \eqref{eq:NR}
	is also consistently PCP and convergent. 
	
	As stated in Theorem \ref{thm:m3As}, ${\mathcal F}'(\xi)>0$ for all $\xi \in \Omega_2$, indicating that the exact root $\xi_*$ of ${\mathcal F}(\xi)$ is not a repeated root. Hence, the PCP NR method \eqref{eq:NR} converges quadratically. 
 The proof is completed. 
\end{proof}

\begin{figure}[!t]
	\centering
	\includegraphics[width=0.635\textwidth]{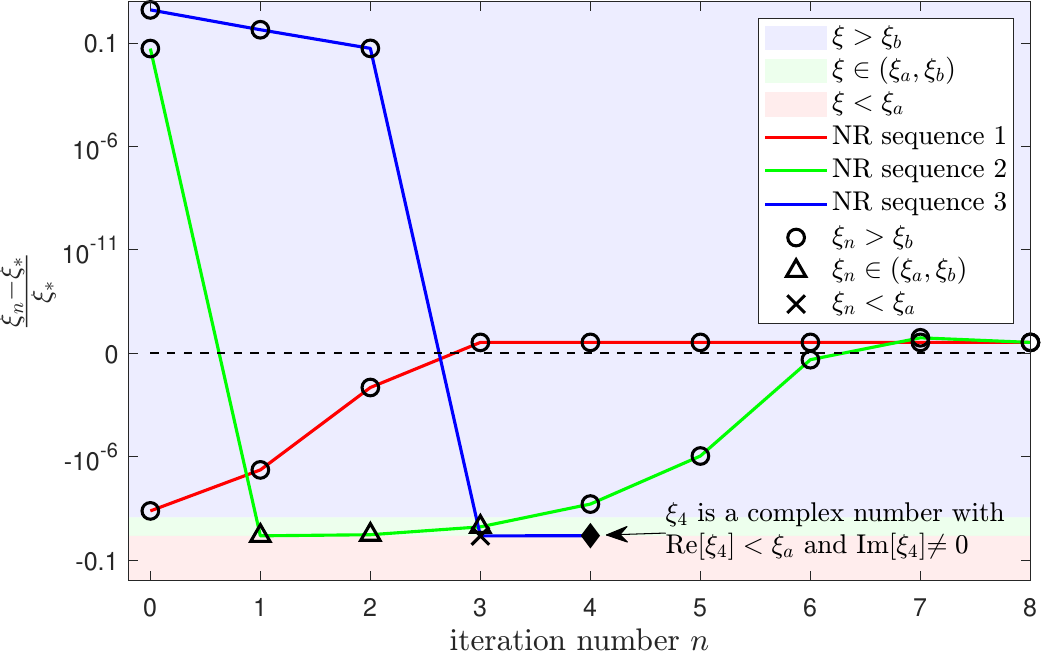} 
	\caption{Three cases of NR iterations.}\label{fig:NR}
\end{figure}

\begin{remark}
	Selecting an arbitrary initial guess within $\Omega_2 = (\xi_b, +\infty)$ may not consistently ensure the PCP property, as demonstrated by NR sequence 2 in Figure \ref{fig:NR}. Furthermore, such a selection can even cause the iterative sequence $\{\xi_n\}_{n \geq 0}$ to go out of $\Omega_1$ and become non-real, as evidenced by the NR sequence 3 in Figure \ref{fig:NR}.
In contrast, our proposed initial guess $\xi_0$, defined in \eqref{eq:initial}, consistently maintains the PCP property and guarantees convergence, as exemplified by the NR sequence 1 in Figure \ref{fig:NR}.
\end{remark}

\subsection{Auxiliary theories and proofs of Theorems \ref{thm:initialset} and \ref{thm:initialset_gEOS}}\label{sec:proof}

We establish several auxiliary results as stepping stones to prove Theorems \ref{thm:initialset} and \ref{thm:initialset_gEOS}.

\subsubsection{Auxiliary theories}\label{sec:auxi} 
We begin by presenting three fundamental lemmas that establish general theories about the convergence of the NR method when applied to a generic nonlinear equation $f(\xi) = 0$. In this context, $f(\xi)$ represents a general function, which includes but is not limited to the special function ${\mathcal F}(\xi)$ discussed in this paper. The exact root of $f(\xi)$ is also denoted by $\xi_*$.


\begin{lemma}\label{lem:monoNRconverges1}
	Let $\{\xi_n\}_{n\geq0}$ denote the iteration sequence obtained using the NR method to 
	compute the root $\xi_*$ of a general function 
	 $f(\xi)$, which is differentiable on $[\xi_0, \xi_*]$. If 
	 one of the following two conditions holds: 
	\begin{itemize}[leftmargin=*]
		\item $f'(\xi)<0$ for all $\xi\in [\xi_0,\xi_{*})$, and 
		$f'(\xi)$ is monotonically increasing on  $[\xi_0,\xi_{*})$; 
		\item $f'(\xi)>0$ for all $\xi\in [\xi_0,\xi_{*})$, and 
		$f'(\xi)$ is monotonically decreasing on  $[\xi_0,\xi_{*})$; 
	\end{itemize}
	then the NR iteration sequence $\{\xi_n\}_{n\geq0}$ is  monotonically increasing and  converges to $\xi_{*}$.
\end{lemma}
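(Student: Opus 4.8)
The plan is to prove Lemma~\ref{lem:monoNRconverges1} by establishing two monotonicity facts for the sequence $\{\xi_n\}$: that it stays to the left of (or at) $\xi_*$, and that it is nondecreasing. Both facts follow from the one-sided error formula for Newton's method together with the sign/monotonicity assumptions on $f'$. I will treat the first bullet case ($f'<0$ on $[\xi_0,\xi_*)$ with $f'$ increasing there) in detail; the second case is entirely analogous with all signs reversed, or can be reduced to the first by replacing $f$ with $-f$ (which flips the sign of $f'$ but leaves the Newton iteration and $\xi_*$ unchanged), so I would dispatch it in one sentence.

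First I would set up an induction showing $\xi_0 \le \xi_1 \le \cdots \le \xi_n \le \xi_{n+1} \le \xi_*$. For the base and inductive step, assume $\xi_n \in [\xi_0, \xi_*)$. Since $f'<0$ on $[\xi_0,\xi_*)$ and $f(\xi_*)=0$, we have $f(\xi_n) = f(\xi_n) - f(\xi_*) = -\int_{\xi_n}^{\xi_*} f'(t)\,dt > 0$; combined with $f'(\xi_n)<0$, the Newton update $\xi_{n+1} = \xi_n - f(\xi_n)/f'(\xi_n)$ satisfies $\xi_{n+1} > \xi_n$. To get $\xi_{n+1} \le \xi_*$, I would use the integral remainder: $0 = f(\xi_*) = f(\xi_n) + f'(\xi_n)(\xi_* - \xi_n) + \int_{\xi_n}^{\xi_*} (\xi_* - t) f''\,(t)\,dt$ is the route if $f\in C^2$, but since the lemma only assumes $f'$ differentiable-or-monotone, I would instead argue directly from monotonicity of $f'$: write
\begin{equation*}
	\xi_* - \xi_{n+1} = \xi_* - \xi_n + \frac{f(\xi_n)}{f'(\xi_n)} = \frac{1}{f'(\xi_n)}\Big( f(\xi_n) + f'(\xi_n)(\xi_* - \xi_n) \Big),
\end{equation*}
and observe $f(\xi_n) + f'(\xi_n)(\xi_*-\xi_n) = -\int_{\xi_n}^{\xi_*}\big(f'(t) - f'(\xi_n)\big)\,dt \le 0$ because $f'$ is increasing on $[\xi_0,\xi_*)$ so the integrand is nonnegative. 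Dividing by $f'(\xi_n)<0$ flips the sign, giving $\xi_* - \xi_{n+1} \ge 0$. This closes the induction.

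Having shown $\{\xi_n\}$ is monotonically increasing and bounded above by $\xi_*$, I would conclude it converges to some limit $\ell \le \xi_*$. Then I would pass to the limit in the Newton recursion: by continuity of $f$ and $f'$ on $[\xi_0,\xi_*]$ (differentiability gives continuity of $f$; for $f'$ I use the hypothesis that it is monotone, hence has one-sided limits, and in fact the recursion $\ell = \ell - f(\ell)/f'(\ell^-)$ forces $f(\ell)=0$ provided $f'(\ell^-)\ne 0$, which holds since $f'<0$ on the relevant interval). Since $f$ is strictly monotone on $[\xi_0,\xi_*]$, it has exactly one zero there, so $\ell = \xi_*$.

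The main obstacle I anticipate is the low regularity: the lemma only posits that $f'$ exists and is monotone on $[\xi_0,\xi_*)$, not that $f$ is $C^2$, so I cannot invoke the textbook Taylor-with-remainder convexity argument verbatim. The fix, as sketched above, is to replace $f''$-based estimates with the identity $f(b) - f(a) - f'(a)(b-a) = \int_a^b (f'(t) - f'(a))\,dt$, valid whenever $f'$ is, say, Riemann integrable (monotone functions are), and then read off the sign from monotonicity of $f'$ alone. A secondary minor point is justifying the limit passage in $f'$ at the endpoint; here monotonicity again saves us, since a monotone function on a bounded interval has a finite one-sided limit, and the strict sign $f'<0$ rules out division by zero in the limit.
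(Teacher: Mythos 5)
Your argument is correct. Note that the paper itself does not prove this lemma; it simply writes ``The proof follows that of [Lemma 2.4, cai2023provably] and is thus omitted,'' so there is no in-paper proof to compare against. What you supply is the standard monotone-Newton (Fourier-type) argument, and it is sound: the induction $\xi_n\le\xi_{n+1}\le\xi_*$ via the sign of $f(\xi_n)/f'(\xi_n)$ and the identity $f(\xi_n)+f'(\xi_n)(\xi_*-\xi_n)=-\int_{\xi_n}^{\xi_*}\bigl(f'(t)-f'(\xi_n)\bigr)\,dt\le 0$, followed by monotone convergence and identification of the limit. Two small polishing remarks. First, you can sidestep the integrability discussion entirely by using the mean value theorem instead of the integral remainder: $0=f(\xi_*)=f(\xi_n)+f'(c)(\xi_*-\xi_n)$ for some $c\in(\xi_n,\xi_*)$ gives $f(\xi_n)+f'(\xi_n)(\xi_*-\xi_n)=(f'(\xi_n)-f'(c))(\xi_*-\xi_n)\le 0$ directly from monotonicity of $f'$; this requires only differentiability, not Riemann integrability of $f'$ (which in your case does hold, since $f'$ is monotone and, being sandwiched between $f'(\xi_0)$ and $0$, bounded). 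Second, the limit passage is cleaner if phrased as: if $\ell<\xi_*$ then $|f'(\xi_n)|\le|f'(\xi_0)|$, so $|f(\xi_n)|=|f'(\xi_n)|\,|\xi_{n+1}-\xi_n|\to 0$, whence $f(\ell)=0$ by continuity of $f$, contradicting strict monotonicity of $f$ on $[\xi_0,\xi_*]$ with $f(\xi_*)=0$; this avoids any appeal to one-sided limits of $f'$. The reduction of the second bullet to the first via $f\mapsto -f$ is exactly right, since the Newton map is unchanged.
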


\begin{proof}
	The proof follows that of \cite[Lemma 2.4]{cai2023provably} and is thus omitted. 
\end{proof}

\begin{lemma}\label{lem:monoNRconverges2}
	Let $\{\xi_n\}_{n\geq0}$ denote the iteration sequence obtained using the NR method to 
	compute the root $\xi_*$ of a general function 
	$f(\xi)$, which is differentiable on $[\xi_*, \xi_0]$.  If 
	 one of the following two conditions holds:
	\begin{itemize}[leftmargin=*]
		\item $f'(\xi)<0$ for all $\xi\in (\xi_{*},\xi_0]$, and 
		$f'(\xi)$ is monotonically decreasing on  $\xi\in (\xi_{*},\xi_0]$; 
		\item $f'(\xi)>0$ for all $\xi\in (\xi_{*},\xi_0]$, and 
		$f'(\xi)$ is monotonically increasing on  $\xi\in (\xi_{*},\xi_0]$; 
	\end{itemize}
	then the NR iteration sequence $\{\xi_n\}_{n\geq0}$ monotonically decreases and  converges to $\xi_{*}$.
\end{lemma}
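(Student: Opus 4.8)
\textbf{Proof proposal for Lemma~\ref{lem:monoNRconverges2}.}
The plan is to mirror the argument of Lemma~\ref{lem:monoNRconverges1}, but with the roles of ``left'' and ``right'' exchanged, so that the NR sequence approaches $\xi_*$ from above. By symmetry it suffices to treat the second case ($f'>0$ on $(\xi_*,\xi_0]$ with $f'$ monotonically increasing there); the first case follows by replacing $f$ with $-f$, which flips the sign of $f'$ but leaves the NR iteration \eqref{eq:NR} unchanged. Under these hypotheses $f$ is strictly increasing on $(\xi_*,\xi_0]$, so $f(\xi)>f(\xi_*)=0$ for every $\xi\in(\xi_*,\xi_0]$; in particular $f(\xi_0)>0$ and $f'(\xi_0)>0$, so the first NR step is well defined and $\xi_1=\xi_0-f(\xi_0)/f'(\xi_0)<\xi_0$.

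The core of the proof is the inductive claim that if $\xi_n\in(\xi_*,\xi_0]$ then $\xi_{n+1}\in[\xi_*,\xi_n)$. The strict inequality $\xi_{n+1}<\xi_n$ is immediate from $f(\xi_n)>0$, $f'(\xi_n)>0$. For the lower bound $\xi_{n+1}\ge\xi_*$ I would use the monotonicity of $f'$: since $f'$ is increasing on $(\xi_*,\xi_0]$, for any $t\in(\xi_*,\xi_n)$ we have $f'(t)\le f'(\xi_n)$, hence
\begin{equation*}
	f(\xi_n) = f(\xi_n) - f(\xi_*) = \int_{\xi_*}^{\xi_n} f'(t)\,dt \le f'(\xi_n)\,(\xi_n-\xi_*),
\end{equation*}
which rearranges to $\xi_{n+1}=\xi_n - f(\xi_n)/f'(\xi_n) \ge \xi_*$. (If $\xi_{n+1}=\xi_*$ we are done; otherwise $\xi_{n+1}\in(\xi_*,\xi_n)$ and the induction continues.) Thus $\{\xi_n\}_{n\ge0}$ is monotonically decreasing and bounded below by $\xi_*$, so it converges to some limit $\ell\in[\xi_*,\xi_0]$. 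Passing to the limit in $\xi_{n+1}=\xi_n - f(\xi_n)/f'(\xi_n)$ and using continuity of $f$ and $f'$ together with $f'(\ell)>0$ (valid since $f'$ is increasing and positive, hence bounded below by $f'(\xi_1)>0$ on the relevant range, or $\ell=\xi_*$ directly) forces $f(\ell)=0$; as $f$ is strictly increasing on $[\xi_*,\xi_0]$ its only root there is $\xi_*$, so $\ell=\xi_*$.

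The only delicate point is the edge case where the limit is approached with $f'(\ell)$ potentially degenerating; but here $f'$ is increasing on $(\xi_*,\xi_0]$, so once $\xi_1<\xi_0$ all subsequent iterates lie in $[\xi_*,\xi_1]$ where $f'$ is bounded below by a value that is either $f'(\xi_*)$ (if defined) or handled by noting $\ell=\xi_*$ is the only fixed point of the iteration map on that interval. Hence this step is routine rather than a genuine obstacle, and since the statement is an exact mirror image of Lemma~\ref{lem:monoNRconverges1}, I expect the proof to be short — indeed it should simply be cited as following the proof of \cite[Lemma 2.4]{cai2023provably} \emph{mutatis mutandis}, exactly as Lemma~\ref{lem:monoNRconverges1} is.
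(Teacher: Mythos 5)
Your proof is correct. The paper itself does not spell out an argument here — it simply cites \cite[Lemma 2.5]{cai2023provably} and omits the proof — and your argument is exactly the standard Newton–Fourier monotone-convergence reasoning one would expect behind that citation: the tangent-line/convexity estimate $f(\xi_n)\le f'(\xi_n)(\xi_n-\xi_*)$ keeps the iterates above $\xi_*$, monotone boundedness gives a limit, and the limit must be the root. (Two cosmetic points: the integral step is safest phrased via the mean value theorem, $f(\xi_n)=f'(c)(\xi_n-\xi_*)$ with $f'(c)\le f'(\xi_n)$, which sidesteps any integrability fuss, though monotone derivatives are Riemann integrable anyway; and in the degenerate case $\xi_{n+1}=\xi_*$ one should note the hypotheses only guarantee $f'>0$ on the half-open interval $(\xi_*,\xi_0]$, so one either assumes the iteration halts at a root or that $f'(\xi_*)\neq 0$ — an edge case the cited source glosses over as well.)
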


\begin{proof}
	The proof follows that of \cite[Lemma 2.5]{cai2023provably} and is thus omitted. 
\end{proof}

%

\begin{figure}[!t]
	\centering
	\subfloat[]{
		\includegraphics[width=0.21\textwidth]{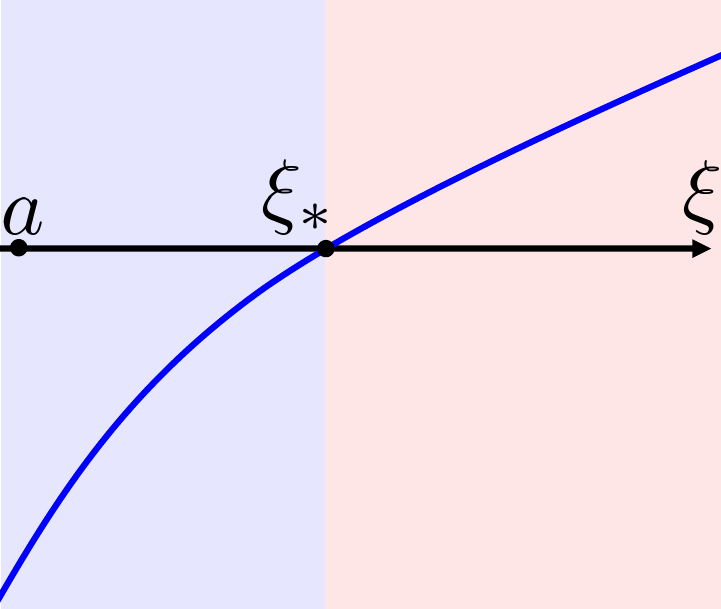}\label{fig:convexity1}
	} ~~~
	\subfloat[]{
		\includegraphics[width=0.21\textwidth]{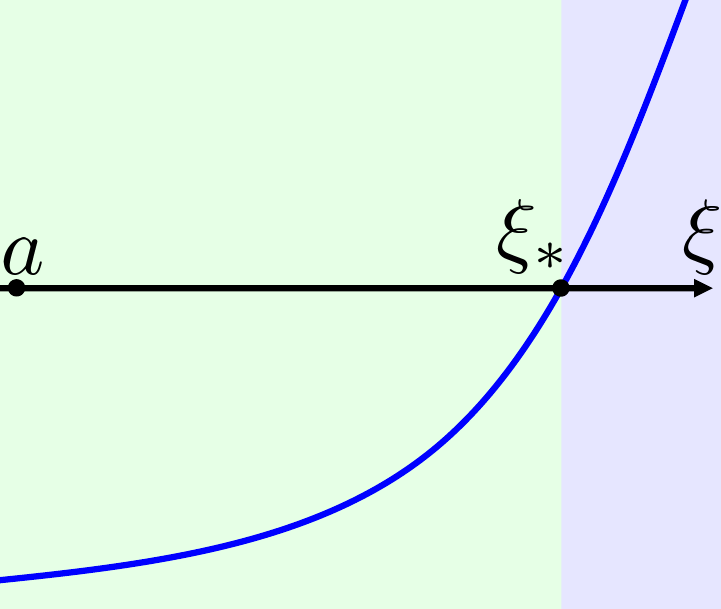}\label{fig:convexity2}
	} ~~~
	\subfloat[]{
		\includegraphics[width=0.21\textwidth]{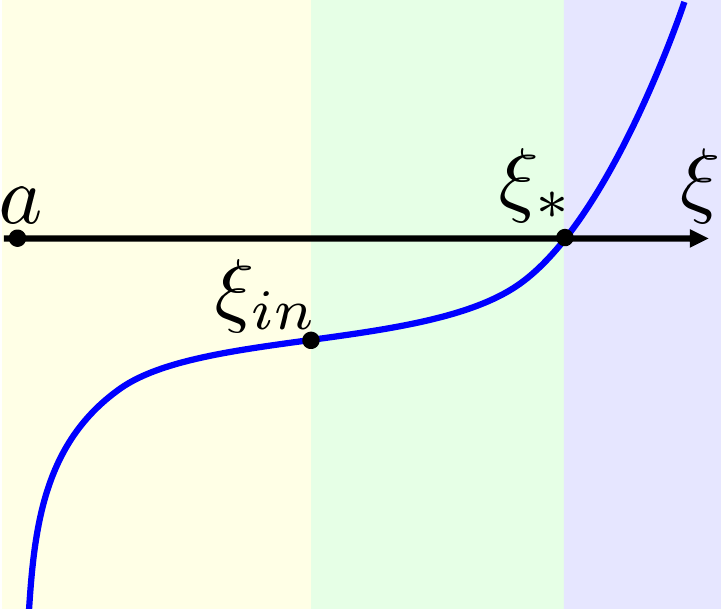}\label{fig:convexity3}
	}~~~
	\subfloat[]{
		\includegraphics[width=0.21\textwidth]{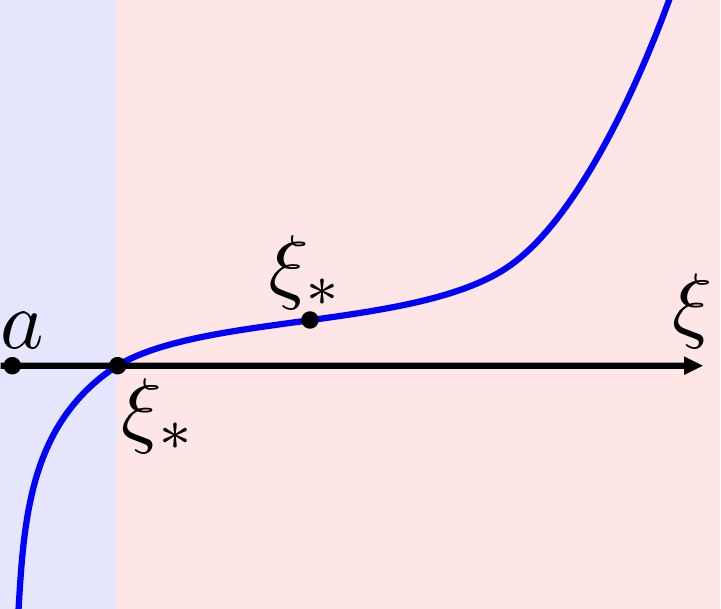}\label{fig:convexity4}
	}
	\caption{Four possible patterns of $f(\xi)$ described in Lemma \ref{lem:NRconverges}.}\label{fig:convexity}
\end{figure}

Now we are ready to prove the following general result. 

\begin{lemma}\label{lem:NRconverges}
	Assume that $f(\xi)$ is a differentiable function on $(a,+\infty)$, $f(a)<0$, $\lim\limits_{\xi\to +\infty}f(\xi)>0$, and $f'(\xi)>0$ for all $\xi \in (a,+\infty)$. Suppose that one of the following two  conditions is satisfied:
	\begin{itemize}[leftmargin=*]
		\item $f'(\xi)$ is monotone on $(a,+\infty)$;
		\item There exists $\xi_{in}\in(a,+\infty)$, such that $f'(\xi)$ is monotonically decreasing on $(a,\xi_{in}]$ and increasing on $[\xi_{in},+\infty)$.
	\end{itemize}
	Then we have: 
	\begin{itemize}[leftmargin=*]
		\item $f(\xi)$ has a unique root $\xi_{*}\in(a,+\infty)$;
		\item For the NR method solving the equation $f(\xi)=0$, if the initial value $\xi_0\in(a,\xi_{*}]$, then the iteration sequence $\left\{\xi_n\right\}_{n\geq0}\subseteq (a,+\infty)$ and $\lim\limits_{n\to +\infty}\xi_n=\xi_{*}$.
	\end{itemize}
\end{lemma}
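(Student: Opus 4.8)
The plan is to reduce the statement to the two one-sided convergence lemmas (Lemma \ref{lem:monoNRconverges1} and Lemma \ref{lem:monoNRconverges2}) already established, after first disposing of the existence/uniqueness claim. The existence of a root is immediate from the intermediate value theorem applied to the continuous function $f$ on $(a,+\infty)$: since $f(a)<0$ (interpreted as a limit from the right, $\lim_{\xi\to a^+}f(\xi)<0$) and $\lim_{\xi\to+\infty}f(\xi)>0$, there is some $\xi_*\in(a,+\infty)$ with $f(\xi_*)=0$; uniqueness follows from $f'(\xi)>0$ on $(a,+\infty)$, which makes $f$ strictly increasing. This part is routine.

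For the convergence claim, I would split into two cases according to where the initial guess $\xi_0\in(a,\xi_*]$ sits relative to the possible inflection point $\xi_{in}$ (in the first bullet of the hypothesis, $f'$ is monotone, so one may take $\xi_{in}=a$ or $\xi_{in}=+\infty$ and only one case arises). \emph{Case 1: $f'$ is monotonically decreasing on $(a,\xi_{in}]$.} If $\xi_0\in(a,\min\{\xi_{in},\xi_*\}]$, then on $[\xi_0,\xi_*)$ we have $f'>0$ and $f'$ decreasing (it is decreasing on $(a,\xi_{in}]$ and, if $\xi_*>\xi_{in}$, one must check it stays decreasing up to $\xi_*$ — but in Case 1 we assumed $\xi_0\le\xi_{in}$, so if $\xi_*>\xi_{in}$ the interval $[\xi_0,\xi_*)$ straddles $\xi_{in}$; I need $f'$ decreasing on all of $[\xi_0,\xi_*)$, which holds only if $\xi_*\le\xi_{in}$). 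This indicates the clean split is instead by comparing $\xi_*$ with $\xi_{in}$. \emph{Sub-case $\xi_*\le\xi_{in}$:} on $[\xi_0,\xi_*)\subseteq(a,\xi_{in}]$, $f'>0$ and $f'$ decreasing, so Lemma \ref{lem:monoNRconverges1} (second bullet) applies and $\{\xi_n\}$ increases monotonically to $\xi_*$, staying in $(a,\xi_*]\subseteq(a,+\infty)$. \emph{Sub-case $\xi_*>\xi_{in}$:} here I would argue that the NR iteration from any $\xi_0\in(a,\xi_*]$ still converges, by first showing the iterates cannot leave $(a,+\infty)$ — the standard geometric picture is that for a strictly increasing $f$ with a root, the NR step from a point left of $\xi_*$ lands somewhere, and one controls it using the local convexity structure; concretely, if $\xi_0\in(\xi_{in},\xi_*]$ then on $[\xi_0,\xi_*)$ $f'>0$ and $f'$ is increasing, so Lemma \ref{lem:monoNRconverges1} (interchanging the monotonicity direction — actually this is the increasing-$f'$ scenario) does not directly apply; instead the iterates from a point in the convex region $[\xi_{in},\xi_*]$ decrease toward $\xi_*$ if $\xi_0>\xi_*$, but since $\xi_0\le\xi_*$ we need the increasing-$f'$, $f'>0$ version which gives monotone increase to $\xi_*$ via Lemma \ref{lem:monoNRconverges1}'s \emph{second} bullet requires $f'$ decreasing — so in fact for $\xi_0\in[\xi_{in},\xi_*]$ with $f'$ increasing I should instead verify directly that $\xi_1=\xi_0-f(\xi_0)/f'(\xi_0)\ge\xi_0$ (since $f(\xi_0)\le 0$, $f'(\xi_0)>0$) and $\xi_1\le\xi_*$ (by convexity of $f$ on $[\xi_{in},+\infty)$, the tangent at $\xi_0$ lies below $f$, so its root $\xi_1$ satisfies $\xi_1\le\xi_*$); this shows $\{\xi_n\}$ is monotone increasing and bounded above by $\xi_*$, hence convergent, and the limit must be $\xi_*$ by continuity. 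For $\xi_0\in(a,\xi_{in}]$ with $\xi_*>\xi_{in}$, I would first show the iterates increase while they stay in $(a,\xi_{in}]$ (as in the decreasing-$f'$ case via Lemma \ref{lem:monoNRconverges1}), and once an iterate enters $[\xi_{in},\xi_*]$ the previous argument takes over; a careful check is needed that no iterate overshoots past $\xi_*$ when crossing $\xi_{in}$ — this follows because on $(a,\xi_{in}]$ $f$ is concave, so the tangent line lies above $f$ only on one side, and one must verify the Newton step cannot jump beyond $\xi_*$. The case where $f'$ is monotone on all of $(a,+\infty)$ is subsumed: either $f'$ increasing everywhere (use the convexity/tangent-below-$f$ argument globally) or $f'$ decreasing everywhere (use Lemma \ref{lem:monoNRconverges1} directly).

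The main obstacle I anticipate is the bookkeeping in the sub-case $\xi_0\in(a,\xi_{in}]$ with $\xi_*>\xi_{in}$: one must track the Newton iterates as they pass through the inflection point $\xi_{in}$, proving both that they remain in $(a,+\infty)$ (never producing a complex or out-of-domain value — the analogue of escaping $\Omega_1$ in the application) and that they do not overshoot $\xi_*$. The key estimates will be: (i) $f(\xi_0)\le 0$ and $f'(\xi_0)>0$ give $\xi_1\ge\xi_0>a$; (ii) on the concave portion $(a,\xi_{in}]$, the tangent at $\xi_0$ lies above the graph of $f$, so since $f(\xi_*)=0$ the tangent is positive at $\xi_*$, forcing its root $\xi_1$ to satisfy $\xi_1\le\xi_*$; (iii) once in $[\xi_{in},\xi_*]$, convexity gives the tangent below the graph, again yielding $\xi_{n+1}\le\xi_*$, while $f(\xi_n)\le 0$ gives $\xi_{n+1}\ge\xi_n$. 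Combining (i)–(iii), $\{\xi_n\}$ is monotone nondecreasing and bounded above by $\xi_*$, hence converges, and passing to the limit in the NR recursion together with $f'>0$ forces the limit to be $\xi_*$. I would organize the write-up so that the monotone/bounded argument is stated once and the four geometric patterns in Figure \ref{fig:convexity} are handled by pointing to whichever of (ii) or (iii) applies on each subinterval.
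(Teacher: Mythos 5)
Your overall architecture (existence/uniqueness via strict monotonicity, then a case analysis on the concavity/convexity pattern, reduction to Lemmas \ref{lem:monoNRconverges1} and \ref{lem:monoNRconverges2}, and the limit argument to rule out stalling below $\xi_{in}$) matches the paper's proof. However, your two key tangent-line estimates (ii) and (iii) are wrong, and they are exactly the steps that carry the load. In (iii) you claim that on the convex portion $[\xi_{in},+\infty)$ the tangent lying \emph{below} the graph forces $\xi_{n+1}\le\xi_*$; the inequality goes the other way. If $T(\xi)=f(\xi_n)+f'(\xi_n)(\xi-\xi_n)$ lies below $f$, then $T(\xi_*)\le f(\xi_*)=0=T(\xi_{n+1})$, and since $T$ is increasing this gives $\xi_{n+1}\ge\xi_*$: Newton applied to a convex increasing function from the left of the root \emph{overshoots} to the right of the root in one step. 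In (ii) you claim that concavity of $f$ on $(a,\xi_{in}]$ forces $\xi_1\le\xi_*$ even when $\xi_*>\xi_{in}$; but the tangent-above-graph property holds only on the concave region, and $\xi_*$ lies in the convex region, where $f$ can curve back above the tangent. Quantitatively, $\xi_1-\xi_0=\frac{1}{f'(\xi_0)}\int_{\xi_0}^{\xi_*}f'(t)\,dt$, which exceeds $\xi_*-\xi_0$ whenever $f'$ grows substantially on $(\xi_{in},\xi_*)$. So the invariant ``$\{\xi_n\}$ is monotone nondecreasing and bounded above by $\xi_*$'' on which your final monotone-convergence argument rests is simply false in general.

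The lemma is still true, and the paper's proof shows the correct repair: one must \emph{permit} overshoot rather than exclude it. Whenever an iterate lands in $[\xi_*,+\infty)$ (which happens after one step from any point of the convex region $[\xi_{in},\xi_*]$, and may happen from the concave region as well), the hypotheses of Lemma \ref{lem:monoNRconverges2} are met there ($f'>0$ and $f'$ increasing on $(\xi_*,\xi_1]$ since $\xi_{in}<\xi_*$ in that case), so the subsequent iterates decrease monotonically back to $\xi_*$. The case analysis for $\xi_0$ in the concave region with $\xi_*>\xi_{in}$ then splits into $\xi_1>\xi_*$ (handled by Lemma \ref{lem:monoNRconverges2}), $\xi_1\in[\xi_{in},\xi_*]$ (reduces to the convex case), and $\xi_1<\xi_{in}$ (iterate and invoke your limit argument to show some iterate must eventually reach $[\xi_{in},+\infty)$). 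You would need to restructure your write-up along these lines for the proof to go through.
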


\begin{proof}
	Since $f'(\xi)>0$ for all $\xi \in (a,+\infty)$, the function $f(\xi)$ is strictly increasing on $(a,+\infty)$. 
	Because $f(a)<0$ and $\lim\limits_{\xi\to +\infty}f(\xi)>0$, we know that $f(\xi)$ has a unique root within $(a,+\infty)$, denoted by $\xi_{*}$. 
	Next, we study the behavior of the NR iteration sequence for solving the equation $f(\xi)=0$ with the initial guess $\xi_0\in(a,\xi_{*}]$. 
		Based on the monotonicity conditions of $f'(\xi)$, we can determine that the concavity/convexity structure of $f(\xi)$ can only fall into one of the four cases depicted in Figure \ref{fig:convexity}. Let us consider each case separately. 
	\begin{enumerate}[label = {(\alph*)},leftmargin=*]
		\item   
		As illustrated in Figure \ref{fig:convexity1}, in this case, $f'(\xi)$ is monotonically decreasing on $(a,+\infty)$. 
		When the initial value $\xi_0\in(a,\xi_{*})$, the NR iteration sequence $\{\xi_n\}_{n\geq0}$ exhibits a monotonic increase and converges to $\xi_{*}$, according to Lemma \ref{lem:monoNRconverges1}. Consequently, $\left\{\xi_n\right\}_{n\geq0}\subseteq (a,+\infty)$ and $\lim\limits_{n\to +\infty}\xi_n=\xi_{*}$.
		
		\item As illustrated in Figure \ref{fig:convexity2}, in this case, $f'(\xi)$ is monotonically increasing on $(a,+\infty)$. If $\xi_0\in(a,\xi_{*}]$, then $\xi_1=\xi_0-\frac{f(\xi_0)}{f'(\xi_0)}$, and $f(\xi_1)\geq f(\xi_0)+(\xi_1-\xi_0)f'(\xi_0)=0$. Thus, $\xi_1\in[\xi_{*},+\infty)$. According to  Lemma \ref{lem:monoNRconverges2}, 
		the NR iteration sequence $\left\{\xi_n\right\}_{n\geq1}$ is monotonically decreasing and converges to $\xi_{*}$. Therefore,  $\left\{\xi_n\right\}_{n\geq0}={\xi_0}\cup\left\{\xi_n\right\}_{n\geq1}\subseteq (a,+\infty)$ and $\lim\limits_{n\to +\infty}\xi_n=\xi_{*}$. 
		
		\item As illustrated in Figure \ref{fig:convexity3}, in this case, there exists an inflection point $\xi_{in}\in(a,\xi_{*})$ such that $f'(\xi)$ is monotonically decreasing on $(a,\xi_{in}]$ and increasing on $[\xi_{in},+\infty)$.

		\begin{enumerate}[label = (\Roman*)]
			\item 		If $\xi_0\in[\xi_{in},\xi_{*}]$, then similar to the above case (b), we have  $\xi_1\in[\xi_{*},+\infty)$. Consequently, the NR iteration sequence $\left\{\xi_n\right\}_{n\geq1}$ is monotonically decreasing and converges to $\xi_{*}$, due to Lemma \ref{lem:monoNRconverges2}. 
			Thus we have $\left\{\xi_n\right\}_{n\geq0}={\xi_0}\cup\left\{\xi_n\right\}_{n\geq1}\subseteq (a,+\infty)$ and $\lim\limits_{n\to +\infty}\xi_n=\xi_{*}$.
			\item  If $\xi_0\in(a,\xi_{in})$, then $\xi_1=\xi_0-\frac{f(\xi_0)}{f'(\xi_0)}>\xi_0$ and we need to investigate the following three situations: 
			
			\begin{itemize}[leftmargin=*]
				\item If $\xi_1>\xi_{*}$, then the NR iteration sequence $\left\{\xi_n\right\}_{n\geq1}$ is  monotonically decreasing and converges to $\xi_{*}$, according to Lemma \ref{lem:monoNRconverges2}, and thus  $\left\{\xi_n\right\}_{n\geq0}={\xi_0}\cup\left\{\xi_n\right\}_{n\geq1}\subseteq (a,+\infty)$. 
				\item If $\xi_{*}\geq\xi_1\geq\xi_{in}$, then the discussion returns to the above case (I), and we have $\left\{\xi_n\right\}_{n\geq1}\subseteq (a,+\infty)$ and $\lim\limits_{n\to +\infty}\xi_n=\xi_{*}$. Consequently,  $\left\{\xi_n\right\}_{n\geq0}={\xi_0}\cup\left\{\xi_n\right\}_{n\geq1}\subseteq (a,+\infty)$. 
				\item If $\xi_1<\xi_{in}$, then $\xi_2=\xi_1-\frac{f(\xi_1)}{f'(\xi_1)}>\xi_1$. 
				By repeatedly following
				the aforementioned discussions, as long as $\xi_m\geq\xi_{in}$ appears in the iteration, we have either $\xi_m>\xi_{*}$ or $\xi_{*}\geq\xi_m\geq\xi_{in}$, for both cases we have  $\left\{\xi_n\right\}_{n\geq0}=\left\{\xi_n\right\}_{m>n\geq0}\cup\left\{\xi_n\right\}_{n\geq m}\subseteq (a,+\infty)$ and $\lim\limits_{n\to +\infty}\xi_n=\xi_{*}$.		
				 			It remains
				 to discuss whether it is possible that $\xi_n < \xi_{in}$ for all $n\geq1$. Assume that such a situation
				 occurs, then by the monotone bounded convergence theorem, the sequence $\left\{\xi_n\right\}_{n\geq0}$ must have a limit $\xi_{**}$, which satisfies $\xi_{**}\leq\xi_{in}<\xi_{*}$. It follows that 
				 $$0=\displaystyle\lim_{n\to +\infty}(\xi_{n+1}-\xi_n)=-\displaystyle\lim_{n\to +\infty}\frac{f(\xi_n)}{f'(\xi_n)}=-\frac{f(\xi_{**})}{f'(\xi_{**})},$$ 
				 which yields $f(\xi_{**})=0$. This is contradictory to $\xi_{**}<\xi_{*}$, since $\xi_{*}$ is the unique root of $f(\xi)$ on $(a,+\infty)$. 
				 Hence, the assumption is incorrect, and there always
				 exists a $m\geq1$ such that $\xi_m\geq\xi_{in}$. 
			\end{itemize}
		\end{enumerate} 
		In short, if $\xi_0\in(a,\xi_*]$, we always have $\left\{\xi_n\right\}_{n\geq0}\subseteq (a,+\infty)$ and $\lim\limits_{n\to +\infty}\xi_n=\xi_{*}$.
		\item 
		As illustrated in Figure \ref{fig:convexity4}, in this case, there exists an inflection point $\xi_{in}\in[\xi_{*},+\infty)$ such that $f'(\xi)$ is monotonically decreasing on $(a,\xi_{in}]$ and increasing on $[\xi_{in},+\infty)$. 
		If $\xi_0\in(a,\xi_{*}]$, then the iteration sequence $\left\{\xi_n\right\}_{n\geq0}$ 
		is monotonically increasing and 
		converges  to $\xi_{*}$, according to Lemma \ref{lem:monoNRconverges1}. Therefore, $\left\{\xi_n\right\}_{n\geq0}\subseteq (a,+\infty)$ and $\lim\limits_{n\to +\infty}\xi_n=\xi_{*}$.
	\end{enumerate}
	The proof is completed. 
\end{proof}

\subsubsection{A crucial inequality}\label{sec:ineq} 

We discover the following important inequality, which will play a key role in understanding the structure of ${\mathcal F}(\xi)$ and proving the PCP property and convergence of our NR method \eqref{eq:NR} for the  $\gamma$-law EOS. We emphasize that this is a very nontrivial discovery.  

\begin{theorem}\label{lem:inequality}
	Consider the $\gamma$-law EOS \eqref{eq:gamma-EOS}. For any $\xi \in \Omega_2$ and $\mathbf U \in \mathcal{G}$, it holds 
	\begin{equation}\label{eq:ineq}
	  \xi	{\mathcal F}'''(\xi)+4{\mathcal F}''(\xi) > 0. 
	\end{equation}
\end{theorem}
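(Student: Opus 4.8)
The plan is to reduce the inequality \eqref{eq:ineq} to a manageable algebraic statement by exploiting the specific structure of $\mathcal F(\xi)$ for the $\gamma$-law EOS, and then to verify that the resulting expression is positive on $\Omega_2$. First I would observe that, for the $\gamma$-law EOS, the inversion $p = \mathcal P(\rho,h)$ is explicit, so that the term $-\mathcal P\!\left(\tfrac{D}{\mathcal W},\tfrac{\xi}{D\mathcal W}\right)$ in \eqref{equ:fu general0} becomes an explicit rational-algebraic function of $\xi$ through $\mathcal W(\xi)$. Writing $\mathcal F(\xi) = \xi + \alpha_1 - \tfrac12\!\left(\tfrac{B^2}{\mathcal W^2} + \tfrac{\tau^2}{\xi^2}\right) - \mathcal P(\cdots)$, the operator $\xi \tfrac{d^3}{d\xi^3}(\cdot) + 4\tfrac{d^2}{d\xi^2}(\cdot)$ annihilates the affine part $\xi + \alpha_1$, so only the contributions of $\tfrac{B^2}{2\mathcal W^2}$, $\tfrac{\tau^2}{2\xi^2}$, and the pressure term survive. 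Recalling from \eqref{fu:express2} that $\mathcal W(\xi)^{-2}$ is itself a rational function of $\xi$ and $\eta = \xi + B^2$, and that $\tfrac{\xi^2}{\mathcal W^2} = f_a(\xi)/\eta^2$ with $f_a$ the quartic in \eqref{eq:DefWxi}, each surviving term becomes an explicit rational function of $\xi$ whose numerator and denominator are polynomials; I would compute $\xi\, (\cdot)''' + 4 (\cdot)''$ for each.

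The key algebraic simplification I expect to exploit is that the operator $\mathcal L[g] := \xi g''' + 4 g''$ has a convenient action on negative powers and on compositions with $\mathcal W$: for instance, $\mathcal L[\xi^{-2}]$ is a pure power of $\xi$ with a definite sign, and more importantly, a monomial-type ansatz should reveal that $\mathcal L$ acts nicely on functions of the form (polynomial)$/\xi^k$ or (polynomial)$/\eta^k$. The crucial inequality \eqref{eq:ineq} should, after clearing a manifestly positive common denominator (a power of $\xi$ times a power of $\eta$ times a power of $f_a(\xi)$, all positive on $\Omega_2 \subseteq \Omega_1$ where $f_a > 0$), reduce to the positivity of a single polynomial in $\xi$ with coefficients built from $D, B, m, \tau, E$. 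At that stage I would use the structural facts available from Theorem \ref{thm:m3As} and Theorem \ref{Thm:WuTang} — in particular that $\mathbf U \in \mathcal G$, that $f_b(\xi) > 0$ on $\Omega_2$, and the admissibility inequalities defining $\mathcal G$ — to certify that every coefficient of this polynomial (or a suitable rearrangement of it) is nonnegative, or to bound it below by a sum of squares.

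The main obstacle, and the genuinely technical heart of the argument, will be the last step: the polynomial produced after clearing denominators is high-degree in $\xi$ and its coefficients are messy polynomial expressions in $D, B, m, \tau, E$, so naive coefficient-by-coefficient positivity will likely fail and a clever regrouping is needed. I anticipate that the right move is to not expand blindly but to keep the $\mathcal W$-dependence packaged — e.g. track the quantities $\rho(\xi) = D/\mathcal W$, $h(\xi) = \xi/(D\mathcal W)$, and $|\mathbf v(\xi)|^2 = 1 - f_a(\xi)/(\xi^2\eta^2) < 1$ as nonnegative/bounded building blocks — and to rewrite $\mathcal L[\mathcal F]$ as an explicitly positive combination of these. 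Concretely, I would aim to show $\xi \mathcal F'''(\xi) + 4\mathcal F''(\xi)$ equals a sum of terms each of which is positive because $\rho, h - 1, \tfrac{B^2}{2}\big(\tfrac1{\mathcal W^2}-\tfrac1{\eta^2}\big)$ and $1 - |\mathbf v|^2$ are positive on $\Omega_2$; the adiabatic-index constraint $\gamma \in (1,2]$ will enter through the sign of the coefficients coming from $\mathcal P_h$, $\mathcal P_\rho$ and their higher derivatives, and I expect $\gamma \le 2$ to be exactly what guarantees the relevant coefficient is of the correct sign. If a fully hand-checkable sum-of-squares decomposition is out of reach, a fallback is to split $\Omega_2$ at the inflection point $\xi_{in}$ of $\mathcal F'$ and argue the two regimes separately, using monotonicity of $\mathcal F'$ on each piece.
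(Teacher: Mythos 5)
Your overall direction---reduce $\xi\mathcal F'''+4\mathcal F''$ to an explicit algebraic positivity statement, using $f_b>0$ on $\Omega_2$ (i.e.\ $D\mathcal W(\xi)<\xi$), the constraint $1-|\mathbf v|^2>0$, and $\gamma\in(1,2]$ via $\tfrac{\gamma}{\gamma-1}\ge 2$---matches the ingredients the paper actually uses. But there are concrete gaps. First, your claim that after the affine part is annihilated ``each surviving term becomes an explicit rational function of $\xi$'' is false: the pressure term contributes $\tfrac{\gamma-1}{\gamma}\tfrac{D}{\mathcal W}$, and $\mathcal W$ enters the derivatives to \emph{odd} powers (the paper's computation produces terms like $D\varphi_a^2\mathcal W^3$ and $D\varphi_b\mathcal W$), so you cannot simply clear denominators to land on a polynomial. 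The paper disposes of these algebraic (square-root) terms by a two-case argument: when the bracketed coefficient of $D\mathcal W$ is nonpositive the whole group is discarded, and when it is positive one bounds $D\mathcal W<\xi$ (from $f_b>0$) to replace $D\mathcal W$ by $\xi$ and only then obtains a genuinely rational target $\phi(\xi)\le 0$. You have the right inequality in hand but do not articulate this mechanism, and without it your ``clear a manifestly positive common denominator'' step does not go through.

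Second, the decisive structural observations are missing. The paper substitutes $P=\beta_2/\eta^2$, $Q=\beta_1/\xi^2$ (so that $P+Q=|\mathbf v|^2<1$---your instinct to use $1-|\mathbf v|^2$ as a building block is exactly this), and then notices that the resulting expression $\widehat\phi$ is a \emph{quadratic form in $(\xi,\eta)$}, so positivity reduces to positive semi-definiteness of a $2\times2$ matrix: one diagonal entry is itself a small PSD quadratic form in $(P,Q-1)$, and $\det(\mathbf A)=\varphi(P,Q)$ is a two-variable polynomial on $[0,1]^2$ whose interior stationary points lie on the boundary, where it factors into manifestly nonnegative pieces. Your proposal explicitly concedes that the final positivity certification may be ``out of reach'' by your stated methods, and indeed naive coefficient positivity or a blind SOS search on the high-degree polynomial in five parameters would be very hard; the quadratic-form reduction is the idea that makes the problem tractable. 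Finally, your fallback of splitting $\Omega_2$ at the inflection point $\xi_{in}$ of $\mathcal F'$ is circular: the existence and uniqueness of that inflection point is deduced in the paper \emph{from} inequality \eqref{eq:ineq} (via the monotonicity of $\xi^4\mathcal F''$), so it cannot be presupposed in its proof.
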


\begin{proof}
	Let $g(\xi):=-\frac{\gamma}{\gamma-1} {\mathcal F}''(\xi)$. Because $\gamma >1$,  we only need to prove 
		\begin{equation}\label{eq:ineq_g}
		\xi	g'(\xi)+4 g(\xi) < 0. 
	\end{equation}
	Define
	\begin{equation*}
		\begin{aligned}
			&\varphi_a(\xi) :=-\frac{\tau^2(3\xi^2+3\xi B^2+B^4)+m^2\xi^3}{\xi^3(\xi+B^2)^3},\\
			&\varphi_b(\xi) :=\frac{(2\xi+B^2)\tau^2(2\xi^2+2\xi B^2+B^4)+m^2\xi^4}{\xi^4(\xi+B^2)^4},\\
			&\varphi_c(\xi) :=-\frac{\tau^2(5\xi^4+10\xi^3B^2+10\xi^2B^4+5B^6\xi+B^8)+m^2\xi^5}{\xi^5(\xi+B^2)^5},
		\end{aligned}
	\end{equation*}
	which satisfy 
	\begin{equation*}
		\begin{aligned}
			&\varphi_a'(\xi)=3\varphi_b(\xi),\qquad \varphi_b'(\xi)=4\varphi_c(\xi),\\
			&\mathcal W'(\xi)=\varphi_a(\xi)\mathcal W^3(\xi),\qquad \mathcal W''(\xi)=3\mathcal W^5(\xi)\varphi_a^2(\xi)+3\mathcal W^3(\xi)\varphi_b(\xi).
		\end{aligned}
	\end{equation*}
	Then we have 
	\begin{equation*}
		\begin{aligned}
			&{\mathcal F}'(\xi)=1+B^2\varphi_a+\frac{\tau^2}{\xi^3}-\frac{\gamma-1}{\gamma}\left(\frac{1}{\mathcal W^2}-2\xi \varphi_a+D\mathcal W\varphi_a\right),\\
			&
			\resizebox{1\hsize}{!}{${\mathcal F}''(\xi)=-\left ( \mathcal W^3\frac{\gamma-1}{\gamma}D\varphi_a^2+3\mathcal W\frac{\gamma-1}{\gamma}D\varphi_b+\left(\frac{3\tau^2}{\xi^4}-3\varphi_b\Big(B^2+2\frac{\gamma-1}{\gamma}\xi\Big)-4\frac{\gamma-1}{\gamma}\varphi_a\right)\right).$}
		\end{aligned}
	\end{equation*}
	It follows that 
	\begin{equation*}
		g(\xi)=-\frac{\gamma}{\gamma-1}{\mathcal F}''(\xi)=D\varphi_a^2\mathcal W^3+3D\varphi_b\mathcal W+\frac{3\gamma}{\gamma-1}\frac{\tau^2}{\xi^4}-3\varphi_b(\frac{\gamma}{\gamma-1}B^2+2\xi)-4\varphi_a.
	\end{equation*}
	Direct calculations yield that
		\begin{align*}
			g'(\xi)\xi+4g(\xi)=&3D\xi \varphi_a^3\mathcal W^5+9D\xi \varphi_a\varphi_b\mathcal W^3+4D\varphi_a^2\mathcal W^3+12D\varphi_b\mathcal W+12D\varphi_c\mathcal W\xi\\
			&-12\left(\frac{\gamma}{\gamma-1}B^2+2\xi\right)(\varphi_c\xi+\varphi_b)-16\varphi_a-18\varphi_b\xi.
		\end{align*}
	Noting that $\frac{\gamma}{\gamma-1}\geq2$ and 
	\begin{equation}\label{equ:1}
		\varphi_b(\xi)+\varphi_c(\xi)\xi=\frac{m^2B^2-\tau^2}{(\xi+B^2)^5}>0,
	\end{equation}
	we obtain 
	\begin{equation}\label{WKL1}
		\begin{aligned}
			g'(\xi)\xi+4g(\xi)\leq&3D\xi \varphi_a^3\mathcal W^5+9D\xi \varphi_a\varphi_b\mathcal W^3+4D\varphi_a^2\mathcal W^3+12D\varphi_b\mathcal W+12D\varphi_c\mathcal W\xi\\
			& +\left(-24(B^2+\xi)(\varphi_c\xi+\varphi_b)-16\varphi_a-18\varphi_b\xi \right) \\
		   =&D\mathcal W\left( 3\xi \varphi_a^3\mathcal W^4+9\xi \varphi_a\varphi_b\mathcal W^2+4\varphi_a^2\mathcal W^2+12\varphi_b+12\varphi_c\xi \right)\\
		   &+\left(-24(B^2+\xi)(\varphi_c\xi+\varphi_b)-16\varphi_a-18\varphi_b\xi \right).
		\end{aligned}
	\end{equation}
For the last term in \eqref{WKL1}, using \eqref{equ:1} and 
		$\varphi_a(\xi)+\varphi_b(\xi)\xi=\frac{\tau^2-m^2B^2}{(\xi+B^2)^4}$ 
gives 
	\begin{equation*}
		\begin{aligned}
			&-24(B^2+\xi)(\varphi_c\xi+\varphi_b)-16\varphi_a-18\varphi_b\xi \\
			=&-6\left[4(B^2+\xi)\left(\varphi_c\xi+\varphi_b\right)+3(\varphi_a+\varphi_b\xi)\right]+2\varphi_a\\
			=&-6\frac{m^2B^2-\tau^2}{(\xi+B^2)^4}+2\varphi_a<0,
		\end{aligned}
	\end{equation*}
	where $\varphi_a(\xi)<0$ and $m^2B^2>\tau^2$ have been used.
	Next, we study the upper bound of \eqref{WKL1} in two cases: 
	\begin{itemize}[leftmargin=*]
	\item  Case 1:	$3\xi \varphi_a^3\mathcal W^4+9\xi \varphi_a\varphi_b\mathcal W^2+4\varphi_a^2\mathcal W^2+12\varphi_b+12\varphi_c\xi\leq0$. 
	
		In this case, it is evident that
		\begin{equation*}
			\begin{aligned}
				&D\mathcal W\Big( 3\xi \varphi_a^3\mathcal W^4+9\xi \varphi_a\varphi_b\mathcal W^2+4\varphi_a^2\mathcal W^2+12\varphi_b+12\varphi_c\xi \Big)\\
				&-24(B^2+\xi)(\varphi_c\xi+\varphi_b)-16\varphi_a-18\varphi_b\xi<0.
			\end{aligned}
		\end{equation*}
		
		\item Case 2: 
		$3\xi \varphi_a^3\mathcal W^4+9\xi \varphi_a\varphi_b\mathcal W^2+4\varphi_a^2\mathcal W^2+12\varphi_b+12\varphi_c\xi>0$. 
		
		In this case,  using  
		$\xi\in\Omega_2$, which implies $f_b(\xi)>0$ and $D\mathcal W(\xi)<\xi$, we obtain 
		\begin{equation*}
			\begin{aligned}
				&D\mathcal W \Big( 3\xi \varphi_a^3\mathcal W^4+9\xi \varphi_a\varphi_b\mathcal W^2+4\varphi_a^2\mathcal W^2+12\varphi_b+12\varphi_c\xi \Big)\\
				&-24(B^2+\xi)(\varphi_c\xi+\varphi_b)-16\varphi_a-18\varphi_b\xi\\
			   <&\xi \Big( 3\xi \varphi_a^3\mathcal W^4+9\xi \varphi_a\varphi_b\mathcal W^2+4\varphi_a^2\mathcal W^2+12\varphi_b+12\varphi_c\xi \Big)\\
			   &-24(B^2+\xi)(\varphi_c\xi+\varphi_b)-16\varphi_a-18\varphi_b\xi.
			\end{aligned}
		\end{equation*}
	\end{itemize}
	Combining the estimates from these two cases, we establish that to show \eqref{eq:ineq_g}, 
it is sufficient to prove 
	\begin{equation} \label{WKL2}
		\begin{aligned}
			\phi (\xi):= & \xi\Big( 3\xi \varphi_a^3\mathcal W^4+9\xi \varphi_a\varphi_b\mathcal W^2+4\varphi_a^2\mathcal W^2+12\varphi_b+12\varphi_c\xi\Big)\\
			&-24(B^2+\xi)(\varphi_c\xi+\varphi_b)-16\varphi_a-18\varphi_b\xi\leq0, \quad \forall \xi \in \Omega_2.
		\end{aligned}
	\end{equation}
	Using the notations in \eqref{key331b} and recalling that $\eta>0$ and $\beta_1,\beta_2 \ge 0$, 
we reformulate 
	\begin{equation}\label{subs:2}
		\begin{aligned}
			&\varphi_a=-\left(\frac{\beta_1}{\xi^3}+\frac{\beta_2}{\eta^3}\right), \qquad 
			\varphi_b=\frac{\beta_1}{\xi^4}+\frac{\beta_2}{\eta^4},\\
			&\varphi_c=-\left(\frac{\beta_1}{\xi^5}+\frac{\beta_2}{\eta^5}\right), \qquad \mathcal W=\left( 1-\Big(\frac{\beta_1}{\xi^2}+\frac{\beta_2}{\eta^2}\Big) \right)^{-\frac12}.
		\end{aligned}
	\end{equation}
Substituting \eqref{subs:2} into $\phi(\xi)$ in \eqref{WKL2}, we obtain 
	\begin{equation}\label{ineq:3}
		\begin{aligned}
		 \mathcal W^4(\xi) \phi(\xi) =	& -\frac{2 \beta_1}{\xi^3}-\frac{8 \beta_2}{\eta^3}-\frac{\beta_1^2}{\xi^5}+\frac{16 \beta_2^2}{\eta^5}-\frac{8 \beta_2^3}{\eta^7}+\frac{18 \xi \beta_2}{\eta^4}-\frac{32 \xi \beta_2^2}{\eta^6}+\frac{14 \xi \beta_2^3}{\eta^8}-\frac{6 \xi^2 \beta_2^3}{\eta^9} \\
			& +\frac{15 \xi^2 \beta_2^2}{\eta^7}-\frac{12 \xi^2 \beta_2}{\eta^5}+\frac{15 \beta_1 \beta_2}{\xi^2 \eta^3}-\frac{16 \beta_1^2 \beta_2}{\xi^4 \eta^3}+\frac{32 \beta_1 \beta_2^2}{\xi \eta^6}+\frac{\beta_1^2 \beta_2}{\xi^5 \eta^2}-\frac{45 \beta_1 \beta_2}{\xi \eta^4} \\
			& +\frac{27 \beta_1^2 \beta_2}{\xi^3 \eta^4}-\frac{15 \beta_2^2 \beta_1}{\eta^5 \xi^2}-\frac{15 \beta_2^2 \beta_1}{\eta^7}+\frac{24 \beta_2 \beta_1}{\eta^5}-\frac{12 \beta_2 \beta_1^2}{\eta^5 \xi^2}+\frac{4 \beta_1 \beta_2}{\xi^3 \eta^2}-\frac{2 \beta_1 \beta_2^2}{\xi^3 \eta^4}.
		\end{aligned}
	\end{equation}
	For any $\xi \in \Omega_2$, we have $0\le \frac{\beta_1}{\xi^2}+\frac{\beta_2}{\eta^2}<1$, which inspires us to introduce two auxiliary variables: 
	\begin{equation}\label{subs:3}
		\begin{aligned}
			&P:= \beta_2/\eta^2, \qquad Q:=\beta_1/\xi^2. 
		\end{aligned}
	\end{equation}
	Substituting \eqref{subs:3} into \eqref{ineq:3} gives 
	\begin{equation}\label{ineq:4}
	\begin{aligned}
	\mathcal W^4(\xi) &\phi(\xi) = 	 -\frac{1}{\xi\eta^3}\Big( \left(2QP^2-Q^2P-4QP+Q^2+2Q\right) \eta^3 
	\\
	&+\big(8 P^3 \xi+15 P^2 Q \xi+16 P Q^2 \xi-16 P^2 \xi -15 P Q \xi +8 P \xi\big) \eta^2  \\
		 & +\left(-14 P^3 \xi^2-32 P^2 Q \xi^2-27 P Q^2 \xi^2+32 P^2 \xi^2+45 P Q \xi^2-18 P \xi^2\right) \eta  \\
	 &+6 P^3 \xi^3+15 P^2 Q \xi^3+12 P Q^2 \xi^3-15 P^2 \xi^3-24 P Q \xi^3+12 P \xi^3 \Big). 
	\end{aligned}
\end{equation}
Notice that 
	\begin{equation}\label{constr:1}
		\begin{aligned}
			&P+Q<1, \quad 
			P\geq0,\quad Q\geq0,\\
			&\eta>0,\quad \xi>0, \quad 
			\eta-\xi=B^2>0. 
		\end{aligned}
	\end{equation}
This implies $2QP^2-Q^2P-4QP+Q^2+2Q=2Q(P-1)^2+Q^2(1-P)\geq0$, which along with \eqref{ineq:4} gives 
	\begin{equation}\label{ineq:5}
	\begin{aligned}
		\mathcal W^4(\xi) \phi(\xi) \leq& -\frac{1}{\xi\eta^3}\left[\left(8 P^3 \xi+15 P^2 Q \xi+16 P Q^2 \xi-16 P^2 \xi-15 P Q \xi+8 P \xi\right)\eta^2\right. \\
		& +\left(-14 P^3 \xi^2-32 P^2 Q \xi^2-27 P Q^2 \xi^2+32 P^2 \xi^2+45 P Q \xi^2-18 P \xi^2\right) \eta\\
		& \left.+6 P^3 \xi^3+15 P^2 Q \xi^3+12 P Q^2 \xi^3-15 P^2 \xi^3-24 P Q \xi^3+12 P \xi^3\right]\\
		= &-\frac{1}{\eta^3}P \widehat \phi (P,Q,\xi,\eta),
		\end{aligned}
	\end{equation}
	where
	\begin{equation*}
		\begin{aligned}
		 \widehat \phi(P,Q,\xi,\eta)  := &  8 P^2 \eta^2-14 P^2 \xi \eta+6 P^2 \xi^2+15 P Q \eta^2-32 P Q  \xi\eta+15 P Q \xi^2  \\
		& +16 Q^2 \eta^2-27 Q^2 \xi \eta
			   +12  Q^2 \xi^2-16 P \eta^2+32 P \xi \eta-15 P \xi^2
			     \\
			   &-15 Q \eta^2 +45 Q  \xi \eta-24 Q \xi^2+8 \eta^2-18 \xi \eta+12 \xi^2.
		\end{aligned}
	\end{equation*}
	Therefore, in order to prove \eqref{WKL2}, it is sufficient to prove 
\begin{equation}\label{WKL3}
		\widehat \phi(P,Q,\xi,\eta)\geq0 
\end{equation}
	for all  $P$, $Q$, $\eta$, and $\xi$ satisfying \eqref{constr:1}. 

	As a crucial observation, we notice that $\widehat \phi(P,Q,\xi,\eta)$ is a quadratic form in the variables $\xi$ and $\eta$: 
	$$\widehat \phi (P,Q,\xi,\eta)=\bf z^\top\bf A\bf z,$$
	where
	${\bf z}=(\xi,\eta)^\top$ and 
	\begin{equation*}
		{\bf A}=
		\begin{pmatrix}
			6P^2+12Q^2+15P(Q-1)-24Q+12&-7P^2-\frac{27}2 Q^2-16P(Q-1)+\frac{45}2 \\ 
			-7P^2-\frac{27}2 Q^2-16P(Q-1)+\frac{45}2 Q-9&8P^2+16Q^2+15Q(P-1)-16P+8
		\end{pmatrix}.
	\end{equation*}
	Now, the task of proving \eqref{WKL3} boils down to showing 
 that $\bf A$ is positive semi-definite. Notice that 
	$$6P^2+12Q^2+15P(Q-1)-24Q+12=
	\binom{P}{Q-1}^\top
	\begin{pmatrix}
		6&7.5 \\ 
		7.5&12
	\end{pmatrix}	
	\binom{P}{Q-1}\geq0.$$
	Hence, we only need to prove
	\begin{equation}
		\begin{aligned}
			\varphi (P,Q):=\det({\bf A})=&15 + 2P^2 - \frac{405}{4}Q^2 + 33Q - 24P - 99PQ - 14P^3Q - 28P^2Q^2   \\
			&- 12PQ^3+ 80P^2Q + 135PQ^2 - P^4 + \frac{39}{4}Q^4 + 8P^3 + \frac{87}{2}Q^3\geq0
		\end{aligned}
	\end{equation}
	for all $(P,Q)$ in the domain 
	$$ \Theta :=\left\{ (P,Q):   0\leq P\leq1,0\leq Q\leq1\right\},$$ 
	which is larger than the set formed by the constraints \eqref{constr:1}. 
	By solving the equations
	\begin{equation}
			\frac{\partial  \varphi(P,Q)}{\partial P}=0,\qquad 
			\frac{\partial  \varphi(P,Q)}{\partial Q}=0,
	\end{equation}
	we know that the function $\varphi(P,Q)$ has only two stationary points, $(0,1)$ and $(1,0)$, in the domain $ \Theta$. 
	Both these two stationary points are located on the boundary of $\Theta$, implying that the minimum of 
	$\varphi (P,Q)$ on $\Theta$ is attained on the boundary. 
   Notice that 
	\begin{align*}
		\varphi(0,Q)&=15 - \frac{405}{4}Q^2 + 33Q + \frac{39}{4}Q^4 + \frac{87}{2}Q^3=(Q-1)^2\left(\frac{39}{4}Q^2+63Q+15\right)\geq0,\\
		\varphi(1,Q)&=\frac{23}{4}Q^2 + \frac{63}{2}Q^3 + \frac{39}{4}Q^4=Q^2\left(\frac{23}{4} + \frac{63}{2}Q + \frac{39}{4}Q^2\right)\geq0,\\
		\varphi(P,0)&=-P^4 + 8P^3 + 2P^2 - 24P + 15=-(P-1)^2(P^2-6P-15)\geq0,\\
		\varphi(P,1)&=-P^4 - 6P^3 + 54P^2=-P^2(P^2+6P-54)\geq0,
	\end{align*}
which implies 
$$0 \le 
\min_{(P,Q) \in \partial \Theta} \varphi (P,Q) = \min_{(P,Q) \in \Theta}  \varphi (P,Q).$$ 
Therefore, $ \varphi (P,Q) \ge 0$ for all $(P,Q) \in \Theta$. 
Hence $\bf A$ is positive semi-definite, and inequality \eqref{WKL3} is true. 
Thanks to \eqref{ineq:5}, we have $\mathcal W^4(\xi) \phi(\xi) \le 0$, which yields \eqref{WKL2}. 
In conclusion, inequality \eqref{eq:ineq_g}, or equivalently, \eqref{eq:ineq} is true. The proof is completed. 
\end{proof}

\subsubsection{Complete proofs of Theorems \ref{thm:initialset} and \ref{thm:initialset_gEOS}}\label{sec:cproof}

\begin{proof}
	For the $\gamma$-law EOS, 
the inequality \eqref{eq:ineq} established in Theorem \ref{lem:inequality} implies 
	\begin{equation*}
		\frac{d}{d \xi}\Big(\xi^4{\mathcal F}''(\xi)\Big)=\xi^3 \Big( \xi	{\mathcal F}'''(\xi)+4{\mathcal F}''(\xi) \Big)>0, \qquad \forall \xi \in \Omega_2.
	\end{equation*}
	It means that $\xi^4{\mathcal F}''(\xi)$ is strictly increasing on the interval $\Omega_2=(\xi_b, +\infty)$. This is a critical observation. 
	Hence, it must satisfy one of the following two conditions:
	\begin{itemize}[leftmargin=*]
		\item $\xi^4{\mathcal F}''(\xi)$ does not change sign in $\Omega_2$, or equivalently, ${\mathcal F}'(\xi)$ is monotone.
		\item There exists $\xi_{in}\in\Omega_2$, such that 
		$\xi_{in}^4{\mathcal F}''(\xi_{in})=0$, $\xi^4{\mathcal F}''(\xi) < 0$ for all $\xi \in (\xi_b,\xi_{in})$, and $\xi^4{\mathcal F}''(\xi) > 0$ for all $\xi \in (\xi_{in},+\infty)$. This means
		$${\mathcal F}''(\xi_{in})=0, \quad  {\mathcal F}''(\xi) < 0 ~\mbox{for~all}~ \xi \in (\xi_b,\xi_{in}), \quad  {\mathcal F}''(\xi) > 0 ~\mbox{for~all}~ \xi \in (\xi_{in},+\infty). 
$$		
	\end{itemize}  
Therefore, for the $\gamma$-law EOS, ${\mathcal F}'(\xi)$ is monotone on $\Omega_2 = (\xi_b, +\infty)$, or there is an inflection point $\xi_{in} \in \Omega_2$ such that ${\mathcal F}'(\xi)$ is monotonically decreasing on $ (\xi_b,\xi_{in}]$ and increasing on $[\xi_{in},+\infty)$. This is also the assumption in Theorem \ref{thm:initialset_gEOS} for a general EOS. 
Recall that 
Theorem \ref{thm:m3As} indicates ${\mathcal F}(\xi_b)<0$, $\lim\limits_{\xi\to +\infty}{\mathcal F}(\xi)>0$, and ${\mathcal F}'(\xi)>0$ for all $\xi \in (\xi_b, +\infty)$. 
Hence, all the requirements in Lemma \ref{lem:NRconverges} are satisfied by ${\mathcal F}(\xi)$. 
Thanks to Lemma \ref{lem:NRconverges}, we know that 
if the initial value $\xi_0\in \Omega_3=(\xi_b,\xi_{*}]$, then the iteration sequence generated by the NR method \eqref{eq:NR} satisfies 
$$\left\{\xi_n\right\}_{n\geq0}\subseteq \Omega_2=(\xi_b, +\infty), \qquad \lim\limits_{n\to +\infty}\xi_n=\xi_{*},$$ 
which indicate the PCP property (by Theorem \ref{thm:PCPset}) and convergence. 
The proofs of Theorems \ref{thm:initialset} and \ref{thm:initialset_gEOS} are completed. %
\end{proof}

\subsection{Theories on finding an initial value $\xi_0$ in the safe interval $\Omega_3$}\label{sec:initialvalue}
Theorems \ref{thm:initialset} and \ref{thm:initialset_gEOS} tell us that if the initial guess $\xi_0\in\Omega_3=(\xi_b,\xi_*]$, then the 
NR method \eqref{eq:NR} is PCP and convergent. 
However, the right endpoint $\xi_*$ of this ``safe'' interval $\Omega_3$ is the (unknown) exact root 
 of ${\mathcal F}(\xi)$, while the left endpoint $\xi_b$ is a root of the quartic polynomial $f_b(\xi)$, which is difficult to calculate efficiently. 
 Hence, it is nontrivial to determine a computable initial value $\xi_0$ within $\Omega_3$. 

Interestingly, we discover that the cubic polynomial $f_c(\xi)$ in \eqref{eq:f3} 
has a unique positive root (denoted by $\xi_c$), which is always located within the ``safe'' interval $\Omega_3$.

\begin{lemma}[see Lemma 2.6 in \cite{wu2017admissible}]\label{lem:f3oneroot}
	Consider a general EOS \eqref{express:enthalpy} satisfying \eqref{eq:EOScond}. Assume that ${\bf U} \in {\mathcal G}$. 
	The cubic polynomial $f_c(\xi)$ in \eqref{eq:f3}  has a unique positive root $\xi_c$. Moreover, $f_c(\xi)<0$ for all $\xi \in(0,\xi_c)$, and $f_c(\xi)>0$ for all $\xi \in (\xi_c,+\infty)$.
\end{lemma}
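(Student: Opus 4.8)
The plan is to avoid working with the cubic directly and instead reduce it, on the relevant domain $(0,+\infty)$, to a strictly monotone rational function. For $\xi>0$ set $g(\xi):= f_c(\xi)/\xi^2 = \xi + \alpha_1 + a_0/\xi^2$, where $\alpha_1 = B^2-E$ and $a_0 = -\tfrac12(B^2D^2+\tau^2)\le 0$. Since $\xi^2>0$, the functions $f_c$ and $g$ have the same sign on $(0,+\infty)$ and the same positive zeros, so it suffices to prove that $g$ has exactly one zero $\xi_c\in(0,+\infty)$ with $g<0$ on $(0,\xi_c)$ and $g>0$ on $(\xi_c,+\infty)$.

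The key steps, in order, would be: (i) extract from $\mathbf{U}\in\mathcal{G}$ the positivity $E>0$ — indeed the defining inequality $E-\sqrt{D^2+|\mathbf{m}|^2}>0$ together with $D>0$ gives $E>\sqrt{D^2+m^2}>0$; (ii) compute $g'(\xi)=1-2a_0/\xi^3 = 1+(B^2D^2+\tau^2)/\xi^3>0$ for all $\xi>0$, so $g$ is strictly increasing on $(0,+\infty)$; (iii) examine the endpoints, $g(\xi)\to+\infty$ as $\xi\to+\infty$, while $g(\xi)\to-\infty$ as $\xi\to0^+$ when $a_0<0$, and $g(\xi)\to B^2-E=-E<0$ as $\xi\to0^+$ in the degenerate case $a_0=0$ (which forces $B=0$, hence $\tau=\mathbf{m}\cdot\mathbf{B}=0$); in either case $\lim_{\xi\to0^+}g(\xi)<0$; (iv) conclude that a continuous, strictly increasing function that starts below $0$ and tends to $+\infty$ has a unique zero $\xi_c\in(0,+\infty)$, negative to its left and positive to its right, and transport these sign statements back to $f_c=\xi^2 g$.

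I do not expect a serious obstacle here; the only mild care needed is the degenerate case $B=0$, where the ``$g(0^+)=-\infty$'' reasoning fails and must be replaced by the observation that $g(0^+)=-E<0$ still holds, so that the argument remains uniform in $B$. As an alternative (and a sanity check) one could argue purely from Vieta's formulas: the coefficient of $\xi$ in $f_c$ is zero, so the three roots satisfy $r_1r_2+r_1r_3+r_2r_3=0$ and $r_1r_2r_3=-a_0\ge 0$, which is incompatible with three positive roots and pins down exactly one positive real root; however, the monotone-$g$ route is preferable because it delivers the sign behavior of $f_c$ on $(0,\xi_c)$ and $(\xi_c,+\infty)$ with no additional computation, which is precisely what later results (e.g.\ the location of $\xi_c$ in the ``safe'' interval $\Omega_3$) will rely on.
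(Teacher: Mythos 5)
Your proof is correct. Note that the paper does not actually prove this lemma itself: it is imported verbatim as Lemma~2.6 of the cited reference \cite{wu2017admissible}, so there is no in-paper argument to compare against. Your reduction $g(\xi)=f_c(\xi)/\xi^2=\xi+\alpha_1+a_0/\xi^2$ is a clean, self-contained route: the derivative computation $g'(\xi)=1+(B^2D^2+\tau^2)/\xi^3>0$ is right, the endpoint limits are right, and you correctly isolate the only delicate point, namely the degenerate case $a_0=0$ (which forces $B=0$ and hence $\tau=0$, leaving $g(0^+)=-E<0$ via $E>\sqrt{D^2+m^2}>0$ from ${\bf U}\in\mathcal G$). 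The monotonicity-plus-IVT conclusion then gives existence, uniqueness, and the sign pattern of $f_c$ on $(0,\xi_c)$ and $(\xi_c,+\infty)$ in one stroke, which is exactly the form in which the lemma is used later (e.g.\ in Theorem~\ref{thm:f3omega3}, where $f_c(\xi_*)>0$ is converted into $\xi_*>\xi_c$). Your Vieta remark is indeed only a partial sanity check as stated (it excludes three positive roots but needs the extra observation $r_1r_2r_3=-a_0\ge 0$ to exclude two positive and one negative root), so you are right to rest the proof on the monotone-$g$ argument rather than on it.
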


\begin{theorem}\label{thm:f3omega3}
	Consider a general EOS \eqref{express:enthalpy} satisfying \eqref{eq:EOScond}. 
	If ${\bf U} \in {\mathcal G}$, then 
 the positive root $\xi_c$ of $f_c(\xi)$ always lies in 
	the ``safe'' interval 
	$\Omega_3=(\xi_b,\xi_*]$. 
\end{theorem}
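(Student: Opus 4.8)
The plan is to establish the two containments $\xi_c > \xi_b$ and $\xi_c \le \xi_*$ separately, both driven by a single algebraic identity relating the cubic $f_c$ in \eqref{eq:f3} to the iterative function ${\mathcal F}$. First I would record that, with $\alpha_1 = B^2-E$ and the auxiliary functions in \eqref{eq:xicalpri}, every $\xi\in\Omega_1$ satisfies $\xi = D\,h(\xi)\,{\mathcal W}(\xi)$, hence $D^2/\xi^2 = 1/(h(\xi)^2{\mathcal W}(\xi)^2)$, so that
\begin{equation*}
	\frac{f_c(\xi)}{\xi^2} \;=\; \xi + \alpha_1 - \frac{B^2D^2+\tau^2}{2\xi^2} \;=\; {\mathcal F}(\xi) + p(\xi) + \frac{B^2\big(h(\xi)^2-1\big)}{2\,h(\xi)^2\,{\mathcal W}(\xi)^2}.
\end{equation*}
From the proof of Theorem \ref{thm:PCPset} we have $p(\xi)>0$ and $h(\xi)>1$ for all $\xi\in\Omega_2$, so this identity immediately gives $f_c(\xi) > \xi^2{\mathcal F}(\xi)$ on $\Omega_2$, while both correction terms vanish as $\xi\to\xi_b^+$.

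For the left endpoint, I would evaluate (or pass to the limit of) the identity at $\xi=\xi_b$: since $f_b(\xi_b)=0$ is equivalent to $D{\mathcal W}(\xi_b)=\xi_b$, we get $h(\xi_b)=1$ and $p(\xi_b)=0$ (by continuity of ${\mathcal P}$ and the last condition in \eqref{eq:EOScond}), so the correction terms drop and $f_c(\xi_b) = \xi_b^2\,{\mathcal F}(\xi_b)$. Because $\xi_b>\xi_a\ge 0$ and ${\mathcal F}(\xi_b)<0$ by Theorem \ref{thm:m3As}, this yields $f_c(\xi_b)<0$; Lemma \ref{lem:f3oneroot}, which locates the sign change of $f_c$ exactly at $\xi_c$, then forces $\xi_b<\xi_c$.

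For the right endpoint, the previous step places $\xi_c\in\Omega_2$, so plugging $\xi=\xi_c$ into the identity and using $f_c(\xi_c)=0$, $p(\xi_c)>0$, and nonnegativity of the last term gives ${\mathcal F}(\xi_c)<0={\mathcal F}(\xi_*)$. Strict monotonicity of ${\mathcal F}$ on $\Omega_2$ (Theorem \ref{thm:m3As}) then implies $\xi_c<\xi_*$. Combining the two bounds, $\xi_c\in(\xi_b,\xi_*)\subseteq\Omega_3$, which is the claim.

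I expect the only subtle point to be the boundary evaluation at $\xi_b$: one must argue cleanly that ${\mathcal W}(\xi_b)=\xi_b/D$ and $p(\xi_b)=0$, either by direct substitution (immediate for the $\gamma$-law EOS, where ${\mathcal P}(\rho,h)=\tfrac{\gamma-1}{\gamma}\rho(h-1)$) or by passing to the one-sided limit $\xi\to\xi_b^+$ in the identity, using continuity of ${\mathcal F}$, ${\mathcal W}$, $h$ on $\Omega_1$ together with $\lim_{h\to1^+}{\mathcal P}(\rho,h)=0$. Everything else reduces to combining the structural facts about $f_a$, $f_b$, $f_c$, and ${\mathcal F}$ already assembled in Theorem \ref{thm:m3As}, Theorem \ref{thm:PCPset}, and Lemma \ref{lem:f3oneroot}.
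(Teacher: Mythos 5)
Your proposal is correct, and it overlaps with the paper's argument on one half while genuinely improving on the other. For the upper bound $\xi_c<\xi_*$, you and the paper use the same underlying identity $f_c(\xi)=\xi^2\bigl({\mathcal F}(\xi)+p(\xi)\bigr)+\tfrac{B^2}{2{\mathcal W}^2(\xi)}\bigl(\xi^2-D^2{\mathcal W}^2(\xi)\bigr)$, just evaluated at mirror points: the paper plugs in $\xi_*$ (where ${\mathcal F}=0$) to conclude $f_c(\xi_*)>0$ and then invokes Lemma~\ref{lem:f3oneroot}, whereas you plug in $\xi_c$ (where $f_c=0$) to conclude ${\mathcal F}(\xi_c)<0$ and then invoke monotonicity of ${\mathcal F}$; these are equivalent. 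The real divergence is the lower bound $\xi_b<\xi_c$: the paper does not prove it at all but cites inequality (2.17) of an earlier reference, while you derive it self-containedly by evaluating the same identity at $\xi_b$, where $f_b(\xi_b)=0$ forces $D{\mathcal W}(\xi_b)=\xi_b$, hence $h(\xi_b)=1$ and $p(\xi_b)=0$, so $f_c(\xi_b)=\xi_b^2{\mathcal F}(\xi_b)<0$ by Theorem~\ref{thm:m3As}, and Lemma~\ref{lem:f3oneroot} gives $\xi_b<\xi_c$. This buys a proof that uses only facts already assembled in this paper, at the cost of the one subtlety you correctly flag --- justifying $p(\xi_b)=0$ for a general EOS, which is handled either by the continuous extension $\lim_{h\to1^+}{\mathcal P}(\rho,h)=0$ already used in the proof of Theorem~\ref{thm:PCPset}, or more cleanly by noting that ${\mathcal F}(\xi)+p(\xi)$ contains no occurrence of ${\mathcal P}$ and is therefore continuous up to $\xi_b$. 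Your argument in fact yields the slightly stronger conclusion $\xi_c\in(\xi_b,\xi_*)$.
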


\begin{proof} 
	According to Theorem \ref{thm:m3As}, we have 
	$$f_b(\xi_*)=\mathcal W^{-2}(\xi_*^2-D^2\mathcal W^2(\xi_*))(\xi_*+B^2)^2>0,$$
	which yields $\xi_*>D\mathcal W(\xi_*)$. 
	We observe that 
	\begin{align*}
		0= \xi_*^2{\mathcal F}(\xi_*)=&\xi_*^2\left(\xi_*-\mathcal{P}\left(\frac{D}{\mathcal W(\xi_*)},\frac{\xi_*}{D\mathcal W(\xi_*)}\right)+B^2-\frac{1}{2}\left(\frac{B^2}{\mathcal W^2(\xi_*)}+\frac{\tau^2}{\xi_*^2}\right)-E\right)\\
		=&\xi_*^3+(B^2-E)\xi_*^2-\frac{1}{2}\left(\frac{B^2\xi_*^2}{\mathcal W^2(\xi_*)}+\tau^2\right)-p \xi_*^2\\
		=&f_c(\xi_*)-\frac{B^2}{2\mathcal W^2(\xi_*)}(\xi_*^2-D^2\mathcal W^2(\xi_*))-p\xi_*^2,
	\end{align*}
	where $p>0$ denotes the exact pressure corresponding to ${\bf U}$. 
It follows that 
	$$f_c(\xi_*)=\frac{B^2}{2\mathcal W^2(\xi_*)}(\xi_*^2-D^2\mathcal W^2(\xi_*))+p \xi_*^2> \frac{B^2}{2\mathcal W^2(\xi_*)}(\xi_*^2-D^2\mathcal W^2(\xi_*)) > 0.$$
	By Lemma \ref{lem:f3oneroot}, we obtain $\xi_* > \xi_c$. 
	Since $\mathbf U\in\mathcal{G}$, from the inequality (2.17) in \cite{wu2017admissible}, we have $\xi_b<\xi_c$ (which corresponds to $\xi_4(\mathbf U)<\xi_3(\mathbf U)$ for the notations in \cite{wu2017admissible}). 
	Combining $\xi_*>\xi_c$ with $\xi_b<\xi_c$, we obtain $\xi_c\in\Omega_3=(\xi_b,\xi_*].$ The proof is completed. 
\end{proof}


According to a variant \cite{fan1989cubic} of Cardano's formula for cubic equations, we obtain a real analytical expression for $\xi_c$, as summarized in \eqref{eq:xi_c}.

\begin{theorem}\label{thm:expressxic} 
	Consider a general EOS \eqref{express:enthalpy} satisfying \eqref{eq:EOScond}. 
	Define $\delta=27a_0+4 \alpha_1^3$ with $\alpha_1=B^2-E$ and $a_0=-0.5(B^2D^2+\tau^2)$.  
	\begin{itemize}[leftmargin=*]
		\item If $\delta>0$, then $\xi_c=-\frac{\alpha_1}{3}\left(1-2\cos\left(\frac{\theta}{3}-\frac{\pi}{3}\right)\right)$, where $\theta=\arccos\left(1+\frac{13.5a_0}{\alpha_1^3}\right)$.
		\item If $\delta\le 0$, then $\xi_c=-\frac13{\left(\alpha_1+\sqrt[3]{X_1+X_2}+\sqrt[3]{X_1-X_2}\right)}$, where $X_1=\alpha_1^3+13.5a_0$ and $X_2=1.5\sqrt{3a_0\delta}$.
	\end{itemize}
\end{theorem}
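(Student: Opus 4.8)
The plan is to reduce $f_c$ to a depressed cubic, apply the appropriate classical root formula according to the sign of the discriminant, and then invoke Lemma \ref{lem:f3oneroot} to single out the positive root. Writing $f_c(\xi)=\xi^3+\alpha_1\xi^2+a_0$ with $\alpha_1=B^2-E$ and $a_0=-\tfrac12(B^2D^2+\tau^2)$, I would first substitute $\xi=y-\alpha_1/3$ to remove the quadratic term, yielding $y^3+py+q=0$ with $p=-\alpha_1^2/3$ and $q=\tfrac{2\alpha_1^3}{27}+a_0$. A short computation gives the discriminant $\Delta=-4p^3-27q^2=-a_0(4\alpha_1^3+27a_0)=-a_0\,\delta$; since ${\bf U}\in{\mathcal G}$ implies $D>0$ and hence $a_0\le 0$, the sign of $\Delta$ coincides with that of $\delta$. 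Moreover $a_0=0$ forces $B=0$, in which case $\alpha_1=-E<0$ and $\delta=4\alpha_1^3<0$, so the case $\delta>0$ always has $a_0<0$. This dichotomy is exactly the split in the theorem.

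In the case $\delta>0$ we have $\Delta>0$ (three distinct real roots), and $\delta>0$ with $a_0\le 0$ forces $\alpha_1>0$, so $\alpha_1^3\neq 0$. I would use Vi\`ete's trigonometric substitution $y=\tfrac{2\alpha_1}{3}\cos\psi$: plugging in and using $\cos 3\psi=4\cos^3\psi-3\cos\psi$ collapses $y^3+py+q=0$ to $\cos 3\psi=-\bigl(1+\tfrac{13.5\,a_0}{\alpha_1^3}\bigr)$. Here $\theta:=\arccos\bigl(1+\tfrac{13.5\,a_0}{\alpha_1^3}\bigr)$ is well defined because $\bigl|1+\tfrac{13.5\,a_0}{\alpha_1^3}\bigr|\le 1$ — the upper bound follows from $a_0\le 0$, the lower bound is equivalent to $\delta\ge 0$ — and strictness of $a_0<0$, $\delta>0$ puts $\theta\in(0,\pi)$. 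Taking the branch $3\psi=\pi-\theta$, i.e.\ $\psi=\tfrac{\pi}{3}-\tfrac{\theta}{3}$, gives $y=\tfrac{2\alpha_1}{3}\cos\bigl(\tfrac{\theta}{3}-\tfrac{\pi}{3}\bigr)$ and hence $\xi_c=y-\tfrac{\alpha_1}{3}=-\tfrac{\alpha_1}{3}\bigl(1-2\cos(\tfrac{\theta}{3}-\tfrac{\pi}{3})\bigr)$. Since $\psi\in(0,\pi/3)$ we have $\cos\psi\in(1/2,1)$, so $\xi_c>\tfrac{2\alpha_1}{3}\cdot\tfrac12-\tfrac{\alpha_1}{3}=0$; being a positive root of $f_c$, it equals \emph{the} positive root by Lemma \ref{lem:f3oneroot}.

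In the case $\delta\le 0$ we have $\Delta\le 0$, so $f_c$ has exactly one real root (with a double real root when $\Delta=0$). I would apply the real form of Cardano's formula to $y^3+py+q=0$: the radicand $q^2/4+p^3/27=a_0\delta/108\ge 0$ (again using $a_0\le 0$, $\delta\le 0$), so both cube roots are real, and bookkeeping identifies $-q/2\pm\sqrt{q^2/4+p^3/27}$ with $-(X_1\mp X_2)/27$ for $X_1=\alpha_1^3+13.5\,a_0$ and $X_2=1.5\sqrt{3a_0\delta}$, whence $y=-\tfrac13\bigl(\sqrt[3]{X_1+X_2}+\sqrt[3]{X_1-X_2}\bigr)$ and $\xi_c=-\tfrac13\bigl(\alpha_1+\sqrt[3]{X_1+X_2}+\sqrt[3]{X_1-X_2}\bigr)$. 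Since this is the \emph{unique} real root of $f_c$ while Lemma \ref{lem:f3oneroot} guarantees a real positive root exists, the two coincide; in the boundary subcase $\Delta=0$ one only needs the standard fact that the formula then returns the simple root, which by the sign pattern in Lemma \ref{lem:f3oneroot} is precisely $\xi_c$.

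The main obstacle is not the algebra of the reduction but the careful root selection: one must verify the hidden inequality $\bigl|1+13.5\,a_0/\alpha_1^3\bigr|\le 1$ (whose nontrivial half is equivalent to $\delta\ge 0$), track the sign of $\alpha_1$ throughout — which is why $\alpha_1>0$ must first be deduced from $\delta>0$ — and pick the single correct branch among the three real roots in the trigonometric case, and symmetrically confirm that the real Cardano root, not the spurious double root, is the positive one when $\Delta=0$. Every one of these identifications is clinched by Lemma \ref{lem:f3oneroot}, the one prior result the argument genuinely relies on.
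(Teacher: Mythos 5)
Your proposal is correct and follows essentially the same route as the paper: a case split on the sign of $\delta$, the trigonometric (Vi\`ete) form of the roots when $\delta>0$, the real Cardano form when $\delta\le 0$, and Lemma \ref{lem:f3oneroot} to single out the unique positive root. The only cosmetic difference is that you derive the root formulas from the depressed cubic (and verify $|1+13.5a_0/\alpha_1^3|\le 1$ and the branch selection explicitly), whereas the paper cites a variant of Cardano's formula and, in the boundary case $\delta=0$, identifies the simple root by explicitly computing the double root $\tfrac{9a_0}{2\alpha_1^2}<0$.
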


\begin{proof}
	 We consider the following three cases.
	\begin{itemize}[leftmargin=*]
		\item If $\delta=27a_0+4 \alpha_1^3=-13.5(B^2D^2+\tau^2)+4 \alpha_1^3>0$, then $\alpha_1>0$, and 
		$f_c(\xi)$ has three real roots $-\frac{\alpha_1}{3}\left(1-2\cos\left(\frac{\theta}{3}\pm\frac{\pi}{3}\right)\right)$ and $-\frac{\alpha_1}{3}\left(1+2\cos\left(\frac{\theta}{3}\right)\right)$, where $\theta=\arccos\left(1+\frac{13.5a_0}{\alpha_1^3}\right)$. 
		Since $\theta\in[0,\pi]$, we have $1-2\cos\left(\frac{\theta}{3}-\frac{\pi}{3}\right)<0$. Thus   $-\frac{\alpha_1}{3}\left(1-2\cos\left(\frac{\theta}{3}-\frac{\pi}{3}\right)\right)>0$. Recall that $f_c(\xi)$ has only one positive root $\xi_c$. Therefore,   $\xi_c=-\frac{\alpha_1}{3}\left(1-2\cos\left(\frac{\theta}{3}-\frac{\pi}{3}\right)\right)$. 
		\item If $\delta<0$, then $f_c(\xi)$ has two non-real complex conjugate roots and one real root $\xi_c$, which can be expressed as  %
		$-\frac13{\left(\alpha_1+\sqrt[3]{X_1+X_2}+\sqrt[3]{X_1-X_2}\right)}$, according to \cite{fan1989cubic}.
		\item  If $\delta=0$, then $f_c(\xi)$ has three real roots, with two being repeated roots $\frac{9a_0}{2\alpha_1^2}$ and one being a single root $-\alpha_1-\frac{9a_0}{\alpha_1^2}$. Note that $a_0=-0.5(B^2D^2+\tau^2)<0$, implying that $\frac{9a_0}{2\alpha_1^2}<0$. Hence, the unique positive root $\xi_c$ must be $-\alpha_1-\frac{9a_0}{\alpha_1^2}$, which can also be expressed as $-\frac13{\left(\alpha_1+\sqrt[3]{X_1+X_2}+\sqrt[3]{X_1-X_2}\right)}$.
	\end{itemize}
\end{proof}

Theorem \ref{thm:f3omega3} indicates that 
the unique positive root $\xi_c$ of $f_c(\xi)$ always lies in 
the ``safe'' interval 
$\Omega_3=(\xi_b,\xi_*]$. According to Theorem \ref{thm:initialset}, 
if we take 
 the initial guess $\xi_0= \xi_c$, then the 
NR method \eqref{eq:NR} is always PCP and convergent. 
Although $\xi_c$ can be computed by using the analytical formula \eqref{eq:xi_c} (proven in Theorem \ref{thm:expressxic}), it involves expensive trigonometric operations and cubic root calculations. 
To reduce the computational cost, we find a cheaper initial value $\xi_d$ defined in \eqref{eq:xi_d}, which satisfies the following relation \eqref{eq:4xi}.

\begin{theorem}\label{lem:xid}
	For a general EOS \eqref{express:enthalpy} satisfying \eqref{eq:EOScond} and any ${\bf U} \in {\mathcal G}$, it holds 
\begin{equation}\label{eq:4xi}
		\xi_d>\xi_c>\xi_b>\xi_a.
\end{equation}
\end{theorem}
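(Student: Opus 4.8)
The plan is to prove the chain $\xi_d > \xi_c > \xi_b > \xi_a$ by treating the three inequalities separately; two of them are already in hand. The rightmost inequality $\xi_b > \xi_a$ is immediate from Theorem~\ref{thm:m3As}, since $\xi_b$ is defined there as the unique root of $f_b$ lying in $\Omega_1 = (\xi_a, +\infty)$. The middle inequality $\xi_c > \xi_b$ is part of the conclusion of Theorem~\ref{thm:f3omega3}, which asserts $\xi_c \in \Omega_3 = (\xi_b, \xi_*]$ (this in turn rests on inequality~(2.17) of \cite{wu2017admissible}). Hence the only genuinely new content is the leftmost inequality $\xi_d > \xi_c$, to which the rest of the argument is devoted.

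To prove $\xi_d > \xi_c$ I would invoke Lemma~\ref{lem:f3oneroot}: since $f_c < 0$ on $(0,\xi_c)$, $f_c(\xi_c)=0$, and $f_c > 0$ on $(\xi_c,+\infty)$, it suffices to check that (i) $\xi_d > 0$ and (ii) $f_c(\xi_d) > 0$. For (i), note first that $\mathbf{U}\in\mathcal{G}$ forces $E^2 - (D^2+m^2) > 0$ (from $E - \sqrt{D^2+m^2} > 0$ together with $E>0$), so that $\Phi(\mathbf{U}) = \sqrt{\alpha_1^2 + 3(E^2 - D^2 - m^2)} > |\alpha_1|$, whence $\Phi(\mathbf{U}) + \alpha_1 > 0$. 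Then the admissibility constraint $\Psi(\mathbf{U}) > 0$ in \eqref{express:Uphysical}, rewritten as $(\Phi(\mathbf{U}) - 2\alpha_1)\sqrt{\Phi(\mathbf{U})+\alpha_1} > \sqrt{\tfrac{27}{2}(D^2B^2+\tau^2)} \ge 0$, together with $\sqrt{\Phi(\mathbf{U})+\alpha_1} > 0$, forces $\Phi(\mathbf{U}) - 2\alpha_1 > 0$; hence $\xi_d = \tfrac{1}{3}(\Phi(\mathbf{U}) - 2\alpha_1) > 0$.

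For (ii), substitute $\xi_d = \tfrac{1}{3}(\Phi(\mathbf{U}) - 2\alpha_1)$ into $f_c(\xi) = \xi^3 + \alpha_1\xi^2 + a_0$ with $a_0 = -\tfrac{1}{2}(B^2D^2+\tau^2)$. Writing $f_c(\xi_d) = \xi_d^2(\xi_d + \alpha_1) + a_0$ and using $\xi_d + \alpha_1 = \tfrac{1}{3}(\Phi(\mathbf{U}) + \alpha_1)$ gives $f_c(\xi_d) = \tfrac{1}{27}(\Phi(\mathbf{U}) - 2\alpha_1)^2(\Phi(\mathbf{U})+\alpha_1) + a_0$, so $f_c(\xi_d) > 0$ is equivalent to $(\Phi(\mathbf{U}) - 2\alpha_1)^2(\Phi(\mathbf{U})+\alpha_1) > -27a_0 = \tfrac{27}{2}(D^2B^2+\tau^2)$. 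This last inequality is exactly what one gets by squaring the constraint $\Psi(\mathbf{U}) > 0$ in the form displayed above, which is legitimate because both sides there are non-negative. Hence $f_c(\xi_d) > 0$, and combining (i), (ii), and Lemma~\ref{lem:f3oneroot} yields $\xi_d > \xi_c$. Assembling the three inequalities completes the proof.

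I expect the only real difficulty to be sign-bookkeeping rather than anything deep: one must carefully extract from $\mathbf{U}\in\mathcal{G}$ both $\Phi(\mathbf{U}) + \alpha_1 > 0$ (so the square root in $\Psi$ is real and positive, and squaring is reversible) and $\Phi(\mathbf{U}) - 2\alpha_1 > 0$ (so that $\xi_d > 0$ and squaring $\Psi(\mathbf{U}) > 0$ preserves the inequality direction). Once these positivity facts are secured, the identification of $f_c(\xi_d)$ with the squared master inequality is a one-line computation. As an alternative, the full chain \eqref{eq:4xi} can also be read off from inequality~(2.17) of \cite{wu2017admissible} after matching notation, but the self-contained route above is preferable here.
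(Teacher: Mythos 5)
Your proposal is correct. The decomposition into three inequalities matches the paper exactly, and for $\xi_b>\xi_a$ and $\xi_c>\xi_b$ you invoke precisely the same sources (Theorem \ref{thm:m3As} and Theorem \ref{thm:f3omega3}). The one place you diverge is the leftmost inequality $\xi_d>\xi_c$: the paper disposes of it in a single line by citing Eq.~(2.21) of \cite{wu2017admissible} (under the notational correspondence $\xi_3(\mathbf U)<\xi_{2,R}(\mathbf U)$), whereas you give a self-contained derivation directly from the admissibility condition $\Psi(\mathbf U)>0$ in \eqref{express:Uphysical}. Your argument checks out: $E>\sqrt{D^2+m^2}$ gives $\Phi(\mathbf U)>|\alpha_1|$, hence $\Phi+\alpha_1>0$; then $\Psi>0$ forces $\Phi-2\alpha_1>0$, so $\xi_d>0$; the identity $f_c(\xi_d)=\tfrac{1}{27}(\Phi-2\alpha_1)^2(\Phi+\alpha_1)+a_0$ is a correct one-line computation; and squaring $\Psi>0$ (legitimate since both sides are non-negative) yields exactly $f_c(\xi_d)>0$, whence $\xi_d>\xi_c$ by the sign pattern in Lemma \ref{lem:f3oneroot}. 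What your route buys is a proof readable without opening the cited reference, and it makes transparent that $\xi_d>\xi_c$ is literally the condition $\Psi(\mathbf U)>0$ in disguise; what it costs is a bit of sign-bookkeeping that the paper outsources. Either way the theorem stands.
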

\begin{proof}
	It was shown in \cite[Eq.~(2.21)]{wu2017admissible} that $\xi_c<\xi_d$ (which corresponds to $\xi_3(\mathbf U)<\xi_{2,R}(\mathbf U)$ for the notations in \cite{wu2017admissible}). 
	Recall that $\xi_b>\xi_a$ (proven in Theorem \ref{thm:m3As}) and $\xi_c>\xi_b$ (proven in Theorem \ref{thm:f3omega3}). 
	Combining these results completes the proof. 
\end{proof}

Although $\xi_d$ is computationally cheaper than $\xi_c$ and satisfies $\xi_d> \xi_b$, 
there are a few instances where $\xi_d$ may be larger than $\xi_*$ and thus fall outside the ``safe'' interval $\Omega_3 = (\xi_b, \xi_*]$. In our random tests, this scenario occurs approximately in 19\% of cases. 
To optimize efficiency while maintaining the PCP property and convergence, we propose the following hybrid strategy for selecting the initial estimate $\xi_0$:
\begin{enumerate}
	\item If ${\mathcal F}(\xi_d)\le 0$, then $\xi_d \le \xi_*$ due to the monotonicity of ${\mathcal F}(\xi)$ in Theorem \ref{thm:m3As}. In this case, $\xi_d \in \Omega_3=(\xi_b,\xi_*]$, and we take the initial guess $\xi_0=\xi_d$. 
	\item If ${\mathcal F}(\xi_d) > 0$, then we choose the initial guess  $\xi_0=\xi_c$.
\end{enumerate}
This hybrid strategy leads to our robust and efficient initial guess defined in \eqref{eq:initial}, ensuring the efficiency, PCP property, and convergence.


\subsection{Insights for developing other PCP solvers for primitive variables}
	While the focus of this paper is on the NR method, our findings extend beyond this specific approach and are broadly applicable to the development of other root-finding algorithms for computing the root $\xi_*$ of ${\mathcal F}(\xi)$. For instance, we have found $\xi_c$ as a computable lower bound for $\xi_*$. Moreover, as indicated by Proposition \ref{prop:upper}, $2 E - B^2$ serves as a simple upper bound for $\xi_*$. The bounded interval $(\xi_c, 2 E-B^2)$ lies within $\Omega_2$, ensuring adherence to the physical constraints \eqref{eq:Qphysical}, as proven in Theorem \ref{thm:PCPset}. These findings pave the way for the design of other PCP convergent methods, such as the bisection or Brent's algorithms, for robust recovery of primitive variables in RMHD.

\begin{Prop}\label{prop:upper}
	For a general EOS \eqref{express:enthalpy} satisfying \eqref{eq:EOScond} and any ${\bf U} \in {\mathcal G}$, we have $\xi_* < 2E-|{\bf B}|^2$.
\end{Prop}

\begin{proof}
	Recalling that $\xi_* = \rho h W^2$, we have 
\begin{align*}
		(2E - |{\bf B}|^2) - \xi_* & = 2 \left( \rho h W^2 - p + \frac{1}{2} |{\bf v}|^2
		|{\bf B}|^2 - \frac12 ({\bf v} \cdot {\bf B})^2  \right)  - \xi_*  \\
		& \ge 2 ( \rho h W^2 - p ) - \xi_* 
		= \rho h W^2 - 2 p \ge \rho {\mathcal H}(\rho,p) - 2p > 0,
\end{align*}
	where the second condition in \eqref{eq:EOScond} has been used in the last step. 
\end{proof}


\subsection{Broad applicability of PCP NR method}
	The proposed PCP NR method demonstrates notable versatility and can be seamlessly integrated into any conservative numerical scheme for RMHD. As an application, we will combine it with the provably PCP discontinuous Galerkin (DG) schemes proposed in \cite{wu2017admissible,wu2021provably}. These DG schemes, incorporating a PCP limiter, were rigorously proven to maintain the admissibility of the computed conservative variables, ${\bf U} \in {\mathcal G}$, for both cell averages and nodal point values. This pivotal attribute ensures the existence and uniqueness of the physical root $\xi_*$ of ${\mathcal F}(\xi)$, thereby theoretically guaranteeing the existence and uniqueness of the corresponding physical primitive vector. Thanks to the inherent PCP property and its convergence, our robust NR method ensures that the recovered primitive variables are physically admissible. 
	This integration leads to the fully PCP schemes, ensuring all the computational processes in RMHD adhere to physical constraints \eqref{eq:Qphysical}. 

\section{Numerical Tests}\label{sec:numerical_tests}
This section presents several numerical experiments to validate the accuracy, efficiency, and PCP property of the PCP NR method. We compare this method's performance with other primitive variable solvers from the literature. For simplicity, we use the following abbreviations: ``PCP NR" for our NR method, ``NH-FP" and ``NH-FP-Aitken" for the fixed-point iteration algorithms without/with Aitken acceleration from \cite{newman2014primitive}, ``PL-Brent" for Brent's method from \cite{palenzuela2015effects}, ``NG-2DNR" for the 2D NR method from \cite{noble2006primitive}, ``MM-1DNR" for the 1D NR method from \cite{mignone2007equation}, ``GR-2DNR" for the 2D NR method from \cite{giacomazzo2007whiskymhd}, and ``CF-2DNR" for the 2D NR method from \cite{cerda2008new}. We set the accuracy threshold $\epsilon_{\tt target}=10^{-14}$ and cap the maximum iterations at $500$ for each algorithm. All tests are implemented in C++ with double precision and conducted on a Linux server with an Intel(R) Xeon(R) Gold 5318Y CPU at 2.10 GHz.

To assess the above algorithms' reliability and robustness, we define specific criteria. An algorithm is considered to {\em fail} in a test if it diverges, produces non-real complex numbers, or the final result is a nonphysical primitive vector violating constraints \eqref{eq:Qphysical}. Additionally, we categorize the iterative process as {\em non-PCP} if it yields quantities corresponding to a nonphysical primitive vector at any iteration stage.

\subsection{Random tests for $\gamma$-law EOS}\label{sec:randomtestgamma}

To validate the accuracy, efficiency, and robustness of our PCP NR method for a wide range of cases, we conduct two sets of random tests with $\gamma=1+U_{\tt rand}$. Here and hereafter  
 $U_{\tt rand}$ denotes the uniform random
variables independently generated in $[0,1]$; {\em the values of $U_{\tt rand}$ vary at different locations.} 
In our experiments, we first randomly generate primitive variables $\mathbf Q = (\rho, \mathbf v^\top, \mathbf B^\top, p)$ and then calculate the corresponding conservative variables $\mathbf{U}$. 
Subsequently, we apply the primitive variable solvers to recompute/recover the approximate primitive variables $\mathbf Q' = (\rho', \mathbf v'^\top, \mathbf B^\top, p')$ from $\mathbf{U}$. Numerical errors are then assessed using $|\mathbf v' - \mathbf v|$. 
In the first set of random tests, we generate primitive variables as follows:
\begin{equation}\label{test:rand1}
	\begin{cases}
		\rho=1000 U_{\tt rand}+10^{-11},\\
		\mathbf v=(1-10^{-10})U_{\tt rand}\mathbf u/|\mathbf u| \mbox{  with  } \mathbf u=2(U_{\tt rand}, U_{\tt rand}, U_{\tt rand})^\top-(1,1,1)^\top,\\
		p=1000 U_{\tt rand}+10^{-11},\\
		\mathbf B=200(U_{\tt rand}, U_{\tt rand}, U_{\tt rand})^\top-100(1,1,1)^\top.
	\end{cases}
\end{equation}
The second set of random tests involves low density/pressure and high Lorentz factors:
\begin{equation}\label{test:rand3}
\begin{cases}
	\rho=0.01 U_{\tt rand}+10^{-13},\\
	\mathbf v=((0.01-10^{-16})U_{\tt rand}+0.99)\mathbf u/|\mathbf u|  \mbox{  with  }  \mathbf u=2(U_{\tt rand}, U_{\tt rand}, U_{\tt rand})^\top-(1,1,1)^\top,\\
	p=0.01 U_{\tt rand}+10^{-13},\\
	\mathbf B=20(U_{\tt rand}, U_{\tt rand}, U_{\tt rand})^\top-10(1,1,1)^\top.
\end{cases}
\end{equation}

\begin{table}[!t]
	\centering
	\captionsetup{font=small}
	\caption{The first set of random tests: non-PCP iteration processes, algorithm failure counts, average/maximum 
		iteration counts,  average/maximum errors, and average CPU time ($\times 10^{-7}$s) of successful iterations, in $10^8$ independent random experiments. 
	}\label{table:recovering algorithms tests1:100}
	
	\begingroup
\setlength{\tabcolsep}{2.6666pt} 
\renewcommand{\arraystretch}{1.2} 

\centering

\small
\begin{tabular}{ccccccccc}
	\bottomrule[1.0pt]
	algorithm & non-PCP \#  & failure \#  & ave ite \# & max ite \# & ave error  & max error  & CPU time \\ \hline
		PCP NR  &0       & 0    & 4.8 & 15 & 3.1e-16       & 2.5e-12    & 4.69                             \\
		NH-FP-Aitken& 0         & 0 & 9.0   & 500 & 4.8e-16         & 1.7e-12 & 9.23                            \\
		NH-FP& 0          & 0    & 34.4     & 500  & 2.7e-13          & 1.1e-05    & 35.25                       \\
		PL-Brent& 3.8e7         & 0    & 9.7   & 30   & 3.4e-16         & 1.9e-12    & 13.45                          \\
		MM-1DNR& 1.5e3         & 0    & 6.5   & 268   & 2.9e-16         & 1.9e-12    & 5.12                     \\ 
		NG-2DNR& 4.9e5         & 1.0e5    & 4.6      & 66    & 2.8e-16         & 2.1e-12    & 8.61                    \\ 
		GR-2DNR& 8.2e6        & 5.0e5    & 9.7     & 500   & 2.9e-16         & 2.8e-12    & 5.71                     \\ 
		CF-2DNR& 7.2e6         & 2.1e4    & 5.1     & 500   & 2.9e-16         & 4.0e-12    & 3.49                     \\ 
		
		\toprule[1.0pt]
	\end{tabular}
	
	\endgroup
	
\end{table}

\begin{table}[!t]
	\centering
	\captionsetup{font=small}
	\caption{Same as Table \ref{table:recovering algorithms tests1:100} except for the second set of random tests.}\label{table:recovering algorithms tests3:1}
	
	\begingroup
\setlength{\tabcolsep}{2.6666pt} 
\renewcommand{\arraystretch}{1.2} 

\centering

\small
		\begin{tabular}{ccccccccc}
	\bottomrule[1.0pt]
	algorithm & non-PCP \#  & failure \#  & ave ite \# & max ite \# & ave error  & max error  & CPU time \\ \hline
	PCP NR  &0       & 0    & 5.0 & 17 & 3.6e-14       & 3.3e-10    & 4.65                             \\
	NH-FP-Aitken& 1         & 1 & 12.4   & 500 &1.1e-13         & 1.7e-06 & 12.36                            \\
	NH-FP& 0          & 0    & 55.0     & 500  & 2.0e-09          & 9.0e-04    & 57.69                       \\
	PL-Brent& 9.8e7         & 0    & 25.8   & 51   & 4.1e-14         & 6.6e-10    & 30.13                          \\
	MM-1DNR& 6.4e2         & 0    & 6.9   & 454 & 4.1e-14         & 5.3e-10    & 5.26                      \\ 
	NG-2DNR& 5.9e7         & 5.6e7    & 15.7   & 500  & 4.2e-14         & 2.6e-10    & 21.71                    \\ 
	GR-2DNR& 5.6e7        & 5.5e7    & 234.3     & 500   & 4.3e-14         & 3.2e-10    & 83.28                     \\ 
	CF-2DNR& 2.1e7         & 2.0e7    & 203.0     & 500   & 5.3e-14         & 3.5e-07    & 65.67                     \\ 
	
	\toprule[1.0pt]
\end{tabular}
	
	\endgroup
	
\end{table}

Our PCP NR method uses the robust initial guess \eqref{eq:initial}. 
The selection of reliable initial guesses for the 2D NR methods (CF-2DNR, GR-2DNR, and NG-2DNR) 
is important but currently lacks both theoretical and empirical guidance. A prevalent strategy in the literature is to take the initial value from the previous time step in numerical evolution  \cite{gammie2003harm,cerda2008new,noble2006primitive,siegel2018recovery}.  
However, the effectiveness of this strategy is unpredictable and challenging to evaluate \cite{kastaun2021robust} in our random tests, as the initial guess is not chosen in a deterministic way.  
Therefore, in our random tests, we introduce a (small) 10\% random perturbation to the exact primitive variables 
to serve as the initial guess for these 2D NR methods. 
For the CF-2DNR method, if the random initial guess causes failure, we then restart the NR iterations  
with another initial guess proposed in \cite{cerda2008new}.

The results of our tests, which consist of $10^8$ independent random
experiments, are presented in Tables \ref{table:recovering algorithms tests1:100} and \ref{table:recovering algorithms tests3:1}. 
We  
enumerate the total counts of non-PCP iteration processes and algorithmic failures. Additionally, these tables include average and maximum iteration counts, average and maximum numerical errors, and the average CPU time for successful iterations of the $10^8$ experiments. 
We observe that the PCP NR and NH-FP methods consistently maintain PCP throughout the iterations and exhibit no failures in all tests, underscoring their exceptional robustness. NH-FP-Aitken is generally robust but fails in only one test. 
The PL-Brent and MM-1DNR methods, although not always PCP, do not fail in these test cases, successfully yielding physical primitive variables. In contrast, the 2D NR methods (CF-2DNR, GR-2DNR, and NG-2DNR) 
demonstrate a lack of robustness, frequently failed to maintain PCP, and encountered numerous failures.
Notably, the PCP NR method consistently delivers accurate results and emerges as the most efficient among all tested algorithms. It requires the fewest iteration steps and generally the least CPU time. 
The experimental results indicate that the PCP NR method outperforms the others in terms of robustness, efficiency, and accuracy.

Furthermore, our results indicate that in about 81\% of the random test cases, the condition ${\mathcal F}(\xi_d) \le 0$ in \eqref{eq:initial} is satisfied in our PCP NR method. This implies that the complex computation of $\xi_c$ is unnecessary in these instances, thereby enhancing the efficiency of our method.



\subsection{Random tests for Mathews-EOS and RC-EOS}

Theorem \ref{thm:initialset_gEOS} demonstrates that the PCP property and the convergence of the proposed PCP NR method are provable under a specific assumption on ${\mathcal F}'(\xi)$. To validate this assumption, we conduct $10^8$ random experiments described in \eqref{test:rand1}–\eqref{test:rand3}, for the Mathews-EOS (\cite{mathews1971hydromagnetic})
${\mathcal H}(\rho,p)=\frac{5p}{2\rho}+\sqrt{\frac{9p^2}{4\rho^2}+1}$ 
and RC-EOS (\cite{Ryu_2006})
${\mathcal H}(\rho,p)=\frac{2(6p^2+4p\rho+\rho^2)}{\rho(3p+2\rho)}.$ 
Our extensive random tests have yielded two distinct structural patterns for ${\mathcal F}'(\xi)$. In the first scenario, ${\mathcal F}'(\xi)$ exhibits monotonic increase within the interval $\Omega_2 = (\xi_b, +\infty)$. In the second case, we observe an inflection point $\xi_{in} \in \Omega_2$, where ${\mathcal F}'(\xi)$ decreases monotonically over the interval $(\xi_b,\xi_{in}]$ and increases over $[\xi_{in},+\infty)$. Figure \ref{fig:Df} provides a graphical illustration of these findings. These results lend credence to the assumption regarding ${\mathcal F}'(\xi)$ in Theorem \ref{thm:initialset_gEOS}. However, a rigorous proof of this assumption for a general EOS remains elusive. 
To validate the effectiveness of the PCP NR method for the Mathews-EOS and RC-EOS, Table \ref{table:recovering algorithms tests:ryu's eos} presents the results of $10^8$ independent random tests across two distinct sets \eqref{test:rand1}–\eqref{test:rand3}. The results clearly show that our PCP NR algorithm consistently maintains the PCP property and successfully recovers the physical primitive variables, demonstrating its robustness for both the Mathews-EOS and RC-EOS.

\begin{table}[!t]
	\centering
	\captionsetup{font=small}
	\caption{Two sets of tests for Mathews-EOS and RC-EOS: non-PCP iteration, algorithm failure counts, average iteration counts, and average errors in $10^8$ independent random experiments.}\label{table:recovering algorithms tests:ryu's eos}
	
	\begingroup
	\setlength{\tabcolsep}{6pt} 
	\renewcommand{\arraystretch}{1} 
	
	\centering
	
	\small
	\begin{tabular}{cccccc}
	\bottomrule[1.0pt]		
	&EOS & non-PCP ite \# & failure \# & ave ite \# & ave error \\ \hline
	\multirow{2}{*}{Test 1}	&RC-EOS & 0 & 0 &4.9 & 2.7e-16 \\  
	
	&Mathews-EOS & 0 & 0	& 4.9  & 2.8e-16 \\ \hline
	
	\multirow{2}{*}{Test 2}	&RC-EOS & 0 & 0 &4.1 & 2.4e-14 \\  
	
	&Mathews-EOS & 0 & 0	& 4.1  & 2.5e-14 \\
	
	\toprule[1.0pt]
\end{tabular}
	
	\endgroup
	
\end{table}

\begin{figure}[!t]
	\centering
	\subfloat[$\gamma$-law EOS \eqref{eq:gamma-EOS}]{	\includegraphics[width=0.31\textwidth]{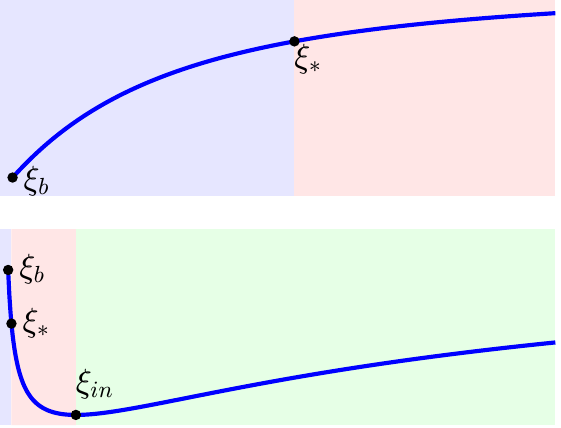} }~
	\subfloat[RC-EOS \cite{Ryu_2006}]{	\includegraphics[width=0.31\textwidth]{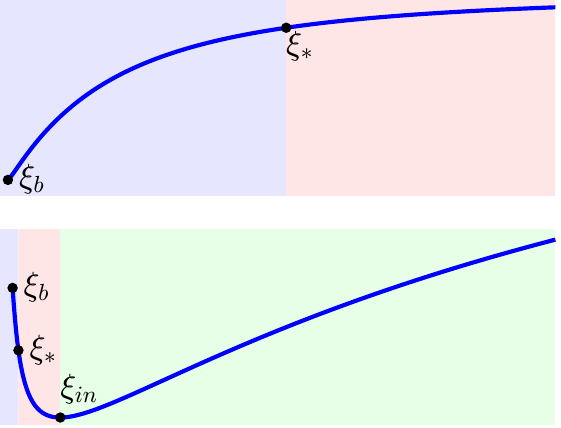} }~
	\subfloat[Mathews-EOS \cite{mathews1971hydromagnetic}]{	\includegraphics[width=0.31\textwidth]{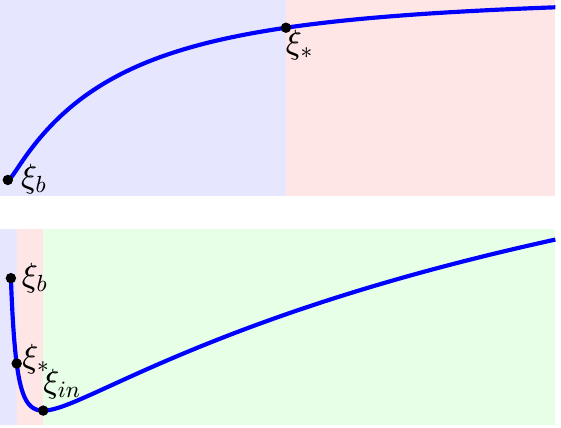} }~
	\caption{Two patterns of ${\mathcal F}'(\xi)$ observed in extensive random experiments for three EOSs.}\label{fig:Df}
\end{figure}

\subsection{Efficiency tests in simulating RMHD problems}

The PCP NR method is highly adaptable and can be seamlessly incorporated into any RMHD scheme involving primitive variable recovery. 
As an application, we have successfully implemented it into the provably PCP DG schemes   \cite{wu2017admissible,wu2021provably}, which were rigorously proven to preserve the computed  ${\bf U} \in {\mathcal G}$. By incorporating the PCP NR method into these DG schemes, we ensure that the primitive variables are also physically admissible. In this and the subsequent subsections, we present several demanding examples to validate the efficiency and robustness of the integrated PCP NR-DG schemes for RMHD simulations.

First, we consider two smooth problems. The initial condition for the first (1D) problem is  $\mathbf{Q}(x,0)=\left(1,0,v_2,v_3,1,\kappa v_2,\kappa v_3,10^{-2}\right)^\top$, 
where $v_2=0.99\sin(2\pi x)$, $v_3=0.99\cos(2\pi x)$, and $\kappa=\sqrt{1+\rho hW^2}$. The computational domain $[0,1]$ is divided into 640 uniform cells. 
 The second (2D) problem describes the Alfv{\'e}n waves with  $\mathbf{Q}(x,0)=\left(1,v_1,v_2,v_3,\cos\left(\frac{\pi}{4}\right)+\kappa v_1,\sin\left(\frac{\pi}{4}\right)+\kappa v_2,\kappa v_3,0.1\right)^\top$,  
where $v_1=0.9\sin(2\pi \theta)\sin\left(\frac{\pi}{4}\right)$, $v_2=0.9\sin(2\pi \theta)\cos\left(\frac{\pi}{4}\right)$, $v_3=0.9\cos(2\pi \theta)$, and $\theta=x\cos\left(\frac{\pi}{4}\right)+y\sin\left(\frac{\pi}{4}\right)$. The computational domain  $[0,\sqrt{2}]^2$ is divided into $100\times100$ uniform rectangular cells. 
The stop time $T_{\tt stop}$ 
is set to 1 for the 1D problem and 0.1 for the 2D problem. Periodic boundary conditions and the $\gamma$-law EOS with $\gamma=5/3$ are employed. 
We compare the performance of four robust primitive variable solvers: the PCP NR, NH-FP-Aitken, MM-1DNR, and PL-Brent methods. These are incorporated into the 1D $\mathbb{P}^2$-based PCP DG scheme from \cite{wu2017admissible} and the 2D $\mathbb{P}^2$-based PCP DG scheme from \cite{wu2021provably}. Table \ref{table:smooth pb simulation} shows the primitive variables' recovery time and the total simulation time. It is observed that among these four solvers, the PCP NR method exhibits the highest efficiency.

	\begin{table}[!tbh]
		
		\centering
		\captionsetup{font=small}
		\caption{The CPU time in recovering primitive variables and the whole simulation CPU time in seconds for simulating two smooth RMHD problems.}\label{table:smooth pb simulation}
		
		\begingroup
		\setlength{\tabcolsep}{6pt} 
		\renewcommand{\arraystretch}{1.2} 
		
		\centering
		
		\small
			%
			%
			%
			%
			%
		
		\begin{tabular}{cccccc}
			\bottomrule[1.0pt]	
			& & PCP NR & NH-FP-Aitken & PL-Brent & MM-1DNR  \\ \hline
			
			\multirow{2}*{1D, $T_{\tt stop}=1$}	
			
			&Total time & 132.54s & 165.39s & 217.61s  & 134.33s \\
			
			&Recovery time & 26.46s & 59.32s & 111.57s & 28.13s \\ \hline
			
			\multirow{2}*{2D, $T_{\tt stop}=0.1$}	
			
			&total time & 153.09s & 213.63s & 275.42s & 157.85s  \\
			
			&recovery time & 39.91s & 99.16s & 161.24s & 44.77s
			
			\\
			
			
			\toprule[1.0pt]
		\end{tabular}
		
		\endgroup
		
	\end{table}

\subsection{Robustness tests in simulating RMHD with shocks}\label{sec:DGexamples}

To further demonstrate the robustness of the integrated PCP NR-DG schemes, we test two ultra-relativistic problems involving strong shocks: a blast problem with $\gamma=4/3$ and a jet with $\gamma=5/3$. 

	\subsubsection{2D blast problem}
This numerical example simulates the evolution of challenging two-dimensional (2D) circular blast waves in a strong magnetic field.   
Our setup follows \cite{BALSARA2016357,wu2021provably}, but our magnetic field $B_a$ is much stronger than that in  \cite{BALSARA2016357}. 
It is well-known that higher values of $B_a$ significantly increase the difficulty of simulating the RMHD  blast wave problem. 
In our study, we set $B_a=2000$, which corresponds to an extremely low plasma-beta of $2.5 \times 10^{-10}$. Note that the classic blast problem \cite{BALSARA2016357} is much milder and typically takes $B_a=1$, which corresponds to a plasma-beta of $0.1$. For strongly magnetized blast problems, many existing numerical schemes in the literature require artificial treatments; however, our integrated PCP NR-DG scheme demonstrates robust performance in this challenging test. Figure \ref{fig:BL} presents our numerical results obtained by $\mathbb{P}^2$-based PCP NR-DG scheme, which are in alignment with those reported in \cite{wu2021provably}. 
Table \ref{table:Robustness in practical simulation} compares the performance of seven different primitive variable solvers integrated into the DG schemes described in \cite{wu2021provably}. 
It indicates that the three 2D NR solvers (NG-2DNR, GR-2DNR, and CF-2DNR) and NH-FP-Aitken encounter failures in this test, whereas the other three solvers successfully recover the primitive variables throughout the simulations. The failure of NH-FP-Aitken method is caused by the Aitken acceleration 
at $t\approx 1.098$. In this example, serious NR oscillations result in a slow convergence rate for the MM-1DNR algorithm, while the PCP NR algorithm avoids this issue due to the utilization of \eqref{fu:express2} as discussed in Appendix \ref{sec:detail}. Among the three successfully executed solvers, the proposed PCP NR method is the fastest. 
		
		\begin{figure}[!t]
			\centering
				\includegraphics[width=0.32\textwidth]{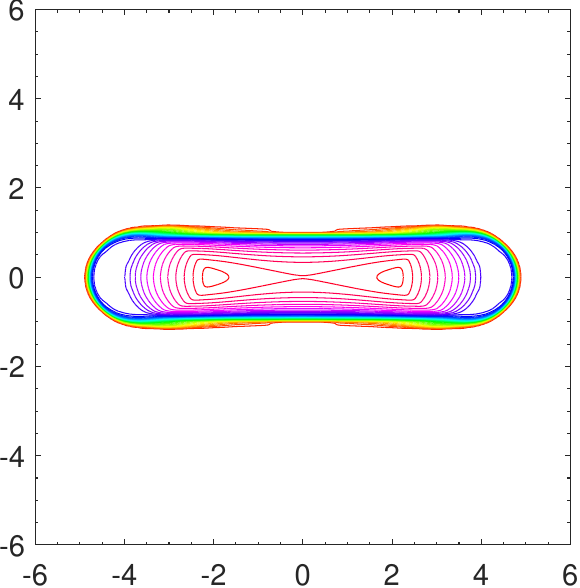} \hfill 
				\includegraphics[width=0.32\textwidth]{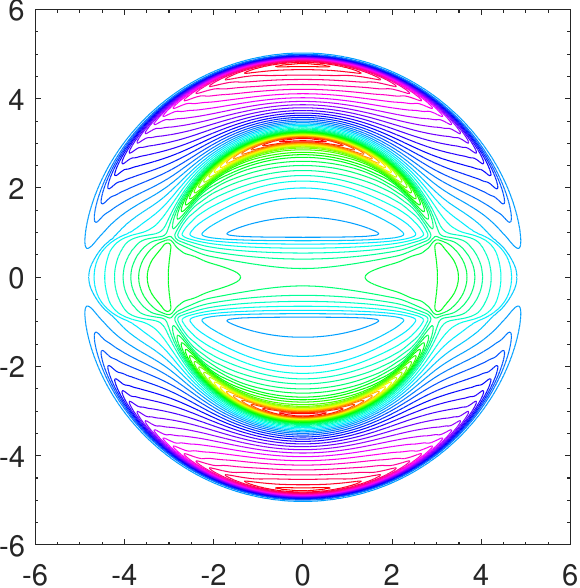} 
				\hfill 
				\includegraphics[width=0.32\textwidth]{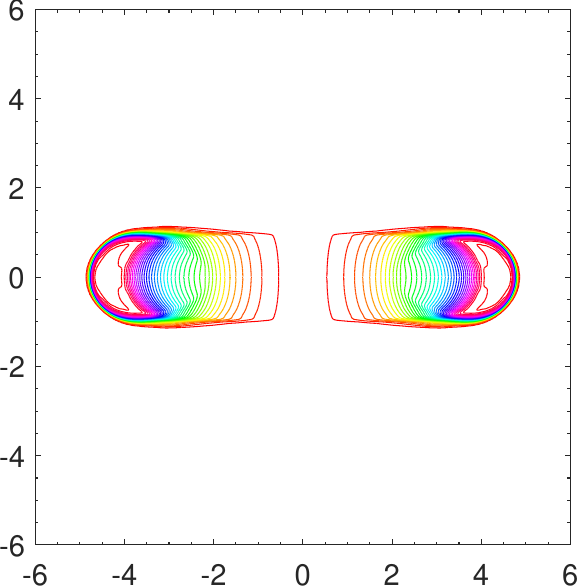}
			\captionsetup{font=small}
			\caption{Contour plots of $\log p$ (left), $|{\bf B}|$ (middle), and $W$ (right) for the blast problem at $t = 4$ obtained by the integrated PCP NR-DG scheme with $400\times400$ rectangular cells.}
			\label{fig:BL}
			\vspace{5mm}
		\end{figure}
	
		\begin{table}[!h]
		
		\centering
		\captionsetup{font=small}

		\caption{Performance of different primitive variable solvers in DG schemes: Execution status, total
			simulation time, and primitive variables recovery time for simulating ultra-relativistic 1D 
			Riemann problems, 2D blast problem, and astrophysical jet. }\label{table:Robustness in practical simulation}
		
		\begingroup
		\setlength{\tabcolsep}{1pt} 
		\renewcommand{\arraystretch}{1.3} 
		
		\centering
		
		\footnotesize
		
		\begin{tabular}{ccccccccc}
			\bottomrule[1.0pt]
			
			&  & PCP NR & NH-FP-Aitken & PL-Brent & MM-1DNR & NG-2DNR & GR-2DNR & CF-2DNR   \\ \hline
%
%
			
			\hline
			
			\multirow{3}{*}{Blast} & Status & Success & Failure & Success & Success & Failure & Failure & Failure   \\
			& Total  & 35h1min & -- & 48h6min & 55h47min & -- & -- & --  \\
			& Recovery  & 2h43min & -- & 16h3min & 23h41min & -- & -- & --  \\
			
			\hline
			
			\multirow{3}{*}{Jet} & Status & Success & Success & Success & Success & Failure & Failure & Failure   \\
			& Total & 106h13min & {112h} & 116h11min & 106h29min & -- & -- & --  \\
			& Recovery  & 12h47min & {19h3min} & 21h47min & 12h48min & -- & -- & --  \\ \toprule[1.0pt]
		\end{tabular}
		
		\endgroup
		
		\vspace{-5 mm}
		
	\end{table}
		
		\subsubsection{Astrophysical jet}
		
In our latest example, we further demonstrate the robustness of our integrated PCP NR-DG scheme through numerical simulations of a challenging relativistic jet originating in astrophysics. 
 While several studies \cite{2015High,WuTang2017ApJS,chen2022physical} have previously simulated relativistic jets without a magnetic field, our focus is on an RMHD jet from \cite{wu2021provably} with $B=\sqrt{2000p}$. Our setup is the same as \cite{wu2021provably} and thus omitted here. 
The computational region is taken as the half domain $[0,10]\times[0,25]$ with $240\times500$ uniform rectangular cells. 
We employ the third-order DG scheme as described in \cite{wu2021provably}, in conjunction with seven distinct solvers for the recovery of primitive variables. 
The comparative performance of these solvers is detailed in Table \ref{table:Robustness in practical simulation}. It reveals that while the three 2D NR solvers experience difficulties and fail during the simulations, the remaining four solvers demonstrate success in accurately recovering the primitive variables throughout the simulation process. Notably, among these four effective solvers, our proposed PCP NR method stands out as the most efficient.
Figure \ref{fig:jet} displays the numerical results at $t=30$ computed using our integrated PCP NR-DG scheme. The results align closely with those in \cite{wu2021provably}, underscoring the efficacy of our approach in simulating this ultra-relativistic, strongly magnetized problem.

		\begin{figure}[!t]
			\centering
			\vspace{-5mm}
			\subfloat[$\log p$]{
				\includegraphics[width=0.3\textwidth]{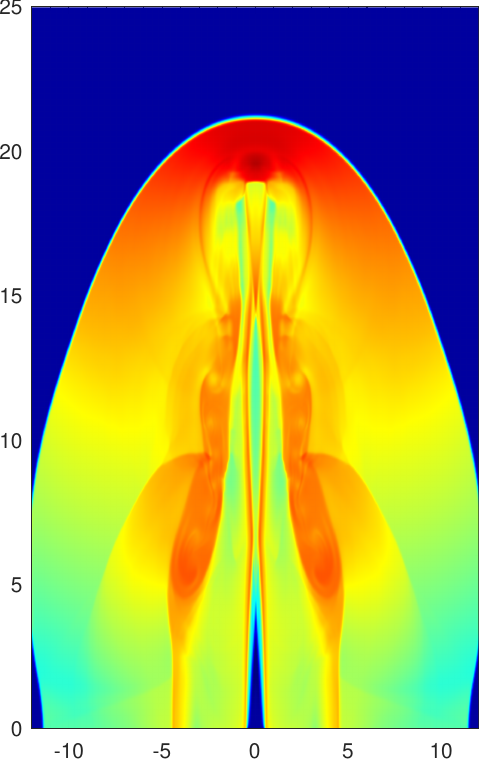}\label{fig:Jet_10}
			}  
			\subfloat[$\log \rho$]{
				\includegraphics[width=0.3\textwidth]{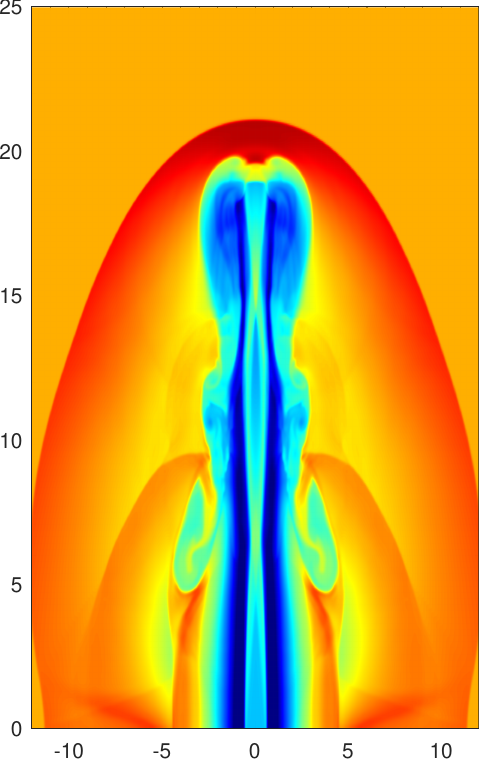}\label{fig:Jet_20}
			}  
			\subfloat[$|{\bf B}|$]{
				\includegraphics[width=0.3\textwidth]{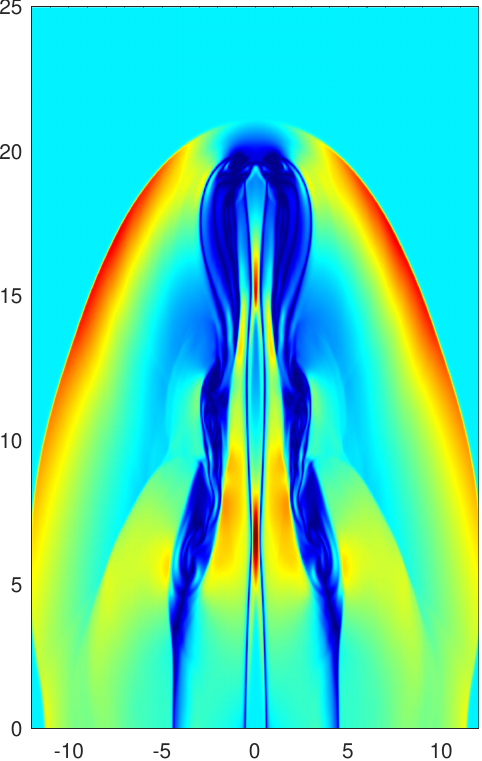}\label{fig:Jet_30}
			}
			\captionsetup{font=small}
			\caption{Astrophysical jet simulated using our integrated PCP NR-DG scheme.}
			\label{fig:jet}
		\end{figure}

In all four numerical examples simulated in Section \ref{sec:DGexamples}, we employed the values from the previous time step as initial guesses for the GR-2DNR and NG-2DNR methods. This strategy is widely used in the literature \cite{gammie2003harm,cerda2008new,noble2006primitive,siegel2018recovery}. However, its effectiveness is unpredictable and challenging to evaluate \cite{kastaun2021robust}, as the initial guess is not chosen deterministically. While this strategy may provide a good initial estimate when the solution is smooth and the time step size is sufficiently small, it can lead to significant discrepancies from the exact  root if the time step is not adequately small or if the solution exhibits discontinuities. 
For the CF-2DNR method, we also used the values from the previous time step as initial guesses; if this approach leads to divergence or failure, we then use the initial guess suggested in \cite{cerda2008new}. 
We observed that the DG schemes combined with either the GR-2DNR or NG-2DNR method failed to progress beyond the first time step in all four tested cases. Additionally, the DG scheme with the CF-2DNR method could not simulate the blast problem beyond $t=0.009$ and quickly failed within the first time step in the jet simulation. 

Considering these results, alongside those from Section \ref{sec:randomtestgamma}, it is clear that the PCP NR method stands out in terms of efficiency and robustness when compared to other primitive variable solvers tested. Furthermore, our numerical experiments indicate that the NH-FP method without the Aitken acceleration  \cite{newman2014primitive} exhibited notable robustness and might be both PCP and convergent. Additionally, the PL-Brent method \cite{palenzuela2015effects} and the MM-1DNR method \cite{mignone2007equation} showed no instances of failure. However, the convergence of the NH-FP and MM-1DNR methods has not yet been proven in theory.

\section{Conclusions}\label{sec:conclusion}

This paper has presented an important advancement in relativistic magnetohydrodynamics (RMHD), tackling a fundamental and long-standing challenge faced by all conservative schemes: the robust and efficient recovery of primitive variables from conservative ones. Our effort has led to the physical-constraint-preserving (PCP) and provably convergent Newton--Raphson (NR) method with a proven quadratic convergence rate. 
The core innovation is a unified approach for the initial guess in the NR method, meticulously designed through sophisticated theoretical analysis. This approach ensures provable convergence and rigorous adherence to physical constraints throughout the NR iterative process, even in ultra-relativistic and strongly magnetized scenarios. 
We have established mathematical theories to analyze the convergence and stability of the PCP NR method. 
The key finding is a crucial inequality pivotal to our analysis of the iterative function's convexity and concavity. 
Our theories have delineated a ``safe" interval for the initial guess, within which the unique positive root of a cubic polynomial always lies. By deriving a real analytical formula for this root and combining it with a more cost-effective initial value, we have obtained an efficient and robust initial guess. 
The broad applicability of the PCP NR method allows its integration into any conservative RMHD numerical scheme. 
It has been successfully incorporated into a PCP discontinuous Galerkin (DG) framework, yielding fully PCP NR-DG schemes that preserve the physicality of both the computed conservative and primitive variables for RMHD. 
The efficiency and robustness of the PCP NR method have been validated through various numerical experiments, including random tests and challenging ultra-relativistic simulations, demonstrating its superiority over other existing solvers. 
Empirical tests reveal that the PCP NR method achieves near-machine accuracy within an average of just five iterations. 
Our findings and insights can inform and enhance the development of other PCP convergent solvers and integrated fully PCP schemes, potentially bringing a broad impact to RMHD.

\appendix

\section{Computational details}\label{sec:detail} 
The pseudocode of the PCP NR method is provided in Algorithm \ref{algor:2}. 
Note that utilizing the formulation \eqref{fu:express2} to implement the function $\mathcal W(\xi)$ is important for large-scale problems with high Lorentz factors or strong magnetic fields. Although the formulations \eqref{eq:DefWxi} and \eqref{fu:express2} for $\mathcal W(\xi)$ are theoretically equivalent, their numerical performance can vary notably due to round-off errors. Figure \ref{fig:fuexpress} demonstrates this by comparing the numerical profiles of ${\mathcal F}(\xi)$ using the two different formulations for $\mathcal W(\xi)$. The profile from \eqref{eq:DefWxi} shows substantial oscillations that could lead to NR iteration instabilities. In contrast, the profile obtained using \eqref{fu:express2} maintains the smoothness and monotonicity of ${\mathcal F}(\xi)$, enhancing stability and reliability.

\renewcommand\baselinestretch{0.86}

\begin{algorithm}[htbp]
	\caption{PCP NR method computing $\xi_*=\rho h W^2$ from a given ${\bf U} \in {\mathcal G}$.}\label{algor:2}
	{\small 
		\begin{algorithmic}

			\State  Compute $m$, $B$, $\tau$, $\beta_1$, $\beta_2$, $\alpha_1$, $\alpha_2$, and $\eta$ by  \eqref{key331}--\eqref{key331b};   Set $\gamma_0=\frac{\gamma-1}{\gamma}$; Compute $\xi_d$ by \eqref{eq:xi_d}.
			
			
			\State $\xi_0\leftarrow\xi_d$; ~ $\mathcal W\leftarrow 1/\sqrt{\frac{(\xi_0 + \alpha_2)(\eta + m)}{\eta^2}  + \beta_1\left(\frac{1}{\eta^2} - \frac{1}{\xi_0^2}\right)}$;\Comment{\fontsize{7.5pt}{0pt}Set $\xi_0=\xi_d$ and calculate $\mathcal W(\xi_d)$}
			
			\State $f_1\leftarrow \xi_0-\gamma_0\left(\frac{\xi_0}{\mathcal W^2}-\frac{D}{\mathcal W}\right)-\frac12\left(\frac{B^2}{\mathcal W^2}+\frac{\tau^2}{\xi_0^2}\right)+\alpha_1$; \Comment{\fontsize{7.5pt}{0pt}Calculate ${\mathcal F}(\xi_d)$}
			
			\State flag$\leftarrow$True; \Comment{\fontsize{7.5pt}{0pt}The flag checks if ${\mathcal F}(\xi_d) \le 0$}
			
			\If{$f_1>0$} \Comment{\fontsize{7.5pt}{0pt}If ${\mathcal F}(\xi_d) >0$, set  $\xi_0=\xi_c$}
			\State $a_0\leftarrow-0.5(B^2D^2+\tau^2)$; $\delta\leftarrow27a_0+4\alpha_1^3$; 
			\If {$\delta>0$}
			\State $\theta\leftarrow\arccos\left(1+\frac{13.5a_0}{\alpha_1^3}\right)$; ~~
			$\xi_0\leftarrow-\frac{\alpha_1}{3}\left(1-2\cos\left(\frac{\theta}{3}-\frac{\pi}{3}\right)\right)$;
			\Else 
			\State $X_1\leftarrow \alpha_1^3+13.5a_0$; ~~ $X_2\leftarrow 1.5\sqrt{3a_0\delta}$; ~~ 
			$\xi_0\leftarrow -\left(\alpha_1+\sqrt[3]{X_1+X_2}+\sqrt[3]{X_1-X_2}\right)/3$;
			\EndIf
			\State flag$\leftarrow$False; 
			\EndIf
			
			\State $N_{\tt osc} \leftarrow 0$; $\xi_1\leftarrow \xi_0$; $f_0\leftarrow0$;
			
			\Do\Comment{\fontsize{7.5pt}{0pt}NR iteration loop}
			\State $\eta\leftarrow \xi_1+B^2$; $\varphi_a\leftarrow -\left(\frac{\beta_1}{\xi_1^3}+\frac{\beta_2}{\eta^3}\right)$; 
			
			\If{flag}		
			\State flag$\leftarrow$False \Comment{\fontsize{7.5pt}{0pt}If $\xi_0=\xi_d$, then $f_1={\mathcal F}(\xi_d)$ has already been computed}
			\Else
			\State $\mathcal W\leftarrow 1/\sqrt{\frac{(\xi_1 + \alpha_2)(\eta + m)}{\eta^2}  + \beta_1\left(\frac{1}{\eta^2} - \frac{1}{\xi_1^2}\right)}$; 
			$f_1\leftarrow \xi_1-\gamma_0\left(\frac{\xi_1}{\mathcal W^2}-\frac{D}{\mathcal W}\right)-\frac12\left(\frac{B^2}{\mathcal W^2}+\frac{\tau^2}{\xi_1^2}\right)+\alpha_1$; 
			\EndIf
			
			\State $df\leftarrow 1+B^2\varphi_a+\frac{\tau^2}{\xi_1^3}-\gamma_0\left(\frac{1}{\mathcal W^2}-2\xi_1\varphi_a+D\mathcal W\varphi_a\right)$; ~~ 
			$\xi_0\leftarrow \xi_1$; ~~ $\xi_1\leftarrow \xi_1-\frac{f_1}{df}$; 
			
			\If{$f_0f_1<0$}
			\State $N_{\tt osc} \leftarrow N_{\tt osc}+1$; \Comment{\fontsize{7.5pt}{0pt}$N_{\tt osc}$ is used to count the NR oscillations (cf.~\cite{flocke2015algorithm})}
			\EndIf
			
			\State $f_0\leftarrow  f_1$;
			\doWhile{$|\xi_0-\xi_1|>\epsilon_{\tt target}$ $\& $ $N_{\tt osc} \leq3$} \Comment{$\epsilon_{\tt target}$ is the given target accuracy}
			
			\Ensure{$\xi_1$ as an approximation to $\xi_*$.}
		\end{algorithmic}
	}
\end{algorithm}

\vspace{-5mm}

\begin{figure}[!htb]
	\centering
		\includegraphics[width=0.39\textwidth]{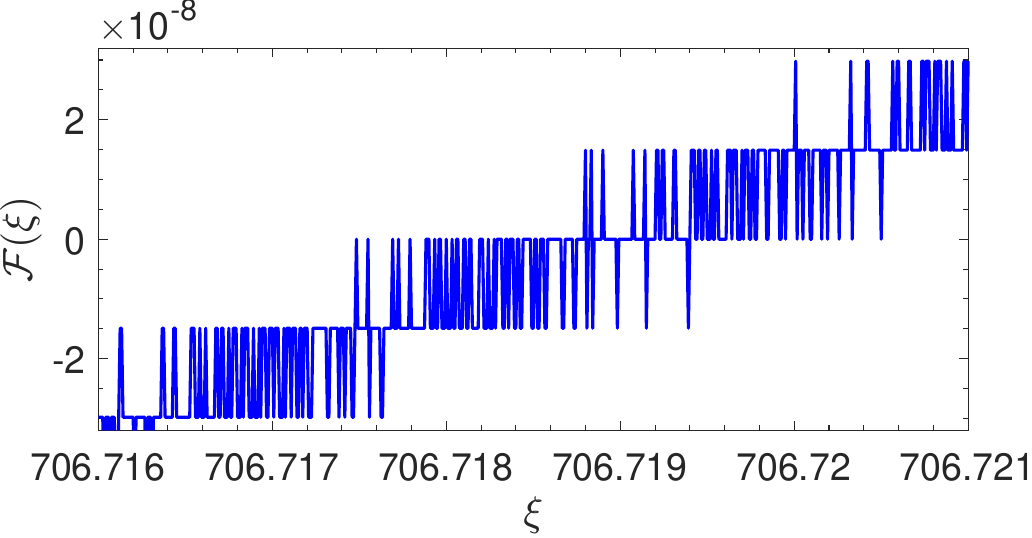}\label{fig:fuexpress1}
~~~~~~~
		\includegraphics[width=0.39\textwidth]{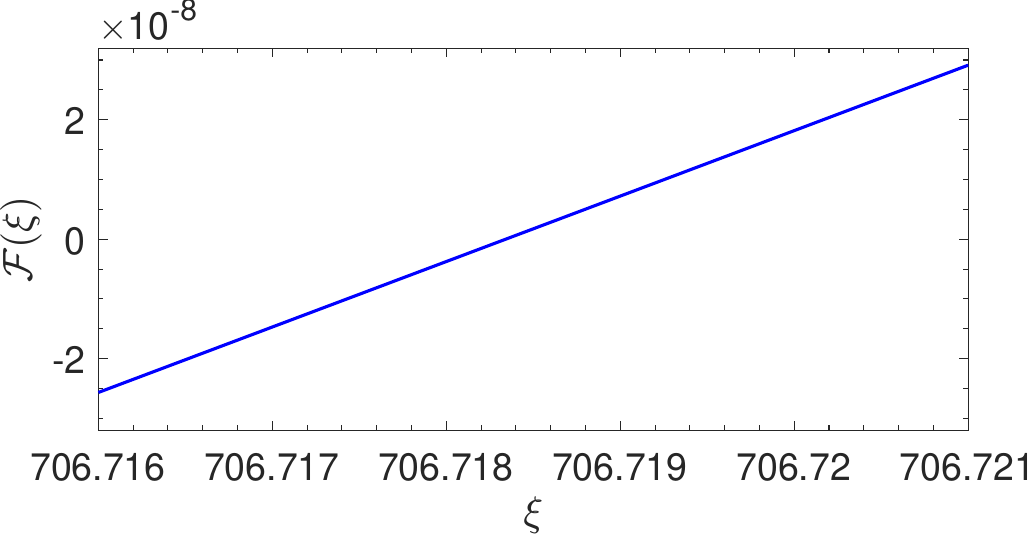}\label{fig:fuexpress2}
	\caption{\small The profiles of 
		${\mathcal F}(\xi)$ with $\mathcal W(\xi)$ computed by \eqref{eq:DefWxi} (left) and \eqref{fu:express2} (right), respectively. Here we set $\gamma=5/3$, $D=1$, $E=10^{8}$, $m^2=9.999999999\times 10^{15}$, $B=10^4$, and $\tau =1$. 
	}\label{fig:fuexpress}
\end{figure}


\begin{changemargin}{-0.2cm}{-0.2cm}  %
	
	\renewcommand\baselinestretch{0.86}
\bibliography{ref}
\bibliographystyle{siamplain}

\end{changemargin}

\newpage

\end{document}